\newtheorem{theorem}{Theorem}[section]
\newtheorem{lemma}[theorem]{Lemma}
\theoremstyle{definition}
\newtheorem{definition}[theorem]{Definition}
\theoremstyle{remark}
\newtheorem{remark}[theorem]{Remark}
\numberwithin{equation}{section}
\begin{document}
	
	\title{The $L_{p}$ Gaussian  Minkowski  problem for $C$-pseudo-cones }
	
	\author{Junjie Shan$^{*}$, Wenchuan Hu }
	
	\address{School of Mathematics, Sichuan University, Chengdu, Sichuan, 610064, P. R. China}

	\email{shanjjmath@163.com,   wenchuan@scu.edu.cn
	}

	\thanks{{\it 2010 Math Subject Classifications}:  52A40, 52A38}
	
	\thanks{{\it Keywords}:  $C$-pseudo-cone, Gaussian measure,  $L_{p}$ Minkowski problem}

	\thanks{ $^{*}$ Corresponding author}
	\begin{abstract}
	
	The $L_{p}$  Gaussian Minkowski problem for $C$-pseudo-cones is studied in this paper, and the  existence and uniqueness  results are established. This extends our previous work on the Minkowski problem  for $C$-pseudo-cones with respect to the Gaussian surface area measure ($p=1$) and the Gaussian cone measure ($p=0$).
	\end{abstract}
	
	\maketitle

	\section{Introduction}
For convex bodies,	the Minkowski problem  aims to establish necessary and sufficient conditions for a Borel measure on the unit sphere to be the surface area measure.  The $L_p$ surface area measure is a generalization of the classical surface area measure. The $L_p$ Minkowski problem, which characterizes the $L_p$ surface area measure, was first introduced by Lutwak \cite{Lutwak1993,Lutwak1996}, extending the classical Brunn-Minkowski theory into a broader framework. When $p = 1$, the $L_p$ Minkowski problem reduces to the classical Minkowski problem. Consequently, many classical results in convex geometry become part of the more general $L_p$ theory, such as the affine isoperimetric inequality \cite{CGlpine,HFGlpine,lyz2002}, the Minkowski problem \cite{LYZ2013JAMS,HLYZ2018,LLL2022,lyz2004,lyz2018,zhu2015Lp}. Notably, for $p \geq 1$, many forms of the $L_p$ Minkowski problem have been resolved. However, significant open questions remain for the case $p < 1$.
	
	A special class of unbounded closed convex sets, known as $C$-pseudo-cones, has recently attracted considerable research interest. Within the copolarity framework, $C$-pseudo-cones are the counterparts of convex bodies with the origin in their interiors \cite{ASWdual,Rcopolar,schneider pescone,xulileng 2023}. Let $C \subset \mathbb{R}^n$ be a pointed (i.e., containing no lines), $n$-dimensional closed convex cone. A \textit{pseudo-cone} is defined as a nonempty closed convex set $K$ that does not contain the origin and satisfies $\lambda K \subseteq K$ for all $\lambda \geq 1$. The \textit{recession cone} of $K$, denoted by $\operatorname{rec} K$, is given by
	\[
	\operatorname{rec} K = \{ z \in \mathbb{R}^n : K + z \subseteq K \}.
	\]
	A \textit{$C$-pseudo-cone} is a pseudo-cone $K$ whose recession cone satisfies $\operatorname{rec} K = C$. As $x \in \mathbb{R}^n$ tends to infinity, the shape of the $C$-pseudo-cone $K$ is strongly governed by the cone $C$.
	
	The \textit{polar cone} of $C$, denoted by $C^\circ$, is defined as
	\[
	C^\circ = \{ x \in \mathbb{R}^n : \langle x, y \rangle \leq 0 \text{ for all } y \in C \},
	\]
	we will write
	$$
	 \Omega_{C^\circ} = S^{n-1} \cap \operatorname{int} C^\circ.
	$$
	
	Unlike convex bodies, the surface area measure of a $C$-pseudo-cone may be infinite. The Minkowski problem for the surface area measure and cone volume measure of $C$-pseudo-cones was first studied by Schneider \cite{schneider 2018,schneider2021}. To generate finite measures, Schneider \cite{schneider weighted,schneider weighted cone} introduced the weighted Minkowski problem, where the weighted functions are homogeneous. Furthermore, Yang, Ye, and Zhu \cite{yangyezhuLp2022} later explored the $L_p$-Brunn-Minkowski theory for $C$-coconvex sets, while Li, Ye, and Zhu \cite{Lyzdualcone} developed dual versions. Further developments in the theory of pseudo-cones can be found in \cite{ai lpdual,sz2024asyp,Wang2024asymptotic,zhang asyp}.
	
	In \cite{shan2025}, the first  author, the second  author, and Xu introduced a class of non-homogeneous measures---the Gaussian surface area measure and the Gaussian cone measure for $C$-pseudo-cones, and posed the corresponding  Gaussian Minkowski problem.  The Gaussian surface area measure of  a  $C$-pseudo-cone $K$ in $\mathbb{R}^{n}$  is defined on $\Omega_{C^{\circ}}$ as
	\begin{equation}\label{Gmeasure}
		S_{\gamma^{n}}(K,\eta)=\frac{1}{(\sqrt{2 \pi})^{n}} \int_{\nu_{K}^{-1}(\eta)} e^{-\frac{|x|^{2}}{2}} d \mathcal{H}^{n-1}(x)
	\end{equation}
	for every Borel set $\eta\subset\Omega_{C^{\circ}}$, where  $\nu_{K}$ is the outer unit normal vector  of $K$. The measure $	S_{\gamma^{n}}(K,\cdot)$ is obviously finite.

	Naturally, for any real number $p$, the $L_p$ Gaussian surface area measure, as the $L_p$ generalization of \eqref{Gmeasure}, can be defined as
	\begin{equation}
		S_{p,\gamma^{n}}(K,\eta)=\frac{1}{(\sqrt{2 \pi})^{n}} \int_{\nu_{K}^{-1}(\eta)}|\langle x,\nu_{K}(x) \rangle|^{1-p} e^{-\frac{|x|^{2}}{2}} d \mathcal{H}^{n-1}(x).
	\end{equation}
	In particular, the cases $p=1$ and $p=0$ correspond to the Gaussian surface area measure and the Gaussian cone measure, respectively, as introduced in our previous work \cite{shan2025}. 
	
	It is worth noting that, in the case of convex bodies, the corresponding Gaussian Minkowski problem was studied in \cite{Huang2021}, while its $L_p$ version was provided in \cite{Liu2022}. Additional results on the Gaussian Minkowski problem for convex bodies can be found in  \cite{zhao2023, FHX2023,FLX2023,KLweightedprojection,Liv2019}.
	
	In this paper, we consider the (normalized) $L_{p}$ Gaussian Minkowski problem for $C$-pseudo-cones. For the existence theorem, we discuss two cases: $p > 0$ and $p < 0$. When $p > 0$, the result corresponds to a generalization of the case $p = 1$ in \cite{shan2025}, but when $p < 0$, the situation is completely different. For $p > 0$, the corresponding result is
	\begin{theorem}
		If $p>0$, 
		for any nonzero finite Borel measure  $\mu$ on $\Omega_{C^{\circ}}$,  there exists a $C$-pseudo-cone $K$ such that
		$$\mu=\frac{c}{p}S_{p,\gamma^{n}}(K,\cdot),$$
		where $c=\frac{\int_{\Omega_{C^{\circ}}}\bar{h}_{K}^{p}d\mu}{\gamma^{n}(K)}$.
		
	\end{theorem}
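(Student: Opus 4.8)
I would prove this by the variational method. The plan is to introduce the functional
$$\mathcal{F}(K)=\gamma^{n}(K)\int_{\Omega_{C^{\circ}}}\bar h_{K}^{p}\,d\mu$$
on the class of all $C$-pseudo-cones and to show it attains a positive maximum. The point of this particular combination is the following: by the $L_{p}$ Gaussian variational formula for $C$-pseudo-cones (the $L_{p}$ analogue of the $p=1$ formula from our previous work, obtained by a Wulff-shape deformation: for $K_{t}$ with $\bar h_{K_{t}}^{p}=\bar h_{K}^{p}+tg$ one has $\tfrac{d}{dt}\big|_{0}\gamma^{n}(K_{t})=-\tfrac1p\int_{\Omega_{C^{\circ}}}g\,dS_{p,\gamma^{n}}(K,\cdot)$, while $\tfrac{d}{dt}\big|_{0}\int_{\Omega_{C^{\circ}}}\bar h_{K_{t}}^{p}\,d\mu=\int_{\Omega_{C^{\circ}}}g\,d\mu$), the first-order optimality condition at a maximizer $K$ of $\mathcal{F}$ reads
$$\gamma^{n}(K)\int_{\Omega_{C^{\circ}}}g\,d\mu=\tfrac1p\Big(\int_{\Omega_{C^{\circ}}}\bar h_{K}^{p}\,d\mu\Big)\int_{\Omega_{C^{\circ}}}g\,dS_{p,\gamma^{n}}(K,\cdot)\qquad\text{for all }g\in C_{c}(\Omega_{C^{\circ}}),$$
which is exactly $\mu=\tfrac{c}{p}S_{p,\gamma^{n}}(K,\cdot)$ with $c=\big(\int_{\Omega_{C^{\circ}}}\bar h_{K}^{p}\,d\mu\big)/\gamma^{n}(K)$. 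So everything reduces to producing such a maximizer and checking it is a genuine $C$-pseudo-cone.

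To bound $\mathcal{F}$ from above, I would use that for any $C$-pseudo-cone $K$ and any $u\in\Omega_{C^{\circ}}$ the inclusion $K\subseteq\{x:\langle x,u\rangle\le-\bar h_{K}(u)\}$ gives $\gamma^{n}(K)\le\Phi_{1}(-\bar h_{K}(u))$, where $\Phi_{1}$ is the one-dimensional standard Gaussian distribution function; with the tail bound $\Phi_{1}(-t)\le(t\sqrt{2\pi})^{-1}e^{-t^{2}/2}$ this yields a constant $M_{p}$ (depending only on $p$) with $\gamma^{n}(K)\,\bar h_{K}(u)^{p}\le M_{p}$ for all such $K,u$, hence $\mathcal{F}(K)\le M_{p}\,\mu(\Omega_{C^{\circ}})$. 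Since $\mu\neq0$, $\sup\mathcal{F}>0$. Now take a maximizing sequence $(K_{j})$. If $d(o,K_{j})\to\infty$ then, since every $u\in\Omega_{C^{\circ}}$ satisfies $\langle x,u\rangle\le-c_{u}|x|$ on $C$ for some $c_{u}>0$, we get $\bar h_{K_{j}}(u)\ge c_{u}\,d(o,K_{j})\to\infty$ for each $u$, so $\gamma^{n}(K_{j})\bar h_{K_{j}}(u)^{p}\le(2\pi)^{-1/2}\bar h_{K_{j}}(u)^{p-1}e^{-\bar h_{K_{j}}(u)^{2}/2}\to0$ pointwise and boundedly, whence $\mathcal{F}(K_{j})\to0$ — impossible. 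So $d(o,K_{j})$ is bounded; writing $K_{j}\supseteq y_{j}+C$ with $y_{j}$ the point of $K_{j}$ nearest $o$, we get $\gamma^{n}(K_{j})\ge\gamma^{n}(y_{j}+C)\ge\delta_{1}>0$, hence $\bar h_{K_{j}}^{p}\le M_{p}/\delta_{1}$ uniformly on $\Omega_{C^{\circ}}$. Thus $(K_{j})$ lies in a compact family of $C$-pseudo-cones, and a Blaschke-type selection theorem gives a subsequential limit $K$; along it $\gamma^{n}(K_{j})\to\gamma^{n}(K)$ and, by dominated convergence and the uniform bound, $\int\bar h_{K_{j}}^{p}\,d\mu\to\int\bar h_{K}^{p}\,d\mu$, so $\mathcal{F}(K)=\sup\mathcal{F}>0$. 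This forces $K\neq C$, hence $\bar h_{K}>0$ on $\Omega_{C^{\circ}}$, and $o\notin K$ (otherwise $K\supseteq o+C=C$), so $K$ is a genuine $C$-pseudo-cone with $\gamma^{n}(K)>0$, $\int\bar h_{K}^{p}\,d\mu>0$, and $c\in(0,\infty)$.

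Finally I would extract the Euler--Lagrange equation. Fix $g\in C_{c}(\Omega_{C^{\circ}})$; since $\bar h_{K}$ is bounded away from $0$ on $\operatorname{supp}g$, the function $\bar h_{K}^{p}+tg$ is nonnegative (positive on $\operatorname{supp}g$) for $|t|$ small, so its Wulff shape $K_{t}$ is a $C$-pseudo-cone. Differentiating $\mathcal{F}(K_{t})$ at $t=0$, using maximality and the variational formula, gives the displayed identity; and since $\mu$ and $S_{p,\gamma^{n}}(K,\cdot)$ are finite Borel measures on $\Omega_{C^{\circ}}$, testing against all $g\in C_{c}(\Omega_{C^{\circ}})$ upgrades it to the equality of measures $\mu=\tfrac{c}{p}S_{p,\gamma^{n}}(K,\cdot)$.

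I expect the main obstacle to be the non-degeneracy/compactness step: the family of $C$-pseudo-cones is not compact, so along a maximizing sequence one must rule out escape to infinity, then invoke a selection theorem and verify that the limit still has recession cone exactly $C$ and does not contain $o$ — these two facts are what make the limit a genuine $C$-pseudo-cone with $\bar h_{K}>0$ and $c$ finite and positive. A secondary technical point, relevant when $0<p<1$, is that the density $|\langle x,\nu_{K}\rangle|^{1-p}$ in $S_{p,\gamma^{n}}$ degenerates where $\bar h_{K}$ vanishes near $\partial\Omega_{C^{\circ}}$; restricting the test functions to $C_{c}(\Omega_{C^{\circ}})$, on whose support $\bar h_{K}$ is bounded away from $0$, keeps the first-variation computation legitimate and still suffices to identify the two measures.
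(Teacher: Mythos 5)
Your functional $\mathcal{F}(K)=\gamma^{n}(K)\int\bar h_{K}^{p}\,d\mu$ is exactly the paper's $I_{\mu}$, and the first-variation identity you want is the paper's Lemma 3.7; but trying to run the maximization in one shot over \emph{all} $C$-pseudo-cones creates a genuine gap at the Euler--Lagrange step. The variational formula you invoke is only available for Wulff-shape deformations over a \emph{compact} set $\omega\subset\Omega_{C^{\circ}}$ and for bodies $K\in\mathcal{K}(C,\omega)$ (Schneider's radial-derivative lemma, and hence Lemmas 3.6--3.7 of the paper, are proved in that setting). If you perturb your global maximizer $K$ by the Wulff shape of $\bar h_{K}^{p}+tg$ over $\omega=\operatorname{supp}g$, then at $t=0$ you do not recover $K$ but $K^{(\omega)}=C\cap\bigcap_{u\in\omega}H_{K}^{-}(u)\supseteq K$, which generally differs from $K$; since $\gamma^{n}(K^{(\omega)})\ge\gamma^{n}(K)$ while $\int_{\Omega\setminus\omega}\bar h_{K^{(\omega)}}^{p}d\mu\le\int_{\Omega\setminus\omega}\bar h_{K}^{p}d\mu$, you cannot compare $\mathcal{F}(K^{(\omega)})$ with $\mathcal{F}(K)$, and maximality of $\mathcal{F}$ at $K$ does not force the $t$-derivative to vanish. (A maximizer over the full class need not be $C$-determined by any compact set, so you cannot assume $K=K^{(\omega)}$.) Extending the Wulff deformation to all of $\Omega_{C^{\circ}}$ instead would require a version of the radial-derivative and dominated-convergence arguments on a non-compact index set, which you have not supplied. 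This is precisely why the paper proceeds in two stages: Lemma 4.2 solves the problem for $\mu\llcorner\omega$ inside $\mathcal{K}(C,\omega)$, where the inequality $I_{\mu}(f)\le I_{\mu}(\bar h_{[f]})$ forces the maximizer to be $C$-determined by $\omega$ and the variational formula is legitimate; Lemma 4.3 then gets uniform bounds for the solutions $K_i$ on an exhausting sequence $\omega_i$ from the monotonicity $I_{\mu_i}(\bar h_{K_i})\le I_{\mu_{i+1}}(\bar h_{K_{i+1}})$; and Theorem 4.4 passes to the limit via weak convergence of $S_{p,\gamma^{n}}(K_i,\cdot)\llcorner\omega_j$.

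Two smaller points. First, your compactness step only rules out $d(o,K_{j})\to\infty$; the selection lemma also needs a positive lower bound $a<\operatorname{dist}(o,K_{j})$, which you should get (as the paper does) from $\bar h_{K_{j}}(u)\le d(o,K_{j})$, whence $d(o,K_{j})\to 0$ would force $\mathcal{F}(K_{j})\le\mu(\Omega_{C^{\circ}})\,d(o,K_{j})^{p}\to0$. Second, your uniform bound $\gamma^{n}(K)\bar h_{K}(u)^{p}\le M_{p}$ via $t^{p-1}e^{-t^{2}/2}$ fails near $t=0$ when $0<p<1$; it needs to be patched with the trivial bound $\gamma^{n}(K)\bar h_{K}(u)^{p}\le\bar h_{K}(u)^{p}$ for small $\bar h_{K}(u)$. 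Both are easily repaired; the Euler--Lagrange issue above is the substantive one.
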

	Here $\gamma^n(K)$ denotes the $n$-dimensional Gaussian measure of $K$ and 	 $\bar{h}_K$ denotes the absolute support function of $K$.
For the case $p<0$, the existence of solutions to the (normalized) $L_{p}$ Gaussian Minkowski problem can be stated as follows.
	
	\begin{theorem}
		If $p<0$, 
		for any nonzero finite Borel measure  $\mu$ on $\Omega_{C^{\circ}}$,  there exists a $C$-pseudo-cone $K$ such that
		$$\mu=-\frac{c}{p}S_{p,\gamma^{n}}(K,\cdot),$$
		where $c=\frac{\int_{\Omega_{C^{\circ}}}\bar{h}_{K}^{p}d\mu}{\gamma^{n}(K)}$.
		
	\end{theorem}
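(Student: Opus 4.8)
The plan is to prove Theorem~1.2 by a variational argument, taking as energy the functional
$$\Phi_p(K)=\log\!\Big(\int_{\Omega_{C^{\circ}}}\bar h_K^{\,p}\,d\mu\Big)-\log\gamma^{n}(K),$$
defined on the family of all $C$-pseudo-cones $K$ (with the convention $\Phi_p(K)=+\infty$ when $\int\bar h_K^{\,p}\,d\mu=+\infty$), and minimizing it. First one checks $\inf\Phi_p<+\infty$: for $v\in\operatorname{int}C$ the translate $K_0=C+v$ is a $C$-pseudo-cone with $\bar h_{K_0}(u)=-\langle v,u\rangle$, which is continuous and strictly positive on the compact set $S^{n-1}\cap C^{\circ}$ (strict positivity even on $\partial C^{\circ}$ uses $v\in\operatorname{int}C$), hence bounded below by some $\varepsilon_0>0$; since $\mu$ is finite this gives $\int\bar h_{K_0}^{\,p}\,d\mu\le\varepsilon_0^{\,p}\,\mu(\Omega_{C^{\circ}})<\infty$, so $\Phi_p(K_0)<\infty$.

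The heart of the matter is the a priori control of a minimizing sequence $\{K_j\}$. The elementary estimate $\bar h_K(u)=-\sup_{x\in K}\langle x,u\rangle\le d(0,K)$ (pair $u$ with the point of $K$ nearest the origin) yields, since $p<0$, the lower bound $\int_{\Omega_{C^{\circ}}}\bar h_K^{\,p}\,d\mu\ge d(0,K)^{\,p}\,\mu(\Omega_{C^{\circ}})$. On the other hand $K\subseteq\mathbb R^{n}\setminus B_{d(0,K)}(0)$, so $\gamma^{n}(K)\le\gamma^{n}(C)$ always and $\gamma^{n}(K)\le C_n\,d(0,K)^{\,n-2}e^{-d(0,K)^{2}/2}$ once $d(0,K)$ is large. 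Hence, if $d(0,K_j)\to0$ the first term of $\Phi_p$ tends to $+\infty$ while the second is bounded below by $-\log\gamma^{n}(C)$; and if $d(0,K_j)\to\infty$ the bound $-\log\gamma^{n}(K_j)\ge\tfrac12 d(0,K_j)^{2}+O(\log d(0,K_j))$ dominates $\log\int\bar h_{K_j}^{\,p}\,d\mu\ge p\log d(0,K_j)+O(1)$. In either case $\Phi_p(K_j)\to+\infty$, impossible for a minimizing sequence, so $d(0,K_j)$ stays in a compact subinterval of $(0,\infty)$; by the selection theorem for $C$-pseudo-cones a subsequence converges to a $C$-pseudo-cone $K^{\ast}$. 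Since $\gamma^{n}$ is continuous under this convergence and $t\mapsto t^{p}$ is continuous and positive on $(0,\infty)$, Fatou's lemma makes $\Phi_p$ lower semicontinuous, so $K^{\ast}$ attains $\inf\Phi_p$; in particular $\int\bar h_{K^{\ast}}^{\,p}\,d\mu<\infty$.

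The last step is the Euler--Lagrange equation. Given $\psi\in C_c(\Omega_{C^{\circ}})$, for $|t|$ small the function $\bar h_{K^{\ast}}^{\,p}+t\psi$ is positive on $\operatorname{supp}\psi$ (where $\bar h_{K^{\ast}}$ is bounded below) and hence determines a $C$-pseudo-cone $K_t$; one has $\tfrac{d}{dt}\big|_{0}\int\bar h_{K_t}^{\,p}\,d\mu=\int_{\Omega_{C^{\circ}}}\psi\,d\mu$, and, by the $L_p$-analogue of the variational formula for the Gaussian measure of $C$-pseudo-cones from \cite{shan2025},
$$\frac{d}{dt}\Big|_{0}\gamma^{n}(K_t)=-\frac1p\int_{\Omega_{C^{\circ}}}\psi\,dS_{p,\gamma^{n}}(K^{\ast},\cdot).$$
Minimality of $K^{\ast}$ forces $\tfrac{d}{dt}\big|_{0}\Phi_p(K_t)=0$, i.e.
$$\frac{\displaystyle\int_{\Omega_{C^{\circ}}}\psi\,d\mu}{\displaystyle\int_{\Omega_{C^{\circ}}}\bar h_{K^{\ast}}^{\,p}\,d\mu}+\frac1p\cdot\frac{\displaystyle\int_{\Omega_{C^{\circ}}}\psi\,dS_{p,\gamma^{n}}(K^{\ast},\cdot)}{\gamma^{n}(K^{\ast})}=0$$
for every such $\psi$; as these $\psi$ separate measures on $\Omega_{C^{\circ}}$, this gives $\mu=-\tfrac1p\big(\int_{\Omega_{C^{\circ}}}\bar h_{K^{\ast}}^{\,p}\,d\mu\big)\gamma^{n}(K^{\ast})^{-1}S_{p,\gamma^{n}}(K^{\ast},\cdot)=-\tfrac cp\,S_{p,\gamma^{n}}(K^{\ast},\cdot)$ with $c=\big(\int_{\Omega_{C^{\circ}}}\bar h_{K^{\ast}}^{\,p}\,d\mu\big)/\gamma^{n}(K^{\ast})$, as required (note $-c/p>0$ since $p<0$, consistent with $\mu$ and $S_{p,\gamma^{n}}(K^{\ast},\cdot)$ being positive measures).

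I expect the coercivity step to be the main obstacle: one must rule out that the minimizing sequence either degenerates to the cone $C$ (its absolute support function collapsing to $0$) or escapes to infinity, and it is precisely the sign $p<0$ that makes the collapse $\bar h_{K_j}\to0$ force $\int\bar h_{K_j}^{\,p}\,d\mu\to+\infty$ — the mechanism absent when $p>0$, which is why Theorem~1.1 requires a different treatment. A secondary technical point is the Aleksandrov-type justification of the variational formula above along the $L_p$-combination $\bar h_{K^{\ast}}^{\,p}+t\psi$, namely that the derivatives may be computed as if $\bar h_{K_t}^{\,p}=\bar h_{K^{\ast}}^{\,p}+t\psi$ even though $K_t$ is only the $C$-pseudo-cone generated by that function.
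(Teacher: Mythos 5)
Your functional is the paper's in disguise: minimizing $\Phi_p(K)=\log\int\bar{h}_K^{\,p}\,d\mu-\log\gamma^{n}(K)$ is exactly maximizing the paper's $\phi_\mu(f)=\gamma^{n}([f])/\int f^{p}d\mu$, and your coercivity mechanism is identical to the paper's (the bound $\bar{h}_K\le\operatorname{dist}(o,K)$ combined with $p<0$ to kill collapse, and Gaussian tail decay to kill escape to infinity; your tail bound $C_n r^{n-2}e^{-r^2/2}$ is even sharper than the paper's Lemma \ref{decayesti}). The Euler--Lagrange computation and the final identification of the constant $c$ also match. The genuine structural difference is that you run a single global minimization over \emph{all} $C$-pseudo-cones, whereas the paper first solves the problem on each compact $\omega_i\subset\Omega_{C^{\circ}}$ within the class $\mathcal{K}(C,\omega_i)$ (Lemma \ref{minkowski comp<0}), proves a uniform estimate for the resulting solutions $K_i$ (Lemma \ref{unip<0}, which needs the extra test function $1|_{\omega_i}$ and the auxiliary body $A$ to get a lower bound independent of $i$), and only then passes to the limit using weak convergence of $S_{p,\gamma^{n}}$ restricted to compacta.

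That two-step structure is not a stylistic choice, and collapsing it is where your argument has a real gap: the variational formula (Lemma \ref{vari formula}) and the underlying radial-derivative lemma for Wulff shapes are only available for $K\in\mathcal{K}(C,\omega)$ with $\omega$ compact and perturbations supported on that \emph{same} $\omega$. Your global minimizer $K^{\ast}$ need not be $C$-determined by any compact set, and the Wulff shape of $(\bar{h}_{K^{\ast}}^{\,p}+t\psi)^{1/p}$ over $\omega=\operatorname{supp}\psi$ equals $(K^{\ast})^{(\omega)}\neq K^{\ast}$ at $t=0$, so its derivative is not the derivative of $\Phi_p$ at $K^{\ast}$; taking the Wulff shape over all of $\Omega_{C^{\circ}}$ instead requires extending Schneider's differentiability lemma to non-compact index sets, which is not in the literature the paper relies on. You flag the Aleksandrov-type issue (that $\bar{h}_{K_t}^{\,p}\neq\bar{h}_{K^{\ast}}^{\,p}+t\psi$) as ``secondary'' --- that one is indeed routine, since $\bar{h}_{[h_t]}^{\,p}\le h_t^{\,p}$ for $p<0$ gives a differentiable upper barrier $\tilde\Phi(t)\ge\Phi_p(K^{\ast})$ touching at $t=0$ --- but the compactness restriction on the variational formula is the primary obstacle, and it is precisely what the paper's exhaustion argument is built to avoid. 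To close the gap you would either need to prove the variational formula for general $C$-pseudo-cones under compactly supported perturbations, or fall back on the paper's route of solving on $\mathcal{K}(C,\omega_i)$ and approximating.
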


	For the case $p=0$, the limiting case of $L_{p}$ theory, referred to as the Gaussian logarithmic Minkowski problem, has been  studied in \cite{shan2025}.

	The class $\mathcal{K}(C,\omega)$ consists of $C$-pseudo-cones whose construction is determined by the compact set $\omega\subset\Omega_{C^{\circ}}$. Specifically, $K\in\mathcal{K}(C,\omega)$ implies 
	\[
	K = C \cap \bigcap_{u\in\omega} \{ x \in\mathbb{R}^{n}: \langle x, u \rangle \leq h_{K}(u) \},
	\]
where $h_K$ denotes the  support function of $K$.	 Under volume constraints, elements in $\mathcal{K}(C,\omega)$ satisfy the following uniqueness property:
	
	\begin{theorem}
		For $p\in (0,1]$, 	let $\omega\subset\Omega_{C^{\circ}}$ be a nonempty compact set. If	 $K,L\in\mathcal{K}(C,\omega)$ have the same $L_{p}$ Gaussian surface area measure, i.e.,
		$$S_{p,\gamma^{n}}(K,\cdot)=S_{p,\gamma^{n}}(L,\cdot),$$
		and if $\gamma^{n}(K)=\gamma^{n}(L)$,	 then $K=L$.
	\end{theorem}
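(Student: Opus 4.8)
The plan is to argue by contradiction: assume $K\neq L$, join $K$ to $L$ by an $L_{p}$-Wulff combination inside $\mathcal{K}(C,\omega)$, and play the first-order information coming from $S_{p,\gamma^{n}}(K,\cdot)=S_{p,\gamma^{n}}(L,\cdot)$ against a Gaussian Brunn--Minkowski type concavity. The volume constraint $\gamma^{n}(K)=\gamma^{n}(L)$ is precisely what converts this tension into a contradiction.

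Since $0\notin K,L$ and $\omega\subset\Omega_{C^{\circ}}$ one has $\bar{h}_{K},\bar{h}_{L}>0$ on $\omega$. For $t\in[0,1]$ set $\varphi_{t}=\bigl((1-t)\bar{h}_{K}^{\,p}+t\,\bar{h}_{L}^{\,p}\bigr)^{1/p}$ on $\omega$ and let
\[
K_{t}=C\cap\bigcap_{u\in\omega}\{x\in\mathbb{R}^{n}:\langle x,u\rangle\le-\varphi_{t}(u)\};
\]
this is a $C$-pseudo-cone (nonempty, with recession cone $C$), $K_{t}\in\mathcal{K}(C,\omega)$, and since $K,L\in\mathcal{K}(C,\omega)$ we have $K_{0}=K$ and $K_{1}=L$. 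Write $\Phi(t)=\gamma^{n}(K_{t})>0$. By the $L_{p}$ Gaussian variational formula for $C$-pseudo-cones (the first-variation identity underlying the existence theorems; note $S_{p,\gamma^{n}}(K,\cdot)$ is concentrated on $\omega$), $\Phi$ admits one-sided derivatives at the endpoints with
\[
\Phi'(0^{+})=-\tfrac{1}{p}\!\int_{\Omega_{C^{\circ}}}\!\bigl(\bar{h}_{L}^{\,p}-\bar{h}_{K}^{\,p}\bigr)\,dS_{p,\gamma^{n}}(K,\cdot),\qquad \Phi'(1^{-})=-\tfrac{1}{p}\!\int_{\Omega_{C^{\circ}}}\!\bigl(\bar{h}_{K}^{\,p}-\bar{h}_{L}^{\,p}\bigr)\,dS_{p,\gamma^{n}}(L,\cdot),
\]
the second formula obtained by running the interpolation from the $L$-end and reparametrising $t\mapsto1-t$. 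Substituting the hypothesis $S_{p,\gamma^{n}}(K,\cdot)=S_{p,\gamma^{n}}(L,\cdot)$ shows $\Phi'(0^{+})=\Phi'(1^{-})$.

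The decisive input is a concavity property of the Gaussian measure along these combinations: for $p\in(0,1]$ there is a fixed strictly increasing $\rho$ — one expects $\rho=\log$ to work, while $\rho=$ (the inverse of the one-dimensional Gaussian distribution function) gives an Ehrhard-type formulation — such that $t\mapsto\rho(\gamma^{n}(K_{t}))$ is concave on $[0,1]$, and \emph{strictly} concave because $K\neq L$ (which makes $t\mapsto\varphi_{t}(u)$ non-constant on a set carrying the measure). I expect this, and in particular its equality characterisation, to be the main obstacle. Two routes look viable: derive it from the log-concavity of the one-dimensional Gaussian tail together with the fact that for $p\le1$ the power-mean inequality $\bigl((1-t)a^{p}+tb^{p}\bigr)^{1/p}\le(1-t)a+tb$ makes each $\varphi_{t}(u)$ convex in $t$; or perform a direct second-variation estimate for $\gamma^{n}(K_{t})$, as in the convex-body treatments of the Gaussian $L_{p}$ Minkowski problem, where again $p\le1$ is exactly what fixes the sign of the resulting quadratic form. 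Confining everything to $\mathcal{K}(C,\omega)$ keeps all support functions bounded on $\omega$ and all Wulff shapes in one class, which is where the differentiations and the equality discussion are clean.

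Granting this, the proof closes at once. The constraint $\gamma^{n}(K)=\gamma^{n}(L)$ gives $\Phi(0)=\Phi(1)$, hence $\rho(\Phi(0))=\rho(\Phi(1))$, and a strictly concave function with equal endpoint values has strictly positive right derivative at $0$ and strictly negative left derivative at $1$. Since $\rho'>0$, this yields $\Phi'(0^{+})>0>\Phi'(1^{-})$, contradicting $\Phi'(0^{+})=\Phi'(1^{-})$ obtained above. Therefore $K=L$.
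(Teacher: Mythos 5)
Your overall strategy (interpolate $K$ and $L$ by the $L_p$-Wulff path $K_t=[\varphi_t]$ with $\varphi_t=((1-t)\bar h_K^p+t\bar h_L^p)^{1/p}$, compute the endpoint derivatives of $\Phi(t)=\gamma^n(K_t)$ via the variational formula, and play them against a concavity of the Gaussian measure) is the same circle of ideas the paper uses, but the step you yourself flag as the main obstacle is a genuine gap, and it is exactly the hard part. You need $t\mapsto\rho(\Phi(t))$ to be \emph{strictly} concave whenever $K\neq L$. Concavity of $\log\Phi$ does follow from the chain $[\varphi_{(1-s)t_0+st_1}]\supset[(1-s)\varphi_{t_0}+s\varphi_{t_1}]\supset(1-s)[\varphi_{t_0}]+s[\varphi_{t_1}]$ (using $p\le 1$) together with log-concavity of $\gamma^n$; but strictness is not automatic: the equality case of Gaussian log-concavity only bites when $[\varphi_{t_0}]\neq[\varphi_{t_1}]$, and $K\neq L$ gives $\varphi_{t_0}\neq\varphi_{t_1}$ on $\omega$, not a priori that the Wulff shapes differ (a constraint $\langle x,u\rangle\le-\varphi_t(u)$ need not be active). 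An Ehrhard-type version along $L_p$-Wulff combinations would be an even stronger, unproved statement. There is also a sign slip: running the interpolation from the $L$-end gives $\Phi'(1^-)=+\frac1p\int(\bar h_K^p-\bar h_L^p)\,dS_{p,\gamma^n}(L,\cdot)$, not the negative of that; with the correct sign your conclusion $\Phi'(0^+)=\Phi'(1^-)$ does hold under the hypothesis, but as written your two formulas would give $\Phi'(1^-)=-\Phi'(0^+)$.

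The paper closes the gap without ever needing concavity (let alone strict concavity) of $\Phi$ along the whole path. It proves a Minkowski-type inequality: from $[\varphi_t]\supset(1-t)K+tL$ and log-concavity one gets the single chord bound $\log\gamma^n([\varphi_t])\ge(1-t)\log\gamma^n(K)+t\log\gamma^n(L)$ with equality at $t=0$, and differentiating at $t=0^+$ via Lemma \ref{vari formula} yields $\frac{1}{p\gamma^n(K)}\int_\omega(\bar h_K^p-\bar h_L^p)\,dS_{p,\gamma^n}(K,\cdot)\ge\log\frac{\gamma^n(L)}{\gamma^n(K)}$. For the equality case it only needs the Minkowski segment: equality forces $k'(0)=k(1)-k(0)$ for the concave function $k(t)=\log\gamma^n((1-t)K+tL)$, hence $k$ is linear, hence $K=L$ by the equality case of log-concavity applied to $K$ and $L$ themselves. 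Uniqueness then follows by applying this inequality twice with $K$ and $L$ interchanged and forcing equality. If you want to salvage your version, the cleanest repair is to abandon strict concavity at interior points and instead run exactly this endpoint argument, where the equality discussion reduces to the classical equality case of the Gaussian log-Brunn--Minkowski inequality for the Minkowski (not $L_p$) combination.
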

	
	For certain directional hyperplanes $H_{t}$, the Gaussian surface area density of $C\cap H_t$ tends to zero as $t\to 0^{+}$ and $t\to +\infty$, where $t$ denotes the distance from the origin to the hyperplane. This asymptotic behavior implies that for any cone $C$ and any compact set $\omega\subset\Omega_{C^{\circ}}$, one can always construct examples in $\mathcal{K}(C,\omega)$ such that the solution to the $L_{p}$ Gaussian Minkowski problem loses uniqueness:
	\begin{theorem}
	If $p<n$,	for any pointed, $n$-dimensional closed convex cone $C$ in $\mathbb{R}^{n}$ and any nonempty compact set $\omega\subset\Omega_{C^{\circ}}$, 	there exist distinct sets $K, L \in \mathcal{K}(C, \omega)$ such that their $L_p$ Gaussian surface area measures coincide, i.e., $S_{p,\gamma^{n}}(K,\cdot)=S_{p,\gamma^{n}}(L,\cdot)$, but $K\neq L$.
	\end{theorem}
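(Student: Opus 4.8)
The plan is to exhibit $K$ and $L$ as two distinct single-hyperplane truncations of the cone $C$, whose $L_{p}$ Gaussian surface area measures turn out to be atoms supported at one common direction with equal mass; non-injectivity of that mass as a function of the truncation level is what produces the failure of uniqueness.

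First I would fix a point $u_0\in\omega$ and, for $t>0$, set $H_t=\{x\in\mathbb{R}^{n}:\langle x,u_0\rangle=-t\}$ and $K_t=C\cap\{x\in\mathbb{R}^{n}:\langle x,u_0\rangle\le -t\}$. Since $u_0\in\operatorname{int}C^{\circ}$ there is $\delta>0$ with $\langle x,u_0\rangle\le-\delta|x|$ on $C$, which makes $C\cap H_t$ a compact $(n-1)$-dimensional convex body with $C\cap H_t=t\,(C\cap H_1)$ by homogeneity of $C$, and makes $K_t$ a $C$-pseudo-cone (one has $0\notin K_t$ and $\operatorname{rec}K_t=C$). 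One checks $K_t\in\mathcal{K}(C,\omega)$ at once: $h_{K_t}(u_0)=-t$ while every constraint $\langle\cdot,u\rangle\le h_{K_t}(u)$, $u\in\omega$, holds on $K_t$ by definition of the support function, so
$K_t=C\cap\{x:\langle x,u_0\rangle\le h_{K_t}(u_0)\}\supseteq C\cap\bigcap_{u\in\omega}\{x:\langle x,u\rangle\le h_{K_t}(u)\}\supseteq K_t$. Also $t\mapsto K_t$ is injective.

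Next I would compute $S_{p,\gamma^{n}}(K_t,\cdot)$ on $\Omega_{C^{\circ}}$. Up to an $\mathcal{H}^{n-1}$-null set, $\partial K_t$ is the union of the flat face $C\cap H_t$ (along whose relative interior the outer normal is $u_0$) and a portion of $\partial C$; but at any $x\in\partial C\setminus\{0\}$ the normal cone equals $C^{\circ}\cap x^{\perp}\subseteq\partial C^{\circ}$, which is disjoint from $\Omega_{C^{\circ}}$. Hence $S_{p,\gamma^{n}}(K_t,\cdot)=f(t)\,\delta_{u_0}$ on $\Omega_{C^{\circ}}$, where, using $|\langle x,u_0\rangle|=t$ on $H_t$ and the substitution $x=ty$,
\[
f(t)=\frac{t^{1-p}}{(\sqrt{2\pi})^{n}}\int_{C\cap H_t}e^{-|x|^{2}/2}\,d\mathcal{H}^{n-1}(x)=\frac{t^{n-p}}{(\sqrt{2\pi})^{n}}\,\phi(t),\qquad \phi(t):=\int_{C\cap H_1}e^{-t^{2}|y|^{2}/2}\,d\mathcal{H}^{n-1}(y).
\]
Dominated convergence shows $\phi$ is continuous and positive on $[0,\infty)$ with $\phi(0)=\mathcal{H}^{n-1}(C\cap H_1)\in(0,\infty)$, and since $|y|\ge 1$ on $H_1$ one gets $\phi(t)\le e^{-t^{2}/2}\phi(0)\to 0$. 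Therefore $f$ is continuous and positive on $(0,\infty)$ with $f(t)\to 0$ as $t\to+\infty$ and — this is the single place where the hypothesis enters — $f(t)\to 0$ as $t\to 0^{+}$ exactly because $n-p>0$. Consequently $f$ attains a strictly positive maximum at an interior point, so by the intermediate value theorem there exist $t_1\ne t_2$ in $(0,\infty)$ with $f(t_1)=f(t_2)$; taking $K=K_{t_1}$ and $L=K_{t_2}$ then gives distinct members of $\mathcal{K}(C,\omega)$ with $S_{p,\gamma^{n}}(K,\cdot)=f(t_1)\delta_{u_0}=f(t_2)\delta_{u_0}=S_{p,\gamma^{n}}(L,\cdot)$.

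The step I expect to be the real content is the small-$t$ behaviour of the atom mass $f(t)$: one must be certain it decays exactly like $t^{n-p}$ (no hidden logarithmic factor or cancellation), which is precisely what the scaling identity $C\cap H_t=t\,(C\cap H_1)$ delivers, and which also explains why $p<n$ is the natural — and sharp — threshold (for $p\ge n$ the mass does not tend to $0$ at the apex). The remaining ingredients, namely that the $\partial C$-part of $\nu_{K_t}^{-1}(\cdot)$ never meets $\Omega_{C^{\circ}}$ and that all the Gaussian integrals are finite and continuous in $t$, are routine.
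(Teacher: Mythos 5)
Your proposal is correct and follows essentially the same route as the paper: truncate $C$ by a single hyperplane orthogonal to a fixed direction of $\omega$, observe that the $L_p$ Gaussian surface area measure is an atom of mass proportional to $t^{1-p}s(t)$, show this mass tends to $0$ as $t\to 0^{+}$ (using $p<n$) and as $t\to\infty$, and conclude by the intermediate value theorem. The only (cosmetic) difference is that you derive the small-$t$ decay from the exact scaling identity $C\cap H_t=t(C\cap H_1)$, whereas the paper gets the same $O(t^{n-p})$ bound by enclosing the cross-section in a ball.
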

	
	\section{Preliminaries}\label{section2}
	Let $\left( \mathbb{R}^n, \langle \cdot, \cdot \rangle\right) $ denote the $n$-dimensional Euclidean space  with the standard inner product. A subset $C \subset \mathbb{R}^n$ is called a \textit{cone} if it satisfies $\lambda x \in C$ for all $x \in C$ and $\lambda \geq 0$. A closed convex set $K \subset \mathbb{R}^n$ is termed a \textit{pseudo-cone} if it satisfies $\lambda x \in K$ for all $x \in K$ and $\lambda \geq 1$. The recession cone of $K$, characterizing its asymptotic behavior, is defined as $\operatorname{rec} K = \{ z \in \mathbb{R}^n : K + z \subseteq K \}$. For a pointed closed convex cone $C \subset \mathbb{R}^n$, a \textit{$C$-pseudo-cone} refers to a pseudo-cone with $\operatorname{rec} K = C$. The dual cone is given by $C^\circ = \{ x \in \mathbb{R}^n : \langle x, y \rangle \leq 0 \ \forall y \in C \}$. Throughout this paper, we assume that any cone $C$ is pointed (i.e., it contains no lines) and $n$-dimensional.
	
	Let $S^{n-1}$ represent the unit sphere and $B$ the open unit ball.
	For $u \in S^{n-1}$ and $t \in \mathbb{R}$, we define $H(u, t) = \{ x \in \mathbb{R}^n : \langle x, u \rangle = t \}$ and its associated halfspace 
	$$H^-(u, t) = \{ x \in \mathbb{R}^n : \langle x, u \rangle \leq t \},\quad H^{+}(u, t) = \{ x \in \mathbb{R}^n : \langle x, u \rangle \geq t \}.$$ 
	Fixing a unit vector $\mathfrak{v} \in \operatorname{int} C \cap \operatorname{int}(-C^\circ)$, we partition $C$ via $$C^-(t) = C \cap H^-(\mathfrak{v}, t), C^+(t) = C \cap H^+(\mathfrak{v}, t), \quad\text{and}\quad C(t) = C \cap H(\mathfrak{v}, t)$$ for $t > 0$. Correspondingly, any $C$-pseudo-cone $K$ is decomposed into $K(t) = K \cap C(t)$, $K^-(t) = K \cap C^-(t)$, and $K^+(t) = K \cap C^+(t)$.

For a $C$-pseudo-cone $K$, the \textit{support function} $h_K: C^\circ \to \mathbb{R}$, defined as $$h_K(x) = \sup \{ \langle x, y \rangle : y \in K \},$$ and it is bounded and non-positive. Its absolute support function is given by $$\bar{h}_K = -h_K.$$ 
The radial function $\varrho_K: \Omega_C \to \mathbb{R}$, where $	\Omega_C = S^{n-1} \cap \operatorname{int} C,$ given by $$\varrho_K(v) = \min \{ \lambda \in \mathbb{R} : \lambda v \in K \},$$ generates the radial map $r_K = \varrho_K(v)v$ and radial Gauss map $\alpha_K = \nu_K \circ r_K$, where $\nu_K$ denotes the outer unit normal  on $\partial K$.

	Convergence of $C$-pseudo-cones follows Schneider \cite{schneider pescone}: 
	\begin{definition}\label{conv def}
	A sequence $K_i$ converges to $K$ if there exists $t_0 > 0$ such that $K_i^-(t_0) \neq \emptyset$ for all $i$ and $K_i^-(t) \to K^-(t)$ in Hausdorff metric for $t \geq t_0$. 
	\end{definition}
	
	The following selection theorem of $C$-pseudo-cones is essential: 
	\begin{lemma}\label{select}
	For a sequence $K_i$ with uniform bounds $a < \operatorname{dist}(o, K_i) < b$, there exists a convergent subsequence $K_{i_j} \to K$. 
	\end{lemma}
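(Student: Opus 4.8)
The plan is to reduce the statement to the classical Blaschke selection theorem applied to the compact ``caps'' $K_i^-(t)$, to extract a limit through these caps by a diagonal argument, and then to verify that the resulting set is a genuine $C$-pseudo-cone. First I would record three facts that make the caps well behaved. Since $\mathfrak v\in\operatorname{int}(-C^\circ)$ we have $\langle x,\mathfrak v\rangle\ge 0$ for every $x\in C$, strictly for $x\neq o$, so a standard normalization argument shows each slab $C^-(t)=C\cap H^-(\mathfrak v,t)$ is a compact convex set increasing to $C$ as $t\to\infty$. Next, every $C$-pseudo-cone satisfies $K_i\subseteq\operatorname{rec}K_i=C$ (because $x\in K_i$ forces $(1+s)x\in K_i$ for all $s\ge0$, so $x$ is a recession direction), whence $K_i^-(t)=K_i\cap C^-(t)\subseteq C^-(t)$ is compact convex and uniformly bounded in $i$. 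Finally, the bound $\operatorname{dist}(o,K_i)<b$ yields a point $x_i\in K_i$ with $|x_i|<b$, and since $\langle x_i,\mathfrak v\rangle\le|x_i|<b$ this point lies in $K_i^-(t)$ for every $t\ge t_0:=b$, so the caps are nonempty for $t\ge t_0$; the bound $\operatorname{dist}(o,K_i)>a$ gives $K_i\cap aB=\emptyset$, a property I will carry to the limit to guarantee $o\notin K$.

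With these bounds in hand I would fix a countable dense set $\{t_m\}\subset[t_0,\infty)$ containing $t_0$ and run Blaschke selection level by level: for each $m$ the sequence $\{K_i^-(t_m)\}_i$ lives in the fixed compact set $C^-(t_m)$, hence has a Hausdorff-convergent subsequence. A diagonal argument then yields a single subsequence, still denoted $K_{i_j}$, with $K_{i_j}^-(t_m)\to L_m$ for every $m$, each $L_m$ a nonempty compact convex set disjoint from $aB$. Monotonicity of the caps gives $L_m\subseteq L_{m'}$ for $t_m\le t_{m'}$, and a short Kuratowski argument—points of $K_{i_j}^-(t_{m'})$ whose $\mathfrak v$-coordinate is strictly below $t_m$ already lie in $K_{i_j}^-(t_m)$, the boundary case being recovered by convexity—gives the consistency relation $L_{m'}\cap C^-(t_m)=L_m$. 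Consequently $K:=\operatorname{cl}\bigcup_m L_m$ is a well-defined closed convex set with $K\cap C^-(t_m)=L_m$ for each $m$.

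Verifying that $K$ is a $C$-pseudo-cone is the crux. It is nonempty (it contains $L_0\neq\emptyset$), closed and convex by construction, and $\operatorname{dist}(o,K)\ge a$ because each $L_m\cap aB=\emptyset$, so $o\notin K$. The inclusion $K\subseteq C$ follows from $L_m\subseteq C^-(t_m)\subseteq C$, and this forces $\operatorname{rec}K\subseteq C$, since $z\in\operatorname{rec}K$ gives $x_0+\lambda z\in K\subseteq C$ for a fixed $x_0\in K$, and $(x_0+\lambda z)/\lambda\to z$ with $C$ a closed cone. For the reverse inclusion I would prove $K+C=K$: given $x\in L_m$ written as $x=\lim_j x_j$ with $x_j\in K_{i_j}^-(t_m)\subseteq K_{i_j}$, and given $z\in C$, the identity $K_{i_j}+C=K_{i_j}$ (equivalent to $\operatorname{rec}K_{i_j}=C$) gives $x_j+z\in K_{i_j}$, while $\langle x_j+z,\mathfrak v\rangle\le t_m+\langle z,\mathfrak v\rangle$ places $x_j+z$ in $K_{i_j}^-(t_{m'})$ for any $t_{m'}\ge t_m+\langle z,\mathfrak v\rangle$; passing to the limit yields $x+z\in L_{m'}\subseteq K$. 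Hence $C\subseteq\operatorname{rec}K$, so $\operatorname{rec}K=C$, and since $K\subseteq C$ the relation $\lambda x=x+(\lambda-1)x\in K+C=K$ for $\lambda\ge1$ shows $K$ is a pseudo-cone. Therefore $K$ is a $C$-pseudo-cone.

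It remains to upgrade the convergence from the discrete levels to the definition, namely $K_{i_j}^-(t)\to K^-(t)$ in the Hausdorff metric for all $t\ge t_0$. Because all sets involved lie in the fixed compact $C^-(t)$, Hausdorff convergence is equivalent to the coincidence of the upper and lower Kuratowski limits, and I would check these directly: any limit of points $x_j\in K_{i_j}^-(t)$ lies in $K\cap C^-(t)=K^-(t)$, using $K_{i_j}^-(t)\subseteq K_{i_j}^-(t_m)$ for a level $t_m\ge t$; while any $x\in K^-(t)$ with $\langle x,\mathfrak v\rangle<t$ is approximated by points of $K_{i_j}^-(t_m)\subseteq K_{i_j}^-(t)$ for a level $\langle x,\mathfrak v\rangle\le t_m\le t$, which exists by density of $\{t_m\}$. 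I expect the genuine obstacles to be twofold: the verification that the limit's recession cone is exactly $C$ rather than a proper subcone (settled by the $K+C=K$ identity together with the upper bound $b$, which prevents the caps from escaping to infinity), and the treatment of boundary points $x$ with $\langle x,\mathfrak v\rangle=t$ in the lower-limit step, which I would resolve by approximating $x$ from within the slice by convexity and diagonalizing. This completes the selection.
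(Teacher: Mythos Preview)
The paper does not prove this lemma; it is recorded without proof as a standing tool (the convergence notion and this selection principle are taken over from Schneider's work on pseudo-cones), so there is no in-paper argument to compare against. Your proposal is the natural route and is correct in outline: Blaschke selection on the uniformly bounded compact caps $K_i^-(t)\subseteq C^-(t)$, a diagonal extraction over a countable dense set of heights, reassembly of the limit $K$ from the consistent family $\{L_m\}$, the identity $K+C=K$ (inherited from the approximants) to pin down $\operatorname{rec}K=C$, and a Kuratowski-limit check to upgrade convergence from the dense heights to all $t\ge t_0$.

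Two minor repairs. First, your parenthetical for $K_i\subseteq\operatorname{rec}K_i$ is incomplete: showing $(1+s)x\in K_i$ does not yet give $y+x\in K_i$ for arbitrary $y\in K_i$; the clean one-liner is $x+y=2\cdot\tfrac12(x+y)\in K_i$ by convexity together with the pseudo-cone property, whence $K_i+K_i\subseteq K_i$ and so $K_i\subseteq\operatorname{rec}K_i=C$. Second, take $t_0$ strictly larger than $b$ rather than $t_0=b$: the witnesses $x_i$ to $\operatorname{dist}(o,K_i)<b$ satisfy $\langle x_i,\mathfrak v\rangle\le|x_i|<b<t_0$, so after passing to the limit $L_0$ contains a point of $\mathfrak v$-height at most $b<t_0\le t$. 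This supplies, for every $t\ge t_0$, an interior point (in the $\mathfrak v$-direction) of $K^-(t)$, which is exactly what you need to run the convex-segment approximation that handles the boundary case $\langle x,\mathfrak v\rangle=t$ in the lower Kuratowski limit and in the consistency relation $L_{m'}\cap C^-(t_m)=L_m$. With these adjustments your argument goes through.
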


	An integral transformation formula for $C$-pseudo-cones states that \cite{schneider weighted}: 
	
	\begin{lemma}\label{integral trans}
		Let $K$ be a $C$-pseudo-cone, for any nonnegative and Borel measurable or $\mathcal{H}^{n-1}$-integrable function $F: \partial_{i}K\to \mathbb{R}$:
		\begin{align*}
			\int_{\partial_{i} K} F(y) d\mathcal{H}^{n-1}(y) & =\int_{\Omega_{C}} F\left(r_{K}(v)\right) \frac{\varrho_{K}^{n}(v)}{\bar{h}_{K}\left(\alpha_{K}(v)\right)} d v \\
			& =\int_{\Omega_{C}} F\left(r_{K}(v)\right) \frac{\varrho_{K}^{n-1}(v)}{\left|\left\langle v, \alpha_{K}(v)\right\rangle\right|} d v,
		\end{align*}
		where $\partial_{i} K:=\partial K\cap \operatorname{int} C$.
	\end{lemma}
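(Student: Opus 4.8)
The plan is to recognize both identities as a single change of variables under the radial map $r_K \colon \Omega_C \to \partial_i K$, $r_K(v) = \varrho_K(v)\,v$, and then to pass between the two displayed right-hand sides by an elementary support-function identity. First I would record that, since $K$ is a $C$-pseudo-cone, every direction $v \in \Omega_C = S^{n-1}\cap\operatorname{int}C$ meets $\partial K$ in exactly one point, so $r_K$ is a bijection onto $\partial_i K$; and that $\varrho_K$, being the radial function of a convex set, is locally Lipschitz on the open set $\Omega_C$ and hence (Rademacher) differentiable $\mathcal H^{n-1}$-a.e. there. Because $\partial K$ is a convex hypersurface it is differentiable $\mathcal H^{n-1}$-a.e., so the outer normal $\nu_K$, and thus $\alpha_K = \nu_K \circ r_K$, is defined almost everywhere. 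I would exhaust $\Omega_C$ by compact sets (equivalently, work with the bounded pieces $K^-(t)$) so that $r_K$ is Lipschitz on each piece and the area formula of geometric measure theory applies.

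The core computation is the Jacobian of $r_K$. Fixing a point $v$ of differentiability and an orthonormal basis $e_1,\dots,e_{n-1}$ of $T_v S^{n-1} = v^\perp$, one has $d(r_K)_v(e_i) = \varrho_K(v)\,e_i + (\partial_{e_i}\varrho_K)\,v$, so the first fundamental form is $G_{ij} = \varrho_K^2\,\delta_{ij} + (\partial_{e_i}\varrho_K)(\partial_{e_j}\varrho_K)$, giving $\sqrt{\det G} = \varrho_K^{\,n-2}\sqrt{\varrho_K^2 + |\nabla\varrho_K|^2}$, where $\nabla\varrho_K$ is the spherical gradient. Writing the unit normal at $r_K(v)$ as $\nu = a\,v + \sum_i b_i e_i$ and imposing orthogonality to each $d(r_K)_v(e_i)$ yields $|a| = |\langle v,\alpha_K(v)\rangle| = \varrho_K / \sqrt{\varrho_K^2 + |\nabla\varrho_K|^2}$, whence the $(n-1)$-Jacobian equals
\[
J r_K(v) = \sqrt{\det G} = \frac{\varrho_K^{\,n-1}(v)}{|\langle v,\alpha_K(v)\rangle|}.
\]
The area formula $\int_{\partial_i K} F\,d\mathcal H^{n-1} = \int_{\Omega_C} F(r_K(v))\,J r_K(v)\,dv$ then delivers the second displayed equality.

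For the first equality I would use that at a boundary point $y = r_K(v)$ with outer normal $u = \alpha_K(v)$ the supremum defining $h_K(u)$ is attained at $y$, so $h_K(\alpha_K(v)) = \langle r_K(v), \alpha_K(v)\rangle = \varrho_K(v)\langle v, \alpha_K(v)\rangle$; since $v\in\operatorname{int}C$ and $\alpha_K(v)\in C^\circ$ force $\langle v,\alpha_K(v)\rangle \le 0$, this gives $\bar h_K(\alpha_K(v)) = \varrho_K(v)\,|\langle v,\alpha_K(v)\rangle|$. Substituting this into the Jacobian converts $\varrho_K^{\,n-1}/|\langle v,\alpha_K\rangle|$ into $\varrho_K^{\,n}/\bar h_K(\alpha_K)$, yielding the first equality. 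The main obstacle is not the Jacobian algebra but the measure-theoretic bookkeeping: one must verify that the set of $v$ where $\varrho_K$ fails to be differentiable, together with the non-smooth points of $\partial K$, is $\mathcal H^{n-1}$-negligible on both sides and that $r_K$ carries null sets to null sets, so that the a.e.-defined Jacobian legitimately governs the full integral. Handling the non-compactness of $\Omega_C$ by the exhaustion above, and invoking monotone convergence for nonnegative $F$ (or absolute integrability in the integrable case), closes this gap.
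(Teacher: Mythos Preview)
Your argument is correct: the radial map $r_K$ is a bi-Lipschitz bijection from $\Omega_C$ (locally, on compact pieces) onto $\partial_i K$, the Jacobian computation via the first fundamental form is right, and the support-function identity $\bar h_K(\alpha_K(v))=\varrho_K(v)\,|\langle v,\alpha_K(v)\rangle|$ correctly converts between the two right-hand sides.

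However, the paper itself does \emph{not} prove this lemma. It is quoted from Schneider's preprint \cite{schneider weighted} (``Weighted cone-volume measures of pseudo-cones'') and stated in the preliminaries section purely as a tool. So there is no ``paper's own proof'' to compare against; your proposal supplies a self-contained proof where the paper simply imports the result. What your approach buys is independence from the external reference and an explicit Jacobian, at the cost of the measure-theoretic housekeeping you already flagged (a.e.\ differentiability of $\varrho_K$, negligibility of the singular set under $r_K$, exhaustion of the non-compact $\Omega_C$). Schneider's paper carries out essentially this same area-formula argument, so your route is the standard one; it is just that the present paper chose to cite rather than reproduce it.
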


		In this paper, we define $\Omega := \Omega_{C^\circ} = S^{n-1} \cap \operatorname{int} C^\circ$. For any $u \in \Omega$, the support hyperplane of a $C$-pseudo-cone $K$ is defined as
	$$
	H_{K}(u)=\left\{x \in \mathbb{R}^{n}: \langle x, u\rangle=h_{K}(u)\right\}, \quad H^{-}_{K}(u)=\left\{x \in \mathbb{R}^{n}: \langle x, u\rangle \leq h_{K}(u)\right\}.$$
	
	A $C$-pseudo-cone $K$ is \textit{$C$-determined} by a compact set $\omega \subset \Omega = S^{n-1} \cap \operatorname{int} C^\circ$ if $K = C \cap \bigcap_{u \in \omega} H_K^-(u)$. The collection $\mathcal{K}(C, \omega)$ consists of such $C$-pseudo-cones that are $C$-determined by $\omega$.  
	
	 We denote $$K^{(\omega)} = C \cap \bigcap_{u \in \omega} H_K^-(u)$$
	 for a $C$-pseudo-cone $K$. $K^{(\omega)}\in\mathcal{K}(C, \omega)$.
	
	The Wulff shape construction provides critical examples: For compact $\omega \subset \Omega$ and positive continuous function $h: \omega \to \mathbb{R}_+$, the   \textit{Wulff shape} $[h]$ associated with $(C,\omega,h)$ is defined as:
	\[
	[h] = C \cap \bigcap_{u \in \omega} \{ y \in \mathbb{R}^n : \langle y, u \rangle \leq -h(u) \}
	\]
	belongs to $\mathcal{K}(C, \omega)$.

	The following lemma, established in \cite{schneider weighted}, will be used in our analysis:
	\begin{lemma}\label{compace conv}
		Let $K_j$ be a sequence of $C$-pseudo-cones converging to $K$, i.e., $K_j \to K$. For any nonempty compact set $\omega \subset \Omega$, then $K_j^{(\omega)}$ converge to $K^{(\omega)}$.
	\end{lemma}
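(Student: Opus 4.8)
The plan is to route everything through support functions: I will first show that pseudo\nobreakdash-cone convergence $K_j\to K$ (in the sense of Definition \ref{conv def}) forces the support functions to converge uniformly on $\omega$, and then deduce convergence of the Wulff shapes $K_j^{(\omega)}=C\cap\bigcap_{u\in\omega}H^-_{K_j}(u)$ from uniform convergence of their defining data. Three structural facts will be used repeatedly. First, every $C$-pseudo-cone satisfies $K\subseteq C$ (if $x\in K$ then $(1+\lambda)x\in K$ for all $\lambda\ge0$, so $x$ is a recession direction, i.e.\ $x\in\operatorname{rec}K=C$); consequently $K_j\subseteq K_j^{(\omega)}$. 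Second, $\mathfrak v\in\operatorname{int}(-C^\circ)$ gives $\langle\mathfrak v,v\rangle>0$ for all $v\in C\cap S^{n-1}$, so each slice $C^-(t)$, and hence each truncation $K_j^-(t)\subseteq C^-(t)$, is a compact convex set. Third, since $\omega\subset\Omega=S^{n-1}\cap\operatorname{int}C^\circ$ is compact there is a uniform gap $\delta>0$ with $\langle u,v\rangle\le-\delta$ for all $u\in\omega$ and $v\in C\cap S^{n-1}$, whence $\langle u,y\rangle\le-\delta\lvert y\rvert$ for every $u\in\omega$ and $y\in C$.

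Step 1 (uniform localization of the support function). The crux is that for $u\in\omega$ the supremum defining $h_{K_j}(u)$ is attained in a bounded slice that is uniform in both $u$ and $j$. The estimate $\langle u,y\rangle\le-\delta\lvert y\rvert$ makes $\langle u,\cdot\rangle$ coercive on the closed set $K_j$, so the maximum is attained at some $y^{*}$ with $\langle u,y^{*}\rangle=h_{K_j}(u)$. Fixing a point $p\in K^-(t_0)$ and using $K_j^-(t_0)\to K^-(t_0)$ to produce $p_j\in K_j^-(t_0)$ with $p_j\to p$, one gets $h_{K_j}(u)\ge\langle u,p_j\rangle\ge-C_1$ with $C_1$ independent of $u$ and $j$; combined with $h_{K_j}(u)\le-\delta\lvert y^{*}\rvert$ this yields $\lvert y^{*}\rvert\le C_1/\delta$, hence $\langle y^{*},\mathfrak v\rangle\le\lvert y^{*}\rvert\le C_1/\delta$. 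Choosing $t_1=\max(t_0,C_1/\delta)$ (enlarged so that the same argument applies to $K$) gives $h_{K_j}(u)=h_{K_j^-(t_1)}(u)$ and $h_K(u)=h_{K^-(t_1)}(u)$ for every $u\in\omega$ and all large $j$.

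Step 2 (uniform convergence) and Step 3 (convergence of Wulff shapes). Since $K_j^-(t_1)\to K^-(t_1)$ in the Hausdorff metric as compact convex sets, their support functions converge uniformly on $S^{n-1}$, hence on $\omega$; with Step 1 this gives $\bar h_{K_j}\to\bar h_K$ uniformly on $\omega$. Now write $L_j(t)=(K_j^{(\omega)})^-(t)=C^-(t)\cap\bigcap_{u\in\omega}\{x:\langle x,u\rangle\le h_{K_j}(u)\}$ and $L(t)$ analogously for $K$; I will verify Definition \ref{conv def} by showing $L_j(t)\to L(t)$ in the Hausdorff metric for $t\ge t_2$. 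The inclusion $\limsup_j L_j(t)\subseteq L(t)$ is automatic, since a limit of points $x_j\in L_j(t)$ lies in the closed set $C^-(t)$ and satisfies $\langle x,u\rangle\le\lim_j h_{K_j}(u)=h_K(u)$. For the reverse inclusion, note $K^{(\omega)}$ is a full-dimensional $C$-pseudo-cone, so for $t\ge t_2$ large $L(t)$ has nonempty interior; for an interior point $x$ the continuous positive function $u\mapsto h_K(u)-\langle x,u\rangle$ is bounded below on the compact set $\omega$, so the uniform convergence of Step 2 forces $\langle x,u\rangle<h_{K_j}(u)$ for all $u\in\omega$ and large $j$, i.e.\ $x\in L_j(t)$. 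Thus $\operatorname{int}L(t)\subseteq\liminf_j L_j(t)$, and taking closures gives $L(t)\subseteq\liminf_j L_j(t)$. The two inclusions yield Hausdorff convergence, and $(K_j^{(\omega)})^-(t_2)\supseteq K_j^-(t_2)\neq\emptyset$ handles the nonemptiness clause, completing Definition \ref{conv def}.

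The main obstacle is Step 1: the definition of pseudo-cone convergence only controls the truncations $K_j^-(t)$, whereas the Wulff shape depends on the global support functions $h_{K_j}$ on $\omega$, so everything hinges on showing that these support-function values are already determined by a single uniform truncation level $t_1$. The coercivity estimate $\langle u,y\rangle\le-\delta\lvert y\rvert$ together with the uniform lower bound $h_{K_j}(u)\ge-C_1$ is exactly what makes this localization uniform over $\omega$ and over $j$; once it is in place, Steps 2 and 3 are standard continuity arguments, the only delicate point there being that full-dimensionality of $K^{(\omega)}$ supplies the interior points needed for the lower semicontinuity $L(t)\subseteq\liminf_j L_j(t)$.
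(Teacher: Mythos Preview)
The paper does not actually prove this lemma: it is quoted verbatim from Schneider's preprint \cite{schneider weighted} and used as a black box. So there is nothing in the present paper to compare your argument against.

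Your self-contained proof is correct. The key localization in Step~1 is sound: the compactness of $\omega$ inside $\operatorname{int}C^{\circ}$ gives the uniform gap $\langle u,y\rangle\le-\delta|y|$ on $C$, and combined with the uniform lower bound $h_{K_j}(u)\ge-C_1$ (obtained from any convergent sequence $p_j\in K_j^{-}(t_0)$) this pins the support-function maximizers into a single truncation level $t_1$. Step~2 then follows from the standard fact that Hausdorff convergence of compact convex sets implies uniform convergence of support functions. In Step~3 your Kuratowski-type argument is also fine: the inclusion $\operatorname{int}L(t)\subseteq\liminf_j L_j(t)$ uses exactly the right ingredients (compactness of $\omega$, continuity of $u\mapsto h_K(u)-\langle x,u\rangle$, and uniform convergence from Step~2), and since all the sets sit inside the compact $C^{-}(t)$, Kuratowski convergence upgrades to Hausdorff convergence. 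The observation that $K^{(\omega)}$ is full-dimensional (it contains points of $\operatorname{int}C$ far from the origin, by the same gap estimate) supplies the interior points needed to close the argument.

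Two small remarks that could be tightened in a write-up but are not genuine gaps: your justification of $K\subseteq C$ implicitly uses the standard fact that for a closed convex set the recession cone can be tested from any single point, and in Step~3 it would be cleaner to state explicitly the equivalence of Kuratowski and Hausdorff convergence for closed subsets of a fixed compact set. Neither affects correctness.
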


Let $E$ be a convex set in $\mathbb{R}^{n}$. Its $n$-dimensional Gaussian measure $\gamma^n(E)$ is given by
$$\gamma^{n}(E):=\frac{1}{(2 \pi)^{\frac{n}{2}}}\int_{E} e^{-\frac{|x|^{2}}{2}} d x.$$

	The Gaussian measure $\gamma^{n}$ is log-concave with respect to convex sets, that is:
		\begin{equation*}
			\gamma^{n}((1-t) K+t L) \geq \gamma^{n}(K)^{1-t} \gamma^{n}(L)^{t}
		\end{equation*}
		equality holds if and only if $K=L$.
For further inequalities related to the Gaussian measure, we refer to
	\cite{cianchideficit,Z2021,Gardner2010,milman2005,KLweightedprojection,
		Lutwak1993,Lutwak1996}. For a comprehensive treatment of the theory of convex bodies, the classical texts \cite{Gardnerbook2006} and \cite{schneiderbook2014} are recommended references.
	\\

	\section{The $L_{p}$ Gaussian surface area measure}
	The primary aim of this section is to establish the $L_{p}$ variational formula. For this purpose, we need the following lemma from  Schneider \cite{schneider weighted}. Let $ \Omega := \Omega_{C^\circ} = S^{n-1} \cap \operatorname{int} C^\circ$.
	\begin{lemma}\label{rho deri}
		Let $ \omega \subset \Omega$  be a nonempty  compact set, and  a $C$-pseudo-cone $K \in \mathcal{K}(C, \omega)$. Given a continuous function  $f: \omega \rightarrow \mathbb{R}$,  then there exists a   constant  $\delta>0$ which is small enough  such that for any $|t| \leq \delta$, the function  $h_{t}$  defined as
		\begin{equation}\label{logminkowskisum}
			\log h_{t}(u)=\log \bar{h}_{K}(u)+t f(u)+o(t, u), \quad u \in \omega,
		\end{equation}
		 where   $o(t, \cdot): \omega \rightarrow \mathbb{R}$  is a continuous function satisfying  $\lim _{t \rightarrow 0} o(t, \cdot) / t=0$  uniformly on  $\omega$. The Wulff shape $\left[h_{t}\right]$   associated with  $\left(C, \omega, h_{t}\right)$ is termed  \textit{a logarithmic
			family of Wulff shapes}.
		
		For almost all  $v \in \Omega_{C}$, the derivative of the radial function is given by
		\begin{equation}\label{log rho}
			\left.\frac{\mathrm{d} \varrho_{\left[h_{t}\right]}(v)}{\mathrm{d} t}\right|_{t=0}=\lim _{t \rightarrow 0} \frac{\varrho_{\left[h_{t}\right]}(v)-\varrho_{K}(v)}{t}=f\left(\alpha_{K}(v)\right)\varrho_{K}(v).
		\end{equation}
	\end{lemma}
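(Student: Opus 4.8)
The plan is to reduce the statement to an envelope (Danskin-type) argument built on a supremum representation of the radial function. First I would record that, because $[h_t]$ is the Wulff shape associated with $(C,\omega,h_t)$ and $v\in\operatorname{int}C$ forces $\langle v,u\rangle<0$ for every $u\in\omega\subset\operatorname{int}C^\circ$, a point $\lambda v$ lies in $[h_t]$ exactly when $\lambda\ge h_t(u)/|\langle v,u\rangle|$ for all $u\in\omega$. Taking the minimal such $\lambda$ yields
\begin{equation*}
\varrho_{[h_t]}(v)=\sup_{u\in\omega}\frac{h_t(u)}{|\langle v,u\rangle|},\qquad \varrho_{K}(v)=\varrho_{[\bar h_K]}(v)=\sup_{u\in\omega}\frac{\bar h_K(u)}{|\langle v,u\rangle|},
\end{equation*}
where the second equality uses $K=K^{(\omega)}=[\bar h_K]$ for $K\in\mathcal K(C,\omega)$. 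Since $\omega$ is compact and, for fixed $v$, $\inf_{u\in\omega}|\langle v,u\rangle|>0$, both suprema are attained; write $u_t\in\omega$ for a maximiser of the first and abbreviate $g(u,v)=\bar h_K(u)/|\langle v,u\rangle|$, so that $\varrho_{[h_t]}(v)=g(u_t,v)\,e^{tf(u_t)+o(t,u_t)}$ and $\varrho_K(v)=g(u_0,v)=\max_{u\in\omega}g(u,v)$.

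Next I would single out the full-measure set of $v\in\Omega_C$ for which the boundary point $r_K(v)=\varrho_K(v)v$ has a \emph{unique} outer normal; on this set the maximiser $u_0$ of $g(\cdot,v)$ is unique and equals $\alpha_K(v)$, since the support relation $\langle r_K(v),\alpha_K(v)\rangle=h_K(\alpha_K(v))=-\bar h_K(\alpha_K(v))$ gives $g(\alpha_K(v),v)=\varrho_K(v)$. A standard compactness argument then shows $u_t\to u_0=\alpha_K(v)$ as $t\to0$: because $tf(u)+o(t,u)\to0$ uniformly on $\omega$, we have $h_t\to\bar h_K$ uniformly, so every subsequential limit of $u_t$ maximises $g(\cdot,v)$, and uniqueness forces the whole family to converge.

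The derivative is then obtained by a two-sided squeeze. Using that $u_0$ maximises $g(\cdot,v)$ while $u_t$ maximises the perturbed quotient, I get the sandwich
\begin{equation*}
\varrho_K(v)\,e^{tf(u_0)+o(t,u_0)}\;\le\;\varrho_{[h_t]}(v)\;\le\;\varrho_K(v)\,e^{tf(u_t)+o(t,u_t)}.
\end{equation*}
Subtracting $\varrho_K(v)$, dividing by $t$ (tracking the sign reversal when $t<0$), and letting $t\to0$, the elementary limit $(e^{s}-1)/s\to1$ together with $u_t\to u_0$, the continuity of $f$, and the uniform decay $o(t,\cdot)/t\to0$ collapse both outer expressions to $\varrho_K(v)f(u_0)$. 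Hence the two-sided limit exists and equals $\varrho_K(v)f(\alpha_K(v))$, as claimed.

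The main obstacle is the convergence $u_t\to\alpha_K(v)$, which is precisely where the phrase ``for almost all $v$'' enters: it hinges on the a.e.\ uniqueness of the outer normal along the radial boundary, i.e.\ on the differentiability of $\partial K$ in the radial directions. Away from this subtlety the estimates are uniform in $v$ on compact subsets of $\Omega_C$, so that once the a.e.\ uniqueness is in hand only routine continuity and compactness facts are required to complete the argument.
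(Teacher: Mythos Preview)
Your argument is correct. The paper does not actually prove this lemma; it quotes it verbatim from Schneider's \emph{Weighted cone-volume measures of pseudo-cones} and uses it as a black box. What you have written is precisely the standard Danskin-type envelope argument that underlies such radial-derivative formulas (and is, in essence, how Schneider and earlier Huang--Lutwak--Yang--Zhang treat it): represent $\varrho_{[h_t]}(v)$ as a supremum over the compact set $\omega$, use a.e.\ uniqueness of the outer normal to force convergence of the maximisers $u_t\to\alpha_K(v)$, and squeeze between the values at $u_0$ and at $u_t$. All the steps you outline go through; in particular your identification of $u_0$ with $\alpha_K(v)$ is justified because $r_K(v)\in\operatorname{int}C$ forces any outer normal at that point to lie in the compact set $\omega$, and the uniform estimate $\inf_{u\in\omega}|\langle v,u\rangle|>0$ makes $g(\cdot,v)$ continuous so that the compactness/uniqueness argument for $u_t\to u_0$ is routine.
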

	
	The Gaussian covolume $V_{G}(K) $ of a $C$-pseudo-cone $K$ is given  in \cite{shan2025} by
	\begin{definition}\label{coco}
		$$V_{G}(K)=\gamma^{n}(C \backslash K).$$
	\end{definition}

	Recall the \textit{$L_{p}$ Gaussian surface area measure} $S_{p,\gamma^{n}}(K,\cdot)$ of a $C$-pseudo-cone $K$ is  defined as follows:
	\begin{definition}
		For every Borel set $\eta\subset\Omega$,
		\begin{equation*}
			S_{p,\gamma^{n}}(K,\eta):=\frac{1}{(\sqrt{2 \pi})^{n}} \int_{\nu_{K}^{-1}(\eta)}|\langle x,\nu_{K}(x) \rangle|^{1-p} e^{-\frac{|x|^{2}}{2}} d \mathcal{H}^{n-1}(x)=\int_{\eta}\bar{h}_{K}^{1-p}(u)d	S_{\gamma^{n}}(K,u).
		\end{equation*}
	
	\end{definition}

	The following $L_{p}$ variational formula serves as the foundation for solving the $L_{p}$ Minkowski problem using variational methods. We begin by proving the variational formula for the Gaussian covolume. The variational approach originated in \cite{HLYZdual}.
	
	\begin{lemma}\label{covari}
		If  $K \in \mathcal{K}(C, \omega)$ for some nonempty compact set  $\omega \subset \Omega$.  For $p\neq 0$,  let  $f: \omega \rightarrow \mathbb{R}$ be a continuous function,  and  $\left[\left( \bar{h}_{K}(u)^{p}+t f(u)\right)^{\frac{1}{p}}\right]$   denotes the  Wulff shapes associated with  $\left(C, \omega, \left( \bar{h}_{K}(u)^{p}+t f(u)\right)^{\frac{1}{p}}\right) $, then
		\begin{equation}
			\lim _{t \rightarrow 0} \frac{V_{G}\left(\left[\left( \bar{h}_{K}(u)^{p}+t f(u)\right)^{\frac{1}{p}}\right]\right)-V_{G}(K)}{t}=\frac{1}{p}\int_{\omega} f(u) dS_{p,\gamma^{n}}(K,  u).
		\end{equation}
		
	\end{lemma}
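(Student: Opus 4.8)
The plan is to push the whole computation through the radial-function variational formula of Lemma~\ref{rho deri} and then convert the resulting integral over $\Omega_C$ into a boundary integral via the transformation formula of Lemma~\ref{integral trans}. Write $h_t = (\bar{h}_K^p + tf)^{1/p}$. Since $\omega$ is compact and $\bar{h}_K$ is continuous and strictly positive there (and $f$ is bounded), for all $|t|\le\delta$ with $\delta$ small one has $\bar{h}_K^p + tf>0$ on $\omega$, and
\[
\log h_t(u) = \log\bar{h}_K(u) + \tfrac1p\log\!\bigl(1 + tf(u)\bar{h}_K(u)^{-p}\bigr) = \log\bar{h}_K(u) + t\cdot\tfrac1p f(u)\bar{h}_K(u)^{-p} + o(t,u),
\]
with $o(t,\cdot)/t\to0$ uniformly on $\omega$. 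Hence $[h_t]$ is a logarithmic family of Wulff shapes, and Lemma~\ref{rho deri} applies with the continuous function $\tfrac1p f\bar{h}_K^{-p}$ in the role of its ``$f$'', giving for a.e.\ $v\in\Omega_C$
\[
\left.\frac{d\varrho_{[h_t]}(v)}{dt}\right|_{t=0} = \tfrac1p\,f(\alpha_K(v))\,\bar{h}_K^{-p}(\alpha_K(v))\,\varrho_K(v).
\]

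Next I would pass to polar coordinates. Writing $g(s) = \int_0^s r^{n-1}e^{-r^2/2}\,dr$ (so $g'(s) = s^{n-1}e^{-s^2/2}$), one has $V_G([h_t]) = \gamma^n(C\setminus[h_t]) = \frac{1}{(2\pi)^{n/2}}\int_{\Omega_C} g(\varrho_{[h_t]}(v))\,dv$, and similarly for $V_G(K) = V_G([\bar h_K])$. The difference quotient is $\frac{1}{(2\pi)^{n/2}}\int_{\Omega_C}\frac{g(\varrho_{[h_t]}(v))-g(\varrho_K(v))}{t}\,dv$, whose integrand converges a.e.\ to $\tfrac1p f(\alpha_K(v))\bar{h}_K^{-p}(\alpha_K(v))\varrho_K(v)^n e^{-\varrho_K(v)^2/2}$ by the chain rule and the formula above. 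To interchange limit and integral I would dominate: from the uniform closeness $|h_t/\bar h_K-1|\le C|t|$ on $\omega$ and the Wulff-shape identities $[\lambda h]=\lambda[h]$ and $[\bar h_K]=K$ (as $K\in\mathcal K(C,\omega)$), one gets $(1-C|t|)K\subseteq[h_t]\subseteq(1+C|t|)K$, hence $|\varrho_{[h_t]}(v)-\varrho_K(v)|\le C|t|\varrho_K(v)$ on $\Omega_C$; by the mean value theorem the difference quotient is then bounded by $C\,\varrho_K(v)^n e^{-\varrho_K(v)^2/8}$, which is bounded by a constant since $\varrho_K\ge\operatorname{dist}(o,K)>0$ and $s\mapsto s^n e^{-s^2/8}$ is bounded, hence integrable over the finite-measure set $\Omega_C$. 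Dominated convergence yields
\[
\lim_{t\to0}\frac{V_G([h_t])-V_G(K)}{t} = \frac{1}{(2\pi)^{n/2}}\int_{\Omega_C}\tfrac1p f(\alpha_K(v))\bar{h}_K^{-p}(\alpha_K(v))\varrho_K(v)^n e^{-\varrho_K(v)^2/2}\,dv.
\]

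Finally I would recognize the right-hand side. Apply Lemma~\ref{integral trans} with $F(y) = \tfrac1p f(\nu_K(y))\bar{h}_K^{-p}(\nu_K(y))|\langle y,\nu_K(y)\rangle|\,e^{-|y|^2/2}$ on $\partial_i K$: since $|r_K(v)| = \varrho_K(v)$ and $\langle r_K(v),\alpha_K(v)\rangle = \varrho_K(v)\langle v,\alpha_K(v)\rangle$, the integrand above equals $F(r_K(v))\,\varrho_K^{n-1}(v)/|\langle v,\alpha_K(v)\rangle|$, so the integral equals $\int_{\partial_i K} F\,d\mathcal H^{n-1}$. On $\partial_i K$ the point $y$ lies on its support hyperplane with normal $\nu_K(y)$, so $|\langle y,\nu_K(y)\rangle| = \bar h_K(\nu_K(y))$; thus $\bar h_K^{-p}(\nu_K(y))|\langle y,\nu_K(y)\rangle| = |\langle y,\nu_K(y)\rangle|^{1-p}$. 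Moreover, for $K\in\mathcal K(C,\omega)$ the sets $\partial_i K$ and $\nu_K^{-1}(\omega)$ coincide up to an $\mathcal H^{n-1}$-null set (a smooth boundary point in $\operatorname{int}C$ must lie on some $H_K(u)$ with $u\in\omega$, and conversely a smooth point with normal in $\operatorname{int}C^\circ$ cannot lie on $\partial C$). Comparing with the definition of $S_{p,\gamma^n}(K,\cdot)$ then gives $\int_{\partial_i K} F\,d\mathcal H^{n-1} = \tfrac1p\int_\omega f(u)\,dS_{p,\gamma^n}(K,u)$, which is the assertion.

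The main obstacle is the dominated-convergence step: establishing the uniform-in-$t$ Lipschitz estimate $|\varrho_{[h_t]}-\varrho_K|\le C|t|\varrho_K$ on $\Omega_C$, which is what makes the difference quotients integrably dominated; this relies precisely on the scaling and monotonicity of the Wulff shape together with the uniform positivity and continuity of $\bar h_K$ on the compact set $\omega$. The measure-zero identification of $\partial_i K$ with $\nu_K^{-1}(\omega)$ at the end is routine but should be stated carefully.
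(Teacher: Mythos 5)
Your proposal is correct and follows essentially the same route as the paper: rewrite $h_t=(\bar h_K^p+tf)^{1/p}$ as a logarithmic family of Wulff shapes so that Lemma~\ref{rho deri} applies with $\tfrac1p f\bar h_K^{-p}$ in place of $f$, differentiate the polar-coordinate expression for $V_G$ under the integral sign by dominated convergence, and convert back via Lemma~\ref{integral trans}; in fact you justify the domination step more explicitly than the paper, which merely asserts a uniform bound on the difference quotients. The only slip is the direction of your sandwich inclusion (for pseudo-cones a larger dilation factor gives a \emph{smaller} set, so it should read $(1+C|t|)K\subseteq[h_t]\subseteq(1-C|t|)K$), which does not affect the resulting estimate $|\varrho_{[h_t]}-\varrho_K|\le C|t|\varrho_K$.
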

	
	\begin{proof}
	For $p\neq 0$, let
		$$h_{t}(u)=\left( \bar{h}_{K}(u)^{p}+t f(u)\right)^{\frac{1}{p}},$$
		then
		\begin{equation}\label{logp}
		\log h_{t}=\log\bar{h}_{K}+\frac{tf}{p\bar{h}_{K}^{p}}+o(t,u),
		\end{equation}
		 where   $o(t, \cdot): \omega \rightarrow \mathbb{R}$  is a continuous function satisfying  $\lim _{t \rightarrow 0} o(t, \cdot) / t=0$  uniformly on  $\omega$.
		 
		Using polar coordinates, we have
		$$V_{G}\left(\left[h_{t}\right]\right)=\frac{1}{(\sqrt{2 \pi})^{n}}\int_{\Omega_{C}} \int_{0}^{\varrho_{\left[h_{t}\right]}(v)} e^{-\frac{r^{2}}{2}} r^{n-1} dr d v=\frac{1}{(\sqrt{2 \pi})^{n}}\int_{\Omega_{C}} F_{t}(v)dv,$$
		where we denote  $F_{t}(v)=\int_{0}^{\varrho_{\left[h_{t}\right]}(v)} e^{-\frac{r^{2}}{2}} r^{n-1} dr$.
		Therefore,
		\begin{align}
			\frac{F_{t}(v)-F_{0}(v)}{t} & =\frac{1}{t} \int_{\varrho_{K}(v)}^{\varrho_{\left[h_{t}\right]}(v)} e^{-\frac{r^{2}}{2}} r^{n-1} d r \\
			& =\frac{\varrho_{\left[h_{t}\right]}(v)-\varrho_{K}(v)}{t} \cdot \frac{1}{\varrho_{\left[h_{t}\right]}(v)-\varrho_{K}(v)} \int_{\varrho_{K}(v)}^{\varrho_{\left[h_{t}\right]}(v)} e^{-\frac{r^{2}}{2}} r^{n-1} d r.
		\end{align}
		Moreover, the last term converges to $e^{-\frac{\varrho_{K}^{2}}{2}} \varrho_{K}^{n-1}$. By Lemma \ref{rho deri}, we have 
			\begin{equation}
			\left.\frac{\mathrm{d} \varrho_{\left[h_{t}\right]}(v)}{\mathrm{d} t}\right|_{t=0}=\lim _{t \rightarrow 0} \frac{\varrho_{\left[h_{t}\right]}(v)-\varrho_{K}(v)}{t}=\frac{f\left(\alpha_{K}(v)\right)\varrho_{K}(v)}{p\bar{h}_{K}^{p}(\alpha_{K}(v))},
		\end{equation}
		it follows that
		\begin{equation}
			\lim _{t \rightarrow 0} \frac{F_{t}(v)-F_{0}(v)}{t}=e^{-\frac{\varrho_{K}^{2}}{2}} \frac{f\left(\alpha_{K}(v)\right) \varrho_{K}^{n}(v)}{p\bar{h}_{K}^{p}(\alpha_{K}(v))}
		\end{equation}
		for almost  $v \in \Omega_{C}$. Moreover, for sufficiently small $|t|$, there exists a constant $M$ satisfying
		$$\left|\frac{F_{t}(v)-F_{0}(v)}{t}\right|\le M.$$
	Applying the dominated convergence theorem and Lemma \ref{integral trans},  we obtain 
		\begin{align*}
			\lim _{t \rightarrow 0} \frac{V_{G}\left(\left[h_{t}\right]\right)-V_{G}(K)}{t}&=\frac{1}{(\sqrt{2 \pi})^{n}}\int_{\Omega_{C}}	\lim _{t \rightarrow 0} \frac{F_{t}(v)-F_{0}(v)}{t} dv\\
			&=\frac{1}{(\sqrt{2 \pi})^{n}}\int_{\Omega_{C}}e^{-\frac{\varrho_{K}^{2}}{2}} \frac{f\left(\alpha_{K}(v)\right) \varrho_{K}^{n}(v)}{p\bar{h}_{K}^{p}(\alpha_{K}(v))} dv\\
			&=\int_{\Omega}f(u)\bar{h}_{K}(u)^{1-p}dS_{\gamma^{n}}(K,  u)\\
			&=\int_{\omega}f(u)dS_{p,\gamma^{n}}(K,  u).
		\end{align*}
	\end{proof}
	
	Naturally, the $L_{p}$ variational formula for the Gaussian volume is as follows.
	\begin{lemma}\label{vari formula}
		If  $K \in \mathcal{K}(C, \omega)$ for some nonempty compact set  $\omega \subset \Omega$.  For $p\neq 0$,  let  $f: \omega \rightarrow \mathbb{R}$ be a continuous function,  and  $\left[\left( \bar{h}_{K}(u)^{p}+t f(u)\right)^{\frac{1}{p}}\right]$   denotes the  Wulff shapes associated with  $\left(C, \omega, \left( \bar{h}_{K}(u)^{p}+t f(u)\right)^{\frac{1}{p}}\right) $, then
	\begin{equation}
		\lim _{t \rightarrow 0} \frac{\gamma^{n}\left(\left[\left( \bar{h}_{K}(u)^{p}+t f(u)\right)^{\frac{1}{p}}\right]\right)-\gamma^{n}(K)}{t}=-\frac{1}{p}\int_{\omega} f(u) dS_{p,\gamma^{n}}(K,  u).
	\end{equation}
		
	\end{lemma}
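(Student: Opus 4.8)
The plan is to reduce this statement directly to Lemma~\ref{covari} by means of the complementation identity for the Gaussian measure. Every $C$-pseudo-cone $L$ is contained in $C$, so $C$ is the union of $L$ and $C\setminus L$ up to a set of Lebesgue (hence Gaussian) measure zero; therefore, using Definition~\ref{coco},
$$\gamma^{n}(L)=\gamma^{n}(C)-\gamma^{n}(C\setminus L)=\gamma^{n}(C)-V_{G}(L).$$
The number $\gamma^{n}(C)$ is a finite constant (at most the total Gaussian mass $1$) and, crucially, does not depend on $t$.

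First I would record that the competitors are admissible and that they match $K$ at $t=0$. For $|t|$ small, $\bar{h}_{K}^{p}+tf$ is a positive continuous function on the compact set $\omega$: since $\bar{h}_{K}$ is continuous and bounded below by a positive constant on $\omega$, so is $\bar{h}_{K}^{p}$, and adding $tf$ keeps it positive for $|t|\le\delta$. Hence $h_{t}=(\bar{h}_{K}^{p}+tf)^{1/p}$ defines a Wulff shape $[h_{t}]\in\mathcal{K}(C,\omega)$, in particular a $C$-pseudo-cone contained in $C$, and $[h_{0}]=K$ because $K\in\mathcal{K}(C,\omega)$. This is exactly the logarithmic family of Wulff shapes already used in the proof of Lemma~\ref{covari}.

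Then I would apply the displayed identity with $L=[h_{t}]$ and with $L=K$, subtract, divide by $t$, and let $t\to 0$:
$$\lim_{t\to 0}\frac{\gamma^{n}([h_{t}])-\gamma^{n}(K)}{t}=-\lim_{t\to 0}\frac{V_{G}([h_{t}])-V_{G}(K)}{t}=-\frac{1}{p}\int_{\omega}f(u)\,dS_{p,\gamma^{n}}(K,u),$$
the last equality being Lemma~\ref{covari}. This proves the claim.

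The only point that genuinely deserves care — and the natural place to pause — is the justification of the complementation identity, which rests on the inclusion $[h_{t}]\subseteq C$ (immediate from the Wulff shape construction) together with $\gamma^{n}(C)<\infty$; once these are granted the differentiation is immediate. If one preferred a self-contained argument avoiding the identity, an alternative is to repeat verbatim the polar-coordinate computation from the proof of Lemma~\ref{covari}, now writing $\gamma^{n}([h_{t}])=\frac{1}{(\sqrt{2\pi})^{n}}\int_{\Omega_{C}}\int_{\varrho_{[h_{t}]}(v)}^{\infty}e^{-r^{2}/2}r^{n-1}\,dr\,dv$ in place of the covolume integral; the resulting difference quotient integrand is precisely $-1$ times the one appearing there, so the same dominated-convergence argument (using Lemma~\ref{rho deri} for the derivative of $\varrho_{[h_{t}]}$) and Lemma~\ref{integral trans} yield the stated formula with the opposite sign.
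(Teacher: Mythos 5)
Your proof is correct and is essentially the paper's own argument: the authors likewise deduce the lemma from Lemma~\ref{covari} via the identity $\gamma^{n}(L)+V_{G}(L)=\gamma^{n}(C)$ applied to $[h_{t}]$ and to $K$. Your added care about admissibility of $h_t$ for small $|t|$ and the alternative direct polar-coordinate computation are fine but not needed beyond what the paper records.
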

	\begin{proof}
		This follows from Lemma \ref{covari} and the fact
		$$\gamma^{n}(\left[\left( \bar{h}_{K}(u)^{p}+t f(u)\right)^{\frac{1}{p}}\right])+V_{G}(\left[\left( \bar{h}_{K}(u)^{p}+t f(u)\right)^{\frac{1}{p}}\right])=\gamma^{n}(K)+V_{G}(K)=\gamma^{n}(C).$$
	\end{proof}
	
	By the definition of $S_{p,\gamma^{n}}K$ and the results established in our previous work \cite{shan2025} regarding the finiteness and weak convergence of the Gaussian surface area measure ($p=1$), we conclude that the $L_{p}$ Gaussian surface area measure is also a finite measure. Furthermore, it is weakly convergent in the sense of Definition \ref{conv def} when restricted to the set $\mathcal{K}(C, \omega)$, for some nonempty compact set $\omega\subset \Omega$:
	
	\begin{lemma}\label{weal  conve}
		For a nonempty compact set  $\omega \subset \Omega$,   and  $ K_{j} \in \mathcal{K}(C, \omega)$,  $j \in   \mathbb{N}$. If $K_{j} \rightarrow K $ as  $j \rightarrow \infty $,  then the measure $S_{p,\gamma^{n}}\left(K_{j}, \cdot\right)$ converge weakly to $ S_{p,\gamma^{n}}\left(K, \cdot\right)$.
	\end{lemma}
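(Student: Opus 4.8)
The plan is to reduce the statement to the already-known weak convergence of the Gaussian surface area measure $S_{\gamma^n}(K_j,\cdot)$ (the case $p=1$ from \cite{shan2025}) together with uniform convergence of the weight $\bar h_{K_j}^{\,1-p}$ on $\omega$. Recall that for every Borel $\eta\subset\Omega$ we have $S_{p,\gamma^n}(K,\eta)=\int_\eta \bar h_K^{\,1-p}(u)\,dS_{\gamma^n}(K,u)$, so testing against a continuous $g:\Omega\to\mathbb{R}$ gives
\begin{equation*}
\int_\Omega g(u)\,dS_{p,\gamma^n}(K_j,u)=\int_\Omega g(u)\,\bar h_{K_j}^{\,1-p}(u)\,dS_{\gamma^n}(K_j,u).
\end{equation*}
First I would fix a compact set $\omega'\subset\Omega$ (one may take $\omega'=\omega$, or a compact neighborhood of $\omega$ in $\Omega$) that contains the supports of all the measures $S_{\gamma^n}(K_j,\cdot)$ and $S_{\gamma^n}(K,\cdot)$; this holds because $K_j,K\in\mathcal{K}(C,\omega)$, so their Gauss images land in $\omega$. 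Hence it suffices to integrate continuous functions supported on $\omega$, and in particular the weights $\bar h_{K_j}^{\,1-p}$ only matter on $\omega$.

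The key step is to show that $\bar h_{K_j}\to\bar h_K$ uniformly on $\omega$, with a uniform lower bound $\bar h_{K_j}\geq c_0>0$ on $\omega$. Uniform convergence of support functions on compact subsets of $\operatorname{int} C^\circ$ is a standard consequence of the convergence $K_j\to K$ in the sense of Definition \ref{conv def}: on each slice $C^-(t)$ the convex sets $K_j^-(t)$ converge in the Hausdorff metric, and for $u\in\omega\subset\Omega_{C^\circ}$ the supremum defining $h_{K_j}(u)$ is attained on a bounded portion of $K_j$ lying in some fixed slab, uniformly in $j$; I would make this precise using Lemma \ref{compace conv} and Lemma \ref{select}. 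The uniform positive lower bound on $\bar h_{K_j}$ over the compact set $\omega$ follows because $o\notin K_j$ and the $K_j$ do not degenerate (the limit $K$ is a genuine $C$-pseudo-cone, so $\operatorname{dist}(o,K)>0$, and the convergence gives a uniform lower bound $\operatorname{dist}(o,K_j)\geq a>0$ for large $j$). Consequently $\bar h_{K_j}^{\,1-p}\to\bar h_K^{\,1-p}$ uniformly on $\omega$, and these functions are uniformly bounded above and below on $\omega$ by positive constants; this is exactly where the control is needed, since $1-p$ may be negative.

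Finally I would combine the two ingredients. Write
\begin{equation*}
\int_\Omega g\,\bar h_{K_j}^{\,1-p}\,dS_{\gamma^n}(K_j,\cdot)-\int_\Omega g\,\bar h_{K}^{\,1-p}\,dS_{\gamma^n}(K,\cdot)
\end{equation*}
as the sum of $\int_\Omega g\,(\bar h_{K_j}^{\,1-p}-\bar h_K^{\,1-p})\,dS_{\gamma^n}(K_j,\cdot)$ and $\int_\Omega g\,\bar h_K^{\,1-p}\,dS_{\gamma^n}(K_j,\cdot)-\int_\Omega g\,\bar h_K^{\,1-p}\,dS_{\gamma^n}(K,\cdot)$. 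The first term is bounded by $\|g\|_\infty\,\|\bar h_{K_j}^{\,1-p}-\bar h_K^{\,1-p}\|_{L^\infty(\omega)}\,S_{\gamma^n}(K_j,\Omega)$; here one uses that the total masses $S_{\gamma^n}(K_j,\Omega)$ are uniformly bounded — itself a consequence of the weak convergence $S_{\gamma^n}(K_j,\cdot)\to S_{\gamma^n}(K,\cdot)$ — so this term tends to $0$. The second term tends to $0$ because $g\,\bar h_K^{\,1-p}$ is a fixed bounded continuous function on $\omega$ (using $\bar h_K>0$ on $\omega$) and $S_{\gamma^n}(K_j,\cdot)\to S_{\gamma^n}(K,\cdot)$ weakly. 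This proves $\int g\,dS_{p,\gamma^n}(K_j,\cdot)\to\int g\,dS_{p,\gamma^n}(K,\cdot)$ for all continuous $g$, i.e. weak convergence. The main obstacle is the first step: establishing the uniform convergence and, crucially, the uniform two-sided positive bounds for $\bar h_{K_j}$ on $\omega$, since without the positivity the weight $\bar h_{K_j}^{\,1-p}$ with $p>1$ could blow up; everything else is a routine $\varepsilon/2$ splitting argument.
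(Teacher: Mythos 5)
Your proposal is correct and follows essentially the same route as the paper, whose proof is just the one-line observation that $\bar h_{K_j}\to\bar h_K$ uniformly together with the weak convergence of the Gaussian surface area measure ($p=1$) from the earlier work; you have simply written out the $\varepsilon/2$ splitting in detail. The extra care you take with the uniform two-sided positive bounds on $\bar h_{K_j}$ over $\omega$ (needed when $1-p<0$) is a point the paper leaves implicit, and is a worthwhile addition.
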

	
	\begin{proof}
		This follows from  the support functions $\bar{h}_{K_{j}}$  converge uniformly to $\bar{h}_{K}$ and the weak convergence of the Gaussian surface area measure.
	\end{proof}

\section{Existence of solutions when $p>0$}
We first need to establish a uniform estimate for the decay properties of the Gaussian measure.
\begin{lemma}\label{decayesti}
	$$\gamma^{n}(\mathbb{R}^{n}-rB)\le\frac{2n\sqrt{n}}{(2 \pi)^{\frac{1}{2}}r}e^{-\frac{r^{2}}{2n}}.$$
\end{lemma}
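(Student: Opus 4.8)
The plan is to bound the Gaussian tail $\gamma^n(\mathbb{R}^n - rB)$ by slicing $\mathbb{R}^n$ along coordinate directions and reducing to a one-dimensional estimate. First I would observe that if $|x| \geq r$, then at least one coordinate satisfies $|x_i| \geq r/\sqrt{n}$, so by the union bound
\[
\gamma^n(\mathbb{R}^n - rB) \leq \sum_{i=1}^{n} \gamma^n\bigl(\{x : |x_i| \geq r/\sqrt{n}\}\bigr) = n \cdot \gamma^1\bigl(\{s \in \mathbb{R} : |s| \geq r/\sqrt{n}\}\bigr),
\]
where the last equality uses that $\gamma^n$ is a product measure and each factor is the standard Gaussian on $\mathbb{R}$. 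This converts the problem into estimating the one-dimensional tail $\gamma^1([t,\infty)) = \frac{1}{\sqrt{2\pi}}\int_t^\infty e^{-s^2/2}\,ds$ with $t = r/\sqrt{n}$.

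Next I would apply the standard one-dimensional Gaussian tail bound: for $t > 0$,
\[
\frac{1}{\sqrt{2\pi}}\int_t^\infty e^{-s^2/2}\,ds \leq \frac{1}{\sqrt{2\pi}}\int_t^\infty \frac{s}{t} e^{-s^2/2}\,ds = \frac{1}{\sqrt{2\pi}\,t} e^{-t^2/2}.
\]
Hence $\gamma^1(\{|s| \geq t\}) \leq \frac{2}{\sqrt{2\pi}\,t} e^{-t^2/2}$. Substituting $t = r/\sqrt{n}$ gives the factor $\frac{2\sqrt{n}}{\sqrt{2\pi}\,r} e^{-r^2/(2n)}$, and multiplying by the $n$ from the union bound yields exactly $\frac{2n\sqrt{n}}{(2\pi)^{1/2} r} e^{-r^2/(2n)}$, which is the claimed bound.

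The estimate is essentially routine, so there is no serious obstacle; the only point requiring a little care is the combinatorial step $|x| \geq r \implies \max_i |x_i| \geq r/\sqrt{n}$ (equivalently $\|x\|_\infty \geq \|x\|_2/\sqrt{n}$), which is immediate from $|x|^2 = \sum_i x_i^2 \leq n \max_i x_i^2$. One should also note the bound is only meaningful (and the one-dimensional tail inequality as stated requires) $r > 0$, which is the regime in which it will be applied. I would present the argument in the three short displays above, inserting one line to justify the product-measure factorization $\gamma^n(\{|x_i| \geq t\}) = \gamma^1(\{|s| \geq t\})$.
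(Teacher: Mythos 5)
Your proof is correct and follows essentially the same route as the paper: the paper phrases the reduction via the cube inscribed in $rB$ (which is exactly the set $\{x:\|x\|_\infty\le r/\sqrt n\}$) and derives the one-dimensional tail bound by integration by parts rather than the $s/t\ge 1$ trick, but both steps are equivalent to yours. No issues.
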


\begin{proof}If $t>0$, 
	\begin{align*}
		\gamma^{1}(x>t)
		&=\frac{1}{(2 \pi)^{\frac{1}{2}}}\int_{t}^{+\infty} e^{-\frac{r^{2}}{2}} dr\\
		&=\frac{1}{(2 \pi)^{\frac{1}{2}}}\int_{t}^{+\infty}  \frac{1}{r}\cdot re^{-\frac{r^{2}}{2}} dr\\
		&=\frac{1}{(2 \pi)^{\frac{1}{2}}}\int_{t}^{+\infty}  -\frac{1}{r} d(e^{-\frac{r^{2}}{2}})\\
		&=\frac{1}{(2 \pi)^{\frac{1}{2}}}\left( -\frac{1}{r} e^{-\frac{r^{2}}{2}} \Big|_{t}^{+\infty}   -  \int_{t}^{+\infty}  \frac{1}{r^{2}} e^{-\frac{r^{2}}{2}}dr       \right) \\
		&=\frac{1}{(2 \pi)^{\frac{1}{2}}}\left( \frac{1}{t} e^{-\frac{t^{2}}{2}}    -  \int_{t}^{+\infty}  \frac{1}{r^{2}} e^{-\frac{r^{2}}{2}}dr       \right) \\
		&\le \frac{1}{(2 \pi)^{\frac{1}{2}}}\cdot \frac{1}{t} e^{-\frac{t^{2}}{2}}.  
	\end{align*}
	For a ball $rB$ of radius $r$ in $\mathbb{R}^{n}$, let $P_{r}$ be its inscribed cube. Then, the edge length of $P_{r}$ is $\frac{2r}{\sqrt{n}}$. Since $P_{r}\subset rB$, then $\mathbb{R}^{n}-rB \subset \mathbb{R}^{n}-P_{r}$, we conclude
	\begin{align*}
		\gamma^{n}(\mathbb{R}^{n}-rB)&\le \gamma^{n}(\mathbb{R}^{n}-P_{r})\\
		&\le \gamma^{n}(|x_{1}|>\frac{r}{\sqrt{n}})+\gamma^{n}(|x_{2}|>\frac{r}{\sqrt{n}})+\cdots+\gamma^{n}(|x_{n}|>\frac{r}{\sqrt{n}})\\
		&=2n\gamma^{1}(x>\frac{r}{\sqrt{n}})\\
		&\le 2n \cdot \frac{1}{(2 \pi)^{\frac{1}{2}}}\cdot \frac{1}{\frac{r}{\sqrt{n}}} e^{-\frac{\left(\frac{r}{\sqrt{n}}\right)  ^{2}}{2}}\\
		&=\frac{2n\sqrt{n}}{(2 \pi)^{\frac{1}{2}}r}e^{-\frac{r^{2}}{2n}}.
	\end{align*}
\end{proof}

Next, we hope to transform the $L_{p}$ Gaussian Minkowski problem into an optimization problem. When $p > 0$, the method for solving this problem is similar to \cite{shan2025}, but
with modifications.

We assume $\omega\subset\Omega$ is a nonempty compact subset.
For any nonzero finite Borel measure  $\mu$ on $\omega$,
we define the set $C^{+}(\omega)$ as the collection of continuous functions $f: \omega \to (0, \infty)$. If $p>0$, we introduce a functional $I_{\mu}: C^{+}(\omega) \to (0, \infty)$ given by
$$
I_{\mu}(f) := \gamma^{n}([f]) \int_{\omega} f^{p}  d\mu
$$
for each $f \in C^{+}(\omega)$, where $[f]$ represents the Wulff shape associated with  $(C, \omega, f)$. 

Given $p>0$, the following optimization problem is then considered:
$$\sup \{I_{\mu}(f)=\gamma^{n}([f])\int_{\omega} f^{p} d\mu :f\in C^{+}(\omega)\}.$$ 
Obviously, for any $K\in \mathcal{K}(C,\omega)$, $I_{\mu}(\bar{h}_{K})=\gamma^{n}(K)\int_{\omega} \bar{h}_{K}^{p} d\mu>0$.

If  $I_{\mu}$  attains a maximum at $\bar{h}_{K}$ for some $K\in \mathcal{K}(C,\omega)$, 
for any continuous function $f:\omega\rightarrow \mathbb{R}$,   the function $\left( \bar{h}_{K}^{p}+tf\right) ^{\frac{1}{p}}$ belongs to $C^{+}(\omega)$ for sufficiently small $|t|$.  By Lemma \ref{vari formula}, we obtain
\begin{equation*}
	0=\left.\frac{d}{d t}\right|_{t=0}I_{\mu}(\left( \bar{h}_{K}^{p}+tf\right) ^{\frac{1}{p}})= \gamma^{n}(K)\int_{\omega}fd\mu-\frac{1}{p}\int_{\omega}fdS_{p,\gamma^{n}}(K,\cdot)\int_{\omega}\bar{h}_{K}^{p}d\mu.
\end{equation*}
In other words, 
\begin{equation*}
	\gamma^{n}(K)\int_{\omega}fd\mu=\frac{1}{p}\int_{\omega}fdS_{p,\gamma^{n}}(K,\cdot)\int_{\omega}\bar{h}_{K}^{p}d\mu
\end{equation*}
holds for all continuous functions $f$ on $\omega$. Then we conclude that
\begin{equation}\label{com mink}
\mu=\frac{\int_{\omega}\bar{h}_{K}^{p}d\mu}{p\gamma^{n}(K)}S_{p,\gamma^{n}}(K,\cdot).
\end{equation}
Therefore, in what follows, we will show that $I_{\mu}$ indeed attains its maximum at  support function $\bar{h}_{K}$ for some $K\in \mathcal{K}(C,\omega)$. From the preceding discussion, this is equivalent to $K$ being a solution to the (normalized) $L_{p}$ Gaussian Minkowski problem.
\begin{lemma}\label{compact cone minkowski}
If $p>0$, suppose $\omega\subset\Omega$ is a nonempty compact subset.
For any nonzero finite Borel measure  $\mu$ on $\omega$,  there exists a $C$-pseudo-cone $K\in\mathcal{K}(C,\omega)$ such that
$$\mu=\frac{\int_{\omega}\bar{h}_{K}^{p}d\mu}{p\gamma^{n}(K)}S_{p,\gamma^{n}}(K,\cdot).$$
	
\end{lemma}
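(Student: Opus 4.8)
The plan is to solve the optimization problem $\sup\{I_\mu(f) : f\in C^+(\omega)\}$ by the direct method: produce a maximizing sequence, use the selection theorem to extract a convergent subsequence of the associated Wulff shapes, and verify that the limit is attained at the support function of some $K\in\mathcal{K}(C,\omega)$. Once the maximum is attained at $\bar h_K$, the Euler--Lagrange computation already carried out in the excerpt (via Lemma \ref{vari formula}) gives exactly \eqref{com mink}, so this is the only thing that needs proving.

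First I would reduce to maximizing over Wulff shapes themselves. For any $f\in C^+(\omega)$, the Wulff shape $[f]$ satisfies $\bar h_{[f]}\le f$ on $\omega$ (the support function of $[f]$ is the largest function $\le f$ realizable as a support function restricted to $\omega$), hence $\gamma^n([[f]])=\gamma^n([f])$ while $\int_\omega \bar h_{[f]}^p\,d\mu \le \int_\omega f^p\,d\mu$ since $p>0$; therefore $I_\mu(\bar h_{[f]})\le I_\mu(f)$, and it suffices to take the supremum over $\{\bar h_K : K\in\mathcal{K}(C,\omega)\}$. Next I would show the supremum is finite and positive: positivity is immediate since $I_\mu(\bar h_{K_0})>0$ for any fixed $K_0\in\mathcal{K}(C,\omega)$; finiteness follows because $\gamma^n([f])\le \gamma^n(\mathbb{R}^n)=1$, so $I_\mu(f)\le \int_\omega f^p\,d\mu$ — but this is not yet bounded, so the real content is a \emph{uniform a priori bound} on a maximizing sequence. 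This is where Lemma \ref{decayesti} enters. If $K_j$ is a maximizing sequence with $\operatorname{dist}(o,K_j)=:d_j\to\infty$, then $K_j\subset \mathbb{R}^n\setminus d_jB$, so $\gamma^n(K_j)\le \gamma^n(\mathbb{R}^n\setminus d_jB)\le \frac{2n\sqrt n}{(2\pi)^{1/2}d_j}e^{-d_j^2/(2n)}$, which decays super-exponentially. On the other hand $\bar h_{K_j}(u)\le |y|$ for the point $y\in K_j$ nearest the origin... more carefully, one needs a polynomial-in-$d_j$ upper bound for $\int_\omega \bar h_{K_j}^p\,d\mu$; since $\omega$ is a compact subset of $\operatorname{int}C^\circ$, there is a uniform constant so that $\bar h_{K}(u)\le c_\omega\,\operatorname{dist}(o,K)$ is false in general, but $\bar h_K(u)$ grows at most linearly in the "size" of $K$ along bounded directions — one obtains $\int_\omega \bar h_{K_j}^p\,d\mu\le C(1+d_j)^p$ or similar, using that $K_j\in\mathcal{K}(C,\omega)$ forces control of $h_{K_j}$ on $\omega$ in terms of a single radial value. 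Multiplying, $I_\mu(\bar h_{K_j})\to 0$, contradicting that it is a maximizing sequence for a positive supremum. Hence $d_j$ is bounded above. A lower bound $d_j\ge a>0$ follows because $d_j\to 0$ would force $\bar h_{K_j}\to 0$ uniformly on $\omega$ (as $\bar h_K(u)\le \operatorname{dist}(o,K)\|u\|$... again using the geometry) and thus $\int_\omega\bar h_{K_j}^p\,d\mu\to 0$, again killing $I_\mu$.

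With $a<\operatorname{dist}(o,K_j)<b$, Lemma \ref{select} gives a subsequence $K_{j}\to K$ with $K\in\mathcal{K}(C,\omega)$ after replacing $K$ by $K^{(\omega)}$ (Lemma \ref{compace conv} guarantees $K_j^{(\omega)}\to K^{(\omega)}$, and the $K_j$ are already $C$-determined by $\omega$). Then $\bar h_{K_j}\to\bar h_K$ uniformly on $\omega$ (convergence of $C$-pseudo-cones restricted to a compact $\omega\subset\operatorname{int}C^\circ$ implies uniform convergence of absolute support functions there), so $\int_\omega\bar h_{K_j}^p\,d\mu\to\int_\omega\bar h_K^p\,d\mu$, and $\gamma^n(K_j)\to\gamma^n(K)$ by dominated convergence for the Gaussian density over $K_j^-(t)\to K^-(t)$ together with the uniform tail decay from Lemma \ref{decayesti}. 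Therefore $I_\mu(\bar h_K)=\sup$, the maximum is attained, and the Euler--Lagrange identity \eqref{com mink} finishes the proof.

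\textbf{Main obstacle.} The delicate step is the a priori upper and lower bounds on $\operatorname{dist}(o,K_j)$ along a maximizing sequence: one must pair the super-exponential Gaussian decay of $\gamma^n(K_j)$ (from Lemma \ref{decayesti}) against the at-most-polynomial growth of $\int_\omega\bar h_{K_j}^p\,d\mu$, and separately rule out degeneration $\operatorname{dist}(o,K_j)\to 0$. Making the growth estimate for $\int_\omega\bar h_{K_j}^p\,d\mu$ precise — i.e.\ showing $\bar h_{K_j}$ on the fixed compact $\omega$ is controlled polynomially by $\operatorname{dist}(o,K_j)$ using only $K_j\in\mathcal{K}(C,\omega)$ — is the crux and is exactly the place where the argument departs from the $p=1$ case of \cite{shan2025}.
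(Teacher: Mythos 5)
Your overall strategy is the same as the paper's (maximize $I_\mu$, derive a priori bounds on $\operatorname{dist}(o,K_j)$ from the Gaussian tail estimate of Lemma \ref{decayesti}, extract a convergent subsequence via Lemma \ref{select}, and finish with the Euler--Lagrange identity \eqref{com mink}), but two steps as written are wrong or incomplete. First, the reduction to support functions has the inequality backwards: for the Wulff shape of a $C$-pseudo-cone one has $h_{[f]}(u)\le -f(u)$ for $u\in\omega$, i.e.\ $\bar h_{[f]}\ge f$, not $\bar h_{[f]}\le f$. (Your parenthetical ``largest function $\le f$ realizable as a support function'' is the convex-body picture; here the support function is non-positive and the absolute support function goes the other way.) From your claimed inequality you deduce $I_\mu(\bar h_{[f]})\le I_\mu(f)$, and from that it does \emph{not} follow that the supremum may be restricted to support functions. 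With the correct direction one gets $I_\mu(f)\le I_\mu(\bar h_{[f]})$, since $p>0$ and $\gamma^n([\bar h_{[f]}])=\gamma^n([f])$, which is what actually justifies the reduction; this is how the paper argues.

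Second, the step you single out as the ``main obstacle'' --- a polynomial bound for $\int_\omega\bar h_{K_j}^p\,d\mu$ in terms of $r_j:=\operatorname{dist}(o,K_j)$ --- is left as a hand-wave (``one obtains \dots or similar''), and you even assert that $\bar h_K(u)\le c_\omega\operatorname{dist}(o,K)$ ``is false in general.'' In fact it holds with constant $1$ by a one-line argument: for any $x\in K$ and $u\in\Omega$ one has $h_K(u)\ge\langle x,u\rangle\ge-|x|$, hence $\bar h_K(u)\le|x|$, and minimizing over $x\in K$ gives $\bar h_K(u)\le\operatorname{dist}(o,K)=r_j$. This yields $\int_\omega\bar h_{K_j}^p\,d\mu\le\mu(\omega)\,r_j^p$, which paired with $\gamma^n(K_j)\le\gamma^n(\mathbb{R}^n\setminus r_jB)\le\frac{c}{r_j}e^{-r_j^2/(2n)}$ forces $I_\mu(\bar h_{K_j})\to0$ if $r_j\to\infty$, while the same bound with $r_j^p\to0$ handles $r_j\to0$; this is exactly the paper's estimate. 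So the gap you flag is real in your write-up but trivial to close, whereas the sign error in the Wulff-shape reduction is the part that genuinely needs correcting; the remainder of your argument (selection, continuity of $I_\mu$ along the convergent subsequence, and the variational identity) matches the paper.
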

\begin{proof}
	
	For any $f\in C^{+}(\omega)$,  we have $\bar{h}_{[f]}\ge f$. Consequently,
	$$I_{\mu}(f)=\gamma^{n}([f])\int_{\omega} f^{p} d\mu\le \gamma^{n}([f])\int_{\omega}\bar{h}_{[f]}^{p} d\mu=I_{\mu}(\bar{h}_{[f]} ).$$
	This  shows that the supremum of $I_{\mu}(f)$ is attained by the support functions of $\mathcal{K}(C,\omega)$.
	Given a sequence  $\{\bar{h}_{K_{i}}\}$, where each $K_i \in \mathcal{K}(C, \omega)$, satisfying
$$\lim _{i \rightarrow \infty} I_{\mu}(\bar{h}_{K_{i}})=\sup \left\{I_{\mu}(f): f \in C^{+}\left(\omega\right)\right\}.$$
Define
$$r_{i}=\min\{r:rB\cap K_{i}\neq\emptyset \},$$
then we have
$$\bar{h}_{K_{i}}\le r_{i}.$$
and
$$K_{i}\subset \mathbb{R}^{n}-r_{i}B.$$
If there exists a subsequence, still denoted by $r_{i}$, such that $r_{i}\rightarrow\infty$,   this leads to the estimate by Lemma \ref{decayesti}:
\begin{align*}
	I_{\mu}(\bar{h}_{K_{i}})&=\gamma^{n}(K_{i})\int_{\omega}\bar{h}_{K_{i}}^{p}  d\mu\\
	&\le \gamma^{n}(\mathbb{R}^{n}-r_{i}B)\mu(\omega) r_{i}^{p}\\
	&\le\frac{c}{r_{i}}e^{-\frac{r_{i}^{2}}{2n}}\cdot r_{i}^{p}\rightarrow 0,
\end{align*}
for some constant $c$. This contradicts the fact that $\lim _{i \rightarrow \infty} I_{\mu}(\bar{h}_{K_{i}})=\sup I_{\mu}$, since $I_{\mu}(\bar{h}_{K})>0$ for each $K\in \mathcal{K}(C,\omega)$.

Moreover, assume that $r_{i}\rightarrow 0$,
then we have
$$\gamma^{n}(\mathbb{R}^{n}-r_{i}B)\rightarrow 1,$$
and through  the same estimate, we obtain
\begin{align*}
	I_{\mu}(\bar{h}_{K_{i}})&=\gamma^{n}(K_{i})\int_{\omega}\bar{h}_{K_{i}}^{p}  d\mu\\
	&\le\gamma^{n}(\mathbb{R}^{n}-r_{i}B)\mu(\omega) r_{i}^{p}\rightarrow 0.
\end{align*}
Similarly, we arrive at a contradiction.

This implies the uniform estimate $0<c_{1}<\operatorname{dist}(o,\partial K_{i})<c_{2}$, for some constants $c_{1}$ and $c_{2}$. From Lemma \ref{select}, we can find a  subsequence $K_{i_{j}}$, such that $K_{i_{j}}\rightarrow K$
for some $K\in\mathcal{K}(C,\omega)$. Consequently, we get $\lim _{i \rightarrow \infty} I_{\mu}(\bar{h}_{K_{i_{j}}})=I_{\mu}(\bar{h}_{K})=\sup I_{\mu}(f)$.   By the preceding discussion \eqref{com mink},
the result is proved.

\end{proof}

If $\mu$ is a nonzero Borel measure on $\Omega$, $p>0$, we may approximate $\Omega$ by an increasing sequence of compact sets $\{\omega_i\}$, i.e., $\omega_i \subset \operatorname{int}\omega_{i+1}$ and $\bigcup_{i \in \mathbb{N}} \omega_i = \Omega$. For each $i \in \mathbb{N}$, denote  $\mu_i = \mu \llcorner \omega_i$ as 
\[
\mu \llcorner \omega_i(\sigma) := \mu(\sigma \cap \omega_i)
\]  
for any Borel subset $\sigma \subset \Omega$. We further require that $\mu_0 = \mu \llcorner \omega_0$ is a nonzero measure. By Lemma \ref{compact cone minkowski}, for each $i \in \mathbb{N}$, there exists a $C$-pseudo-cone $K_i \in \mathcal{K}(C, \omega_i)$ satisfying  
\begin{equation}\label{Ki}
\mu_{i}=\frac{\int_{\omega_{i}}\bar{h}_{K_{i}}^{p}d\mu_{i}}{p\gamma^{n}(K_{i})}S_{p,\gamma^{n}}(K_{i},\cdot).
\end{equation}
To establish the existence of  solutions to the $L_{p}$ Gaussian Minkowski problem on $\Omega$, it remains to show that the sets $\{K_i\}$ are uniformly bounded, i.e., there exist positive uniform lower and upper bounds for $  \operatorname{dist}(o, \partial K_i) $. This uniform boundedness will allow us to extract a convergent subsequence, ultimately yielding the desired solution on $\Omega$ via an approximation argument.
The  uniform estimate as follows.

\begin{lemma}\label{uniesti}
For the sequence $\{K_{i}\}$ in \eqref{Ki}, there exist two constant $m$ and $M$ such that
$$0<m<\operatorname{dist}(o, \partial K_i) <M$$
for all $i\in\mathbb{N}$.
\end{lemma}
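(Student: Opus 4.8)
The plan is to distill from the normalized equation \eqref{Ki} a single scaling identity that pins down $r_i:=\operatorname{dist}(o,\partial K_i)$ both from above and from below. Write $A_i=\int_{\omega_i}\bar h_{K_i}^{p}\,d\mu_i$, which is strictly positive since $\bar h_{K_i}>0$ on $\omega_i\subset\Omega$ and $\mu_i\ge\mu_0\neq 0$. First I would integrate \eqref{Ki} against the function $\bar h_{K_i}^{p}$ over $\omega_i$; the left side is $A_i$ and the right side is $\frac{A_i}{p\gamma^{n}(K_i)}\int_{\omega_i}\bar h_{K_i}^{p}\,dS_{p,\gamma^{n}}(K_i,\cdot)$, so after cancelling $A_i$,
\begin{equation*}
\int_{\omega_i}\bar h_{K_i}^{p}\,dS_{p,\gamma^{n}}(K_i,\cdot)=p\,\gamma^{n}(K_i).
\end{equation*}
On the other hand, Lemma \ref{vari formula} with $f=\bar h_{K_i}^{p}$ applies, since then $\left[(\bar h_{K_i}^{p}+tf)^{1/p}\right]=\left[(1+t)^{1/p}\bar h_{K_i}\right]=(1+t)^{1/p}K_i$ (using $K_i\in\mathcal{K}(C,\omega_i)$), which is an admissible Wulff shape for $|t|$ small; note this perturbation keeps everything supported on $\omega_i$, so no extra support claim for $S_{p,\gamma^{n}}(K_i,\cdot)$ is needed. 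Computing the left-hand side of that formula directly from $\gamma^{n}(sK_i)=\frac{s^{n}}{(2\pi)^{n/2}}\int_{K_i}e^{-s^{2}|y|^{2}/2}\,dy$ by the chain rule at $s=1$ (with $s=(1+t)^{1/p}$, $\tfrac{ds}{dt}\big|_{0}=\tfrac1p$) gives $\frac{d}{dt}\gamma^{n}((1+t)^{1/p}K_i)\big|_{0}=\frac1p\big(n\gamma^{n}(K_i)-\frac{1}{(2\pi)^{n/2}}\int_{K_i}|y|^{2}e^{-|y|^{2}/2}\,dy\big)$, while by the variational formula it equals $-\frac1p\int_{\omega_i}\bar h_{K_i}^{p}\,dS_{p,\gamma^{n}}(K_i,\cdot)=-\gamma^{n}(K_i)$. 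Combining yields the key identity
\begin{equation*}
\frac{1}{(2\pi)^{n/2}}\int_{K_i}|y|^{2}e^{-|y|^{2}/2}\,dy=(n+p)\,\gamma^{n}(K_i)\qquad\text{for every }i.
\end{equation*}

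With this identity the two bounds are essentially immediate. For the upper bound: since $o\notin K_i$ we have $K_i\subseteq\{y:|y|\ge r_i\}$, so the left side of the identity is $\ge r_i^{2}\gamma^{n}(K_i)$; as $K_i$ is an $n$-dimensional $C$-pseudo-cone, $\gamma^{n}(K_i)>0$, and dividing gives $r_i^{2}\le n+p$, so $r_i\le M:=\sqrt{n+p}$ for all $i$. For the lower bound, suppose for contradiction that $r_{i_k}\to 0$ along a subsequence. Choosing $y_k\in K_{i_k}$ with $|y_k|=r_{i_k}$, the condition $\operatorname{rec}K_{i_k}=C$ gives $y_k+C\subseteq K_{i_k}\subseteq C$. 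Since $y_k\to o$, dominated convergence (with integrable weights $e^{-|y|^{2}/2}$ and $|y|^{2}e^{-|y|^{2}/2}$) shows $\gamma^{n}(y_k+C)\to\gamma^{n}(C)$ and $\frac{1}{(2\pi)^{n/2}}\int_{y_k+C}|y|^{2}e^{-|y|^{2}/2}\,dy\to\frac{1}{(2\pi)^{n/2}}\int_{C}|y|^{2}e^{-|y|^{2}/2}\,dy$; squeezing $K_{i_k}$ between $y_k+C$ and $C$ then forces $\gamma^{n}(K_{i_k})\to\gamma^{n}(C)>0$ and $\frac{1}{(2\pi)^{n/2}}\int_{K_{i_k}}|y|^{2}e^{-|y|^{2}/2}\,dy\to\frac{1}{(2\pi)^{n/2}}\int_{C}|y|^{2}e^{-|y|^{2}/2}\,dy$. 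A spherical-coordinate computation ($\int_0^{\infty}r^{n+1}e^{-r^{2}/2}\,dr=n\int_0^{\infty}r^{n-1}e^{-r^{2}/2}\,dr$) evaluates this last limit as $n\gamma^{n}(C)$. But the key identity forces it to equal $\lim_k(n+p)\gamma^{n}(K_{i_k})=(n+p)\gamma^{n}(C)$, so $n\gamma^{n}(C)=(n+p)\gamma^{n}(C)$, impossible since $p>0$ and $\gamma^{n}(C)>0$. Hence $\inf_i r_i=:m'>0$, and with $m:=m'/2$ we get $0<m<\operatorname{dist}(o,\partial K_i)<M$ for all $i$.

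The main obstacle is the first paragraph: setting up and verifying the scaling identity, in particular identifying the perturbation $f=\bar h_{K_i}^{p}$ in Lemma \ref{vari formula} with the dilation $t\mapsto(1+t)^{1/p}K_i$ and carrying out the direct differentiation of $\gamma^{n}(sK_i)$ so that the variational formula and the integrated equation \eqref{Ki} can be matched; everything afterwards is routine. As an alternative to the identity argument, one can instead exploit the fact that, in the proof of Lemma \ref{compact cone minkowski}, each $K_i$ is a maximizer of $I_{\mu_i}$ on $C^{+}(\omega_i)$: a fixed test function $\bar h_{L}|_{\omega_i}$ coming from a fixed $C$-pseudo-cone $L$ (with $\mu_0\llcorner\omega_0\neq 0$) gives $\sup I_{\mu_i}\ge I_{\mu_i}(\bar h_{L}|_{\omega_i})\ge\delta_0>0$ uniformly in $i$, whereas $r_i\to\infty$ would force $I_{\mu_i}(\bar h_{K_i})\le\gamma^{n}(\mathbb{R}^{n}-r_iB)\,\mu(\Omega)\,r_i^{p}\to 0$ by Lemma \ref{decayesti} (using $\bar h_{K_i}\le r_i$ and $K_i\subseteq\mathbb{R}^{n}-r_iB$), and $r_i\to 0$ would force $I_{\mu_i}(\bar h_{K_i})\le\mu(\Omega)\,r_i^{p}\to 0$, each a contradiction.
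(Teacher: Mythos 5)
Your proposal is correct, and your primary argument is genuinely different from the paper's. The paper proves this lemma by exploiting that each $K_i$ is a \emph{maximizer} of $I_{\mu_i}$: since $\mu_i\le\mu_{i+1}$ and $\mathcal{K}(C,\omega_i)\subset\mathcal{K}(C,\omega_{i+1})$, the optimal values satisfy $0<I_{\mu_0}(\bar h_{K_0})\le I_{\mu_1}(\bar h_{K_1})\le\cdots$, giving a uniform positive lower bound on $I_{\mu_i}(\bar h_{K_i})$, after which $r_i\to\infty$ is excluded by Lemma \ref{decayesti} and $r_i\to 0$ by $r_i^{p}\to 0$ --- this is essentially the ``alternative'' you sketch in your last paragraph (with a fixed test function in place of monotonicity), and that version is also valid. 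Your main route instead uses only the Euler--Lagrange equation \eqref{Ki}: testing it against $\bar h_{K_i}^{p}$ gives $\int_{\omega_i}\bar h_{K_i}^{p}\,dS_{p,\gamma^{n}}(K_i,\cdot)=p\,\gamma^{n}(K_i)$, and identifying the perturbation $f=\bar h_{K_i}^{p}$ in Lemma \ref{vari formula} with the dilation $t\mapsto(1+t)^{1/p}K_i$ (legitimate, since $[s\bar h_{K_i}|_{\omega_i}]=sK_i$ for $K_i\in\mathcal{K}(C,\omega_i)$ and $sC=C$) yields the moment identity $\frac{1}{(2\pi)^{n/2}}\int_{K_i}|y|^{2}e^{-|y|^{2}/2}\,dy=(n+p)\gamma^{n}(K_i)$; I verified the sign and the limiting value $n\gamma^{n}(C)$ for the cone, and both the explicit upper bound $r_i\le\sqrt{n+p}$ and the contradiction $n\gamma^{n}(C)=(n+p)\gamma^{n}(C)$ for $r_{i_k}\to 0$ are sound. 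What your approach buys: it applies to \emph{any} sequence of solutions of the normalized equation, not only to the constructed maximizers, it gives a quantitative, $\mu$-independent upper bound, and it does not need the monotonicity of the optimal values; the paper's approach is shorter and reuses machinery already in place. Two cosmetic points: take $M=\sqrt{n+p}+1$ to get the strict inequality claimed in the statement, and note that the squeeze $y_k+C\subseteq K_{i_k}\subseteq C$ uses the standard fact (Schneider) that a $C$-pseudo-cone is contained in $C$, which the paper also uses implicitly.
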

\begin{proof}
	Recall the optimization problems 
	$$I_{\mu_{i}}(\bar{h}_{K_{i}})=\sup \left\{I_{\mu_{i}}(f)=\gamma^{n}([f])\int_{\omega_{i}} f^{p} d\mu_{i}: f \in C^{+}\left(\omega_{i}\right)\right\}.$$
	Since $\mu_{i}\le\mu_{i+1}\le\mu$, $\omega_{i}\subset \operatorname{int}\omega_{i+1}$ and $\mathcal{K}(C,\omega_{i})\subset\mathcal{K}(C,\omega_{i+1})$, we get
\begin{align*}
	0<I_{\mu_{i}}(\bar{h}_{K_{i}})=&\gamma^{n}([\bar{h}_{K_{i}}|_{\omega_{i}}])\int_{\omega_{i}}\bar{h}_{K_{i}}^{p}  d\mu_{i}\\
	&=\gamma^{n}([\bar{h}_{K_{i}}|_{\omega_{i+1}}])\int_{\omega_{i}}\bar{h}_{K_{i}}^{p}  d\mu_{i}\\
	&\le \gamma^{n}([\bar{h}_{K_{i}}|_{\omega_{i+1}}])\int_{\omega_{i+1}}\bar{h}_{K_{i}}^{p}  d\mu_{i+1}\\&\le I_{\mu_{i+1}}(\bar{h}_{K_{i+1}}).
\end{align*}
If we set 
$$r_{i}=\min\{r:rB\cap K_{i}\neq\emptyset \},$$
  Then we also have
$$\bar{h}_{K_{i}}\le r_{i}\quad\text{and}\quad K_{i}\subset \mathbb{R}^{n}-r_{i}B$$
as Lemma \ref{compact cone minkowski}. If $r_{i}\rightarrow\infty$,  	 then
\begin{align*}
	I_{\mu_{i}}(\bar{h}_{K_{i}})=&\gamma^{n}(K_{i})\int_{\omega_{i}}\bar{h}_{K_{i}}^{p}  d\mu_{i}\\
	&\le \gamma^{n}(\mathbb{R}^{n}-r_{i}B)\mu_{i}(\omega_{i}) r_{i}^{p}\\
	&\le \gamma^{n}(\mathbb{R}^{n}-r_{i}B)\mu(\Omega) r_{i}^{p}\\
	&\le\frac{d}{r_{i}}e^{-\frac{r_{i}^{2}}{2n}}\cdot r_{i}^{p}\rightarrow 0,
\end{align*}
for some constant $d$.
This contradicts the monotonicity property of $I_{\mu_{i}}(\bar{h}_{K_{i}})$, namely  $	0<I_{\mu_{i}}(\bar{h}_{K_{i}})\le I_{\mu_{i+1}}(\bar{h}_{K_{i+1}})\le \cdots$.  Meanwhile, if $r_{i}\rightarrow0$, then
\begin{align*}
	I_{\mu_{i}}(\bar{h}_{K_{i}})&=\gamma^{n}(K_{i})\int_{\omega_{i}}\bar{h}_{K_{i}}^{p}  d\mu_{i}\\
	&\le\gamma^{n}(\mathbb{R}^{n}-r_{i}B)\mu(\Omega) r_{i}^{p}\rightarrow 0,
\end{align*}
this also arrive at a contradiction.
\end{proof}

By the uniform estimate and standard approximation argument, the (normalized) $L_{p}$ Gaussian Minkowski  theorem can be established.
\begin{theorem}\label{ minkowskip>0}
	If $p>0$, 
	for any nonzero finite Borel measure  $\mu$ on $\Omega$,  there exists a $C$-pseudo-cone $K$ such that
	$$\mu=\frac{\int_{\Omega}\bar{h}_{K}^{p}d\mu}{p\gamma^{n}(K)}S_{p,\gamma^{n}}(K,\cdot).$$
	
\end{theorem}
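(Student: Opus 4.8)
\textbf{Proof plan for Theorem \ref{ minkowskip>0}.} The plan is to run the standard exhaustion argument: write $\Omega$ as the union of an increasing sequence of compact sets $\omega_i$ with $\omega_i\subset\operatorname{int}\omega_{i+1}$, normalised so that $\mu_0=\mu\llcorner\omega_0$ is already nonzero, and apply Lemma \ref{compact cone minkowski} to the truncated measures $\mu_i=\mu\llcorner\omega_i$. This produces, for each $i$, a $C$-pseudo-cone $K_i\in\mathcal{K}(C,\omega_i)$ solving the normalised problem \eqref{Ki}. By Lemma \ref{uniesti} the quantities $\operatorname{dist}(o,\partial K_i)$ are trapped between two positive constants $m$ and $M$ independent of $i$, so Lemma \ref{select} yields a subsequence $K_{i_j}\to K$ converging to some pseudo-cone $K$; the uniform bound guarantees $K$ is a genuine $C$-pseudo-cone with $0<m\le\operatorname{dist}(o,\partial K)\le M$.

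Next I would pass to the limit in the relation \eqref{Ki}. Fix a nonempty compact set $\omega\subset\Omega$; then $\omega\subset\omega_{i_j}$ for all large $j$, and on $\omega$ the measures $S_{p,\gamma^n}(K_{i_j},\cdot)$ converge weakly to $S_{p,\gamma^n}(K,\cdot)$ by Lemma \ref{weal conve} (the truncation $\mathcal{K}(C,\omega_{i_j})$ does not affect weak convergence on the fixed $\omega$, since the support functions $\bar h_{K_{i_j}}$ converge uniformly on $\omega$ by the definition of convergence of pseudo-cones). Likewise $\mu_{i_j}\llcorner\omega=\mu\llcorner\omega$ for large $j$, and $\gamma^n(K_{i_j})\to\gamma^n(K)$ because the decay estimate of Lemma \ref{decayesti} together with the uniform lower bound $\operatorname{dist}(o,\partial K_{i_j})\ge m$ makes the Gaussian measures of $C\setminus K_{i_j}$ converge (alternatively one uses $\gamma^n(K_{i_j})=\gamma^n(C)-\gamma^n(C\setminus K_{i_j})$ and monotone/dominated convergence on the decomposition $C\setminus K_{i_j}=\bigcup_t(C^-(t)\setminus K_{i_j}^-(t))$). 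Finally, the scalar $\int_{\omega_{i_j}}\bar h_{K_{i_j}}^p\,d\mu_{i_j}$ converges to $\int_\Omega\bar h_K^p\,d\mu$: on each fixed $\omega$ this is the uniform convergence of $\bar h_{K_{i_j}}^p$ against the finite measure $\mu\llcorner\omega$, and the tails are controlled since $\bar h_{K_{i_j}}\le M$ uniformly, so $\int_{\omega_{i_j}\setminus\omega}\bar h_{K_{i_j}}^p\,d\mu_{i_j}\le M^p\mu(\Omega\setminus\omega)\to 0$ as $\omega\uparrow\Omega$; combining, $c_{i_j}:=\int_{\omega_{i_j}}\bar h_{K_{i_j}}^p\,d\mu_{i_j}/\big(p\gamma^n(K_{i_j})\big)\to c:=\int_\Omega\bar h_K^p\,d\mu/\big(p\gamma^n(K)\big)$, which is a positive finite number because $\gamma^n(K)>0$ and $\bar h_K\ge m>0$ forces $\int_\Omega\bar h_K^p\,d\mu\ge m^p\mu(\Omega)>0$ (here I use $p>0$).

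Putting these together: for every nonempty compact $\omega\subset\Omega$ and every continuous $g$ supported in $\omega$,
\begin{equation*}
\int_\Omega g\,d\mu=\lim_j\int_{\omega_{i_j}}g\,d\mu_{i_j}=\lim_j c_{i_j}\int_{\omega_{i_j}}g\,dS_{p,\gamma^n}(K_{i_j},\cdot)=c\int_\Omega g\,dS_{p,\gamma^n}(K,\cdot),
\end{equation*}
and since such $g$ separate finite Borel measures on $\Omega$, we conclude $\mu=c\,S_{p,\gamma^n}(K,\cdot)$, which is exactly the asserted normalised equation with $c=\int_\Omega\bar h_K^p\,d\mu/\big(p\gamma^n(K)\big)$.

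\textbf{Main obstacle.} The routine part is the weak-convergence bookkeeping; the delicate point is the interchange of limits at the boundary of $\Omega$, i.e.\ showing that no mass of $S_{p,\gamma^n}(K_{i_j},\cdot)$ or of the constants $c_{i_j}$ escapes toward $\partial\Omega=S^{n-1}\cap\partial C^\circ$ as both $j\to\infty$ and $\omega\uparrow\Omega$. This is handled precisely by the two-sided uniform bound of Lemma \ref{uniesti}: the upper bound $\bar h_{K_{i_j}}\le M$ controls the $\bar h^p$-integrals uniformly (using $p>0$, so that $\bar h^p$ does not blow up where $\bar h$ is bounded), and the lower bound $\operatorname{dist}(o,\partial K_{i_j})\ge m$ keeps $\gamma^n(K_{i_j})$ bounded away from $0$, so the normalising constants stay in a fixed compact subinterval of $(0,\infty)$ and the limit measure is genuinely nonzero. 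Once uniform boundedness is in hand, the rest is a standard diagonal/approximation argument.
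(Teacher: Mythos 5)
Your proposal is correct and follows essentially the same route as the paper: exhaust $\Omega$ by compacts $\omega_i$, solve on each via Lemma \ref{compact cone minkowski}, use the uniform distance estimate of Lemma \ref{uniesti} to extract a convergent subsequence by Lemma \ref{select}, and pass to the limit in \eqref{Ki} by weak convergence (the paper makes the restriction-to-$\omega_i$ step rigorous via the sets $K^{(\beta)}$ and Lemma \ref{compace conv}, which is the only bookkeeping detail you treat more informally). One small inaccuracy: the uniform bound $\operatorname{dist}(o,\partial K)\ge m$ does not give $\bar h_K\ge m$ on all of $\Omega$ (the absolute support function can decay to $0$ near $\partial\Omega$), but this is harmless since $\int_\Omega\bar h_K^p\,d\mu>0$ already follows from $\bar h_K>0$ pointwise on $\Omega$ and $\mu\neq 0$.
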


\begin{proof}
	For the sequence $\{K_{i}\}$ in \eqref{Ki} and the uniform estimate of $\operatorname{dist}(o, \partial K_i)$, we can extract a convergent subsequence
	   $K_{i}\rightarrow K$.  
	 
	Fix  $i\in\mathbb{N}$, choose a nonempty compact set $\beta\subset\Omega$ with $\omega_{i}\subset\operatorname{int}\beta$. Recall the set $K^{(\beta)}\in\mathcal{K}(C,\beta)$ is given by $$K^{(\beta)}:=C \cap \bigcap_{u \in \beta}H_{K}^{-}(u).$$
	 By Lemma 13 in \cite{schneider weighted}, we have
	$$\nu_{K_{j}}^{-1}(\omega_{i})=\nu_{K_{j}^{(\beta)}}^{-1}(\omega_{i}),\quad \nu_{K}^{-1}(\omega_{i})=\nu_{K^{(\beta)}}^{-1}(\omega_{i}),$$
	Moreover, by Lemma \ref{compace conv}, it follows that
	$$K_{i}^{(\beta)}\rightarrow K^{(\beta)}.$$
	From Lemma \ref{weal  conve},  the measure $S_{p,\gamma^{n}}(K_{j})\llcorner\omega_{i}$ converges weakly to $S_{p,\gamma^{n}}(K^{(\beta)})\llcorner\omega_{i}$. Consequently, we obtain
	$$c_{j}S_{p,\gamma^{n}}(K_{j},\cdot)\llcorner\omega_{i}\xrightarrow{w}cS_{p,\gamma^{n}}(K,\cdot)\llcorner\omega_{i},$$
	where
	$$c_{i}=\frac{\int_{\omega_{i}}\bar{h}_{K_{i}}^{p}d\mu_{i}}{p\gamma^{n}(K_{i})}\quad\text{and}\quad c=\frac{\int_{\Omega}\bar{h}_{K}^{p}d\mu}{p\gamma^{n}(K)}.$$
	Since $\mu\llcorner\omega_{i}=c_{i}S_{p,\gamma^{n}}(K_{i},\cdot)\llcorner\omega_{i}$,   the equality $$c S_{p,\gamma^{n}}(K,\sigma)=c S_{p,\gamma^{n}}(K^{(\beta)},\sigma)=\mu_{i}(\sigma)=\mu(\sigma)$$ holds  for every Borel subset $\sigma\subset\omega_{i}$. Furthermore,
	since $\cup_{j\in\mathbb{N}} \omega_{j}=\Omega$, we deduce that 
	$cS_{p,\gamma^{n}}(K,\cdot)=\mu$.
\end{proof}

\begin{remark}\label{reason}
	For $p<0$, the corresponding optimization problems 
	\begin{equation}\label{re1}
	\sup \{I_{\mu}(f)=\gamma^{n}([f])\int_{\omega} f^{p} d\mu :f\in C^{+}(\omega)\}
	\end{equation}
	or
		\begin{equation}\label{re2}
		\inf \{I_{\mu}(f)=\gamma^{n}([f])\int_{\omega} f^{p} d\mu :f\in C^{+}(\omega)\}
	\end{equation}
	are both invalid for the following reasons:
	
	(i) For \eqref{re1}, since $p<0$, $f^{p}\ge \bar{h}_{[f]}^{p}$, in other words, $I_{\mu}(f)\ge I_{\mu}(\bar{h}_{[f]})$.
	
	(ii) For \eqref{re2}, if the function $f$ sufficiently big, $f\rightarrow+\infty$, then $\gamma^{n}([f])\rightarrow 0$ and $\int_{\omega} f^{p} d\mu\rightarrow 0$. Therefore, $\inf  I_{\mu}(f)=0$, the variational argument fails.
	
\end{remark}

\section{Existence of solutions when $p<0$}
If $p<0$,  for a nonempty compact subset $\omega\subset\Omega$ and any nonzero Borel measure $\mu$ on $\omega$, unlike the case for $p > 0$ (see Remark \ref{reason}), we consider a new variational functional $\phi_{\mu}: C^{+}(\omega) \to (0, \infty)$ given by
$$
\phi_{\mu}(f) := \frac{\gamma^{n}([f])}{\int_{\omega} f^{p}  d\mu} 
$$
and the following optimization problem:
$$\sup \{\phi_{\mu}(f)=\frac{\gamma^{n}([f])}{\int_{\omega} f^{p}  d\mu} :f\in C^{+}(\omega)\}.$$ 
Obviously, 	for any $f\in C^{+}(\omega)$,  we have $\bar{h}_{[f]}^{p}\le f^{p}$. Then
$$\phi_{\mu}(f)=\frac{\gamma^{n}([f])}{\int_{\omega} f^{p}  d\mu}\le \frac{\gamma^{n}([f])}{\int_{\omega}\bar{h}_{[f]} ^{p}  d\mu}=\phi_{\mu}(\bar{h}_{[f]} ).$$ 
And for each $K\in \mathcal{K}(C,\omega)$, $\phi_{\mu}(\bar{h}_{K})= \frac{\gamma^{n}(K)}{\int_{\omega}\bar{h}_{K} ^{p}  d\mu}>0$.

If  $\phi_{\mu}$  attains a maximum at $\bar{h}_{K}$ for some $K\in \mathcal{K}(C,\omega)$, 
for any continuous function $f:\omega\rightarrow \mathbb{R}$,   the function $\left( \bar{h}_{K}^{p}+tf\right) ^{\frac{1}{p}}$ belongs to $C^{+}(\omega)$ for sufficiently small $|t|$.  By Lemma \ref{vari formula}, we obtain
\begin{equation*}
	0=\left.\frac{d}{d t}\right|_{t=0}\phi_{\mu}(\left( \bar{h}_{K}^{p}+tf\right) ^{\frac{1}{p}})= \dfrac{-\frac{1}{p}\int_{\omega}fdS_{p,\gamma^{n}}(K,\cdot)\int_{\omega}\bar{h}_{K}^{p}d\mu-\gamma^{n}(K)\int_{\omega}fd\mu}{\left( \int_{\omega}\bar{h}_{K} ^{p}  d\mu\right) ^{2}},
\end{equation*}
is equivalent to
\begin{equation*}
	\gamma^{n}(K)\int_{\omega}fd\mu=-\frac{1}{p}\int_{\omega}fdS_{p,\gamma^{n}}(K,\cdot)\int_{\omega}\bar{h}_{K}^{p}d\mu
\end{equation*}
holds for all continuous functions $f$ on $\omega$. Then we obtain
\begin{equation}\label{comp<0}
	\mu=-\frac{1}{p}\frac{\int_{\omega}\bar{h}_{K}^{p}d\mu}{\gamma^{n}(K)}S_{p,\gamma^{n}}(K,\cdot).
\end{equation}
As the case $p>0$,  we will also show that $\phi_{\mu}$ indeed attains its maximum at  support function $\bar{h}_{K}$ for some $K\in \mathcal{K}(C,\omega)$. 
\begin{lemma}\label{minkowski comp<0}
	If $p<0$, suppose $\omega\subset\Omega$ is a nonempty compact subset.
	For any nonzero finite Borel measure  $\mu$ on $\omega$,  there exists a $C$-pseudo-cone $K\in\mathcal{K}(C,\omega)$ such that
	$$\mu=-\frac{\int_{\omega}\bar{h}_{K}^{p}d\mu}{p\gamma^{n}(K)}S_{p,\gamma^{n}}(K,\cdot).$$
	
\end{lemma}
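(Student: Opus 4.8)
The plan is to mimic the variational argument used for $p>0$ in Lemma~\ref{compact cone minkowski}, but now working with the functional $\phi_{\mu}$ and its supremum over $C^{+}(\omega)$. Since the discussion preceding the statement already reduces the problem to showing that $\phi_{\mu}$ attains its maximum at $\bar{h}_{K}$ for some $K\in\mathcal{K}(C,\omega)$ (the Euler--Lagrange computation via Lemma~\ref{vari formula} then yields exactly \eqref{comp<0}), the entire task is an existence-of-maximizer argument. First I would observe, as already noted, that $\phi_{\mu}(f)\le\phi_{\mu}(\bar{h}_{[f]})$, so the supremum is realized along support functions of members of $\mathcal{K}(C,\omega)$; hence it suffices to take a maximizing sequence $\{\bar{h}_{K_{i}}\}$ with $K_{i}\in\mathcal{K}(C,\omega)$ and $\phi_{\mu}(\bar{h}_{K_{i}})\to\sup\{\phi_{\mu}(f):f\in C^{+}(\omega)\}$.

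Next I would set $r_{i}=\min\{r:rB\cap K_{i}\neq\emptyset\}$, so that $\bar{h}_{K_{i}}\le r_{i}$ and $K_{i}\subset\mathbb{R}^{n}\setminus r_{i}B$, and rule out $r_{i}\to\infty$ and $r_{i}\to 0$. The key point is that the supremum of $\phi_{\mu}$ is a finite positive number: it is positive because $\phi_{\mu}(\bar{h}_{K})>0$ for any fixed $K\in\mathcal{K}(C,\omega)$, and it is finite because $\gamma^{n}([f])\le\gamma^{n}(C)<\infty$ while $\int_{\omega}f^{p}\,d\mu$ is bounded below away from zero along the maximizing sequence. If $r_{i}\to\infty$, then $f=\bar{h}_{K_{i}}\le r_{i}$ forces, since $p<0$, $\int_{\omega}\bar{h}_{K_{i}}^{p}\,d\mu\ge r_{i}^{p}\mu(\omega)$, and combining with Lemma~\ref{decayesti} gives
\begin{equation*}
\phi_{\mu}(\bar{h}_{K_{i}})=\frac{\gamma^{n}(K_{i})}{\int_{\omega}\bar{h}_{K_{i}}^{p}\,d\mu}\le\frac{\gamma^{n}(\mathbb{R}^{n}\setminus r_{i}B)}{r_{i}^{p}\mu(\omega)}\le\frac{1}{\mu(\omega)}\cdot\frac{2n\sqrt{n}}{(2\pi)^{1/2}r_{i}}e^{-r_{i}^{2}/(2n)}r_{i}^{-p}\to 0,
\end{equation*}
noting $-p>0$ so $r_{i}^{-p}\to\infty$ but the Gaussian term dominates; this contradicts positivity of $\sup\phi_{\mu}$. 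For $r_{i}\to 0$: here $\gamma^{n}(K_{i})\to 0$ (since $K_{i}$ is pushed arbitrarily close to the origin inside a fixed cone and $\gamma^{n}(C\cap r_{i}B\text{-neighborhood})\to 0$, or more directly $\gamma^{n}(K_{i})\le\gamma^{n}(C\setminus K_{i}^{c})$ estimated by the volume of the thin shell); meanwhile I must show $\int_{\omega}\bar{h}_{K_{i}}^{p}\,d\mu$ does not blow up fast enough to compensate, using a uniform upper bound $\bar{h}_{K_{i}}\le c_{2}$ coming from $\omega$ being compactly contained in $\operatorname{int}C^{\circ}$ (so that $\bar{h}_{K_{i}}^{p}\ge c_{2}^{p}\mu(\omega)>0$ since $p<0$), which keeps the denominator bounded below and again forces $\phi_{\mu}(\bar{h}_{K_{i}})\to 0$, a contradiction.

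Having excluded both degenerations I obtain uniform constants $0<c_{1}<\operatorname{dist}(o,\partial K_{i})<c_{2}$, so Lemma~\ref{select} provides a subsequence $K_{i_{j}}\to K$ with $K\in\mathcal{K}(C,\omega)$ (the limit stays in $\mathcal{K}(C,\omega)$ by Lemma~\ref{compace conv}, taking $K=K^{(\omega)}$). Then $\bar{h}_{K_{i_{j}}}\to\bar{h}_{K}$ uniformly on $\omega$ and $\gamma^{n}(K_{i_{j}})\to\gamma^{n}(K)$, so $\phi_{\mu}(\bar{h}_{K_{i_{j}}})\to\phi_{\mu}(\bar{h}_{K})=\sup\phi_{\mu}$, i.e.\ the maximum is attained at $\bar{h}_{K}$. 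The variational identity \eqref{comp<0} derived before the statement then finishes the proof. The main obstacle is the case $r_{i}\to 0$: unlike the $p>0$ situation, where $\int_{\omega}\bar{h}_{K_{i}}^{p}\,d\mu\to 0$ helps, here $p<0$ makes that integral potentially large, so I must carefully exploit the two-sided control $c_{1}'\le\bar{h}_{K_{i}}\le c_{2}'$ available because $\omega$ is a fixed compact subset of $\Omega$ — this is what prevents the denominator from exploding and is the one place the argument genuinely differs from the $p>0$ proof.
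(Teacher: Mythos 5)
Your overall strategy matches the paper's: reduce to a maximizing sequence $\{\bar{h}_{K_i}\}$ in $\mathcal{K}(C,\omega)$ for $\phi_\mu$, rule out $r_i\to\infty$ and $r_i\to 0$, extract a convergent subsequence via Lemma \ref{select}, and invoke \eqref{comp<0}. Your treatment of $r_i\to\infty$ is exactly the paper's argument and is correct. But your handling of $r_i\to 0$ contains a genuine error. You claim $\gamma^{n}(K_i)\to 0$ as $r_i\to 0$; this is false. As the minimal distance to the origin shrinks, a $C$-pseudo-cone gets \emph{larger}, not smaller: for instance $K_i=C\cap H^{-}(u_0,-1/i)$ has $r_i\to 0$ while $\gamma^{n}(K_i)\to\gamma^{n}(C)>0$. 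Your parenthetical justification ($\gamma^{n}(K_i)\le\gamma^{n}(C\setminus K_i^{c})$, which is just $\gamma^{n}(C\cap K_i)=\gamma^{n}(K_i)$, plus a ``thin shell'' estimate) does not support the claim. Consequently your proposed mechanism --- numerator tending to zero while the denominator stays bounded below --- cannot produce the contradiction.

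The correct mechanism, which is the one the paper uses and which you have in fact inverted, is that the \emph{denominator} blows up: since $\bar{h}_{K_i}\le r_i$ and $p<0$, one has $\bar{h}_{K_i}^{p}\ge r_i^{p}$, so $\int_{\omega}\bar{h}_{K_i}^{p}\,d\mu\ge\mu(\omega)r_i^{p}\to+\infty$ as $r_i\to 0$, while the numerator satisfies $\gamma^{n}(K_i)\le\gamma^{n}(\mathbb{R}^{n}\setminus r_iB)\le 1$. Hence $\phi_\mu(\bar{h}_{K_i})\le\gamma^{n}(\mathbb{R}^{n}\setminus r_iB)/(\mu(\omega)r_i^{p})\to 0$, contradicting $\sup\phi_\mu>0$. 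Your closing remark that the large value of $\int_{\omega}\bar{h}_{K_i}^{p}\,d\mu$ is an obstacle to be overcome is precisely backwards: for $p<0$ that blow-up is the whole point of the $r_i\to 0$ case, and no two-sided control $c_1'\le\bar{h}_{K_i}\le c_2'$ is needed (nor is it available a priori --- it is the conclusion, not a hypothesis). The rest of your argument (compactness, weak convergence, and the Euler--Lagrange identity) is fine once this case is repaired.
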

\begin{proof}
	
	For  a maximizing sequence  $\{\bar{h}_{K_{i}}\}$,  $K_i \in \mathcal{K}(C, \omega)$, such that
	$$\lim _{i \rightarrow \infty} \phi_{\mu}(\bar{h}_{K_{i}})=\sup \left\{\phi_{\mu}(f): f \in C^{+}\left(\omega\right)\right\}.$$
	Denote
	$$r_{i}=\min\{r:rB\cap K_{i}\neq\emptyset \},$$
	If  $r_{i}\rightarrow\infty$,  since $\bar{h}_{K_{i}}^{p}\ge r_{i}^{p}$, the following estimate can obtained by Lemma \ref{decayesti}:
	\begin{align*}
		\phi_{\mu}(\bar{h}_{K_{i}})&=\frac{\gamma^{n}(K_{i})}{\int_{\omega}\bar{h}_{K_{i}} ^{p}  d\mu}\\
		&\le\frac{ \gamma^{n}(\mathbb{R}^{n}-r_{i}B)}{\mu(\omega) r_{i}^{p}}\\
		&\le\frac{c}{r_{i}}e^{-\frac{r_{i}^{2}}{2n}}\cdot r_{i}^{-p}\rightarrow 0,
	\end{align*}
	for some constant $c$. This contradicts the fact that $\lim _{i \rightarrow \infty} \phi_{\mu}(\bar{h}_{K_{i}})=\sup \phi_{\mu}>0$.
	
	Moreover, if $r_{i}\rightarrow 0$,
	since
	$$\gamma^{n}(\mathbb{R}^{n}-r_{i}B)\rightarrow 1,$$
	 we get
	\begin{align*}
		\phi_{\mu}(\bar{h}_{K_{i}})&=\frac{\gamma^{n}(K_{i})}{\int_{\omega}\bar{h}_{K_{i}} ^{p}  d\mu}\\
	&\le\frac{ \gamma^{n}(\mathbb{R}^{n}-r_{i}B)}{\mu(\omega) r_{i}^{p}}\rightarrow 0.
	\end{align*}
	We arrive at a contradiction again.
	
	This implies the uniform estimate $0<n_{1}<\operatorname{dist}(o,\partial K_{i})<n_{2}$, for some constants $n_{1}$ and $n_{2}$. From Lemma \ref{select}, we can get a  subsequence $K_{i_{j}}$, such that $K_{i_{j}}\rightarrow K$
	for some $K\in\mathcal{K}(C,\omega)$. Therefore, we have $\lim _{i \rightarrow \infty} \phi_{\mu}(\bar{h}_{K_{i_{j}}})=\phi_{\mu}(\bar{h}_{K})=\sup \phi_{\mu}(f)$.   By \eqref{comp<0},
	the result is proved.

\end{proof}

Let $\mu$ be a nonzero Borel measure on $\Omega$, $p<0$, applying approximation argument again.  Choose an increasing sequence of compact sets $\{\omega_i\}$, such that $\omega_i \subset \operatorname{int}\omega_{i+1}$ and $\bigcup_{i \in \mathbb{N}} \omega_i = \Omega$, where $\omega_{0}$ satisfies   $\mu_0 = \mu \llcorner \omega_0$ nonzero.  By Lemma \ref{minkowski comp<0}, for each $i \in \mathbb{N}$ and $\mu_i = \mu \llcorner \omega_i$, there exists a $C$-pseudo-cone $K_i \in \mathcal{K}(C, \omega_i)$ satisfying  
\begin{equation}\label{Kip<0}
	\mu_{i}=-\frac{\int_{\omega_{i}}\bar{h}_{K_{i}}^{p}d\mu_{i}}{p\gamma^{n}(K_{i})}S_{p,\gamma^{n}}(K_{i},\cdot).
\end{equation}

The  uniform estimate of $K_{i}$ as follows.

\begin{lemma}\label{unip<0}
	For the sequence $\{K_{i}\}$ in \eqref{Kip<0}, there exist two constant $m$ and $M$ such that
	$$0<m<\operatorname{dist}(o, \partial K_i) <M$$
	for all $i\in\mathbb{N}$.
\end{lemma}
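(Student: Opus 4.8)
The plan is to mimic the argument of Lemma \ref{uniesti}, using a monotonicity property of the optimal values $\phi_{\mu_i}(\bar h_{K_i})$ together with the decay estimate of Lemma \ref{decayesti}. First I would record the monotonicity: since $\mu_i \le \mu_{i+1} \le \mu$, $\omega_i \subset \operatorname{int}\omega_{i+1}$, and $\mathcal K(C,\omega_i)\subset \mathcal K(C,\omega_{i+1})$, for $K_i \in \mathcal K(C,\omega_i)$ we have $\bar h_{K_i}|_{\omega_i} = \bar h_{K_i}|_{\omega_{i+1}}$ as the relevant restriction and $[\bar h_{K_i}|_{\omega_i}] = [\bar h_{K_i}|_{\omega_{i+1}}]$ (the Wulff shape does not change), while $\int_{\omega_i}\bar h_{K_i}^p\,d\mu_i \ge \int_{\omega_{i+1}}\bar h_{K_i}^p\,d\mu_{i+1}$ because $p<0$ forces $\bar h_{K_i}^p$ to be bounded and integrating a positive function against the larger measure over the larger set — wait, here one must be careful: $p<0$ means $\bar h_{K_i}^p$ is large where $\bar h_{K_i}$ is small, but $\mu_{i+1}\ge\mu_i$ only enlarges the integral. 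So $\int_{\omega_{i+1}}\bar h_{K_i}^p\,d\mu_{i+1} \ge \int_{\omega_i}\bar h_{K_i}^p\,d\mu_i$, hence $\phi_{\mu_i}(\bar h_{K_i}) = \gamma^n([\bar h_{K_i}])/\int_{\omega_i}\bar h_{K_i}^p\,d\mu_i \ge \gamma^n([\bar h_{K_i}|_{\omega_{i+1}}])/\int_{\omega_{i+1}}\bar h_{K_i}^p\,d\mu_{i+1} = \phi_{\mu_{i+1}}(\bar h_{K_i}|_{\omega_{i+1}})$. Since $\bar h_{K_i}|_{\omega_{i+1}} \in C^+(\omega_{i+1})$ and $K_{i+1}$ maximizes $\phi_{\mu_{i+1}}$, this gives $\phi_{\mu_i}(\bar h_{K_i}) \ge \phi_{\mu_{i+1}}(\bar h_{K_{i+1}})$ — so here the optimal values are \emph{non-increasing} in $i$, not non-decreasing as in the $p>0$ case. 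In particular $0 < \phi_{\mu_{i+1}}(\bar h_{K_{i+1}}) \le \cdots \le \phi_{\mu_0}(\bar h_{K_0})$, so the sequence is bounded above by a positive constant and bounded below by $0$.

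Next I would set, as before, $r_i = \min\{r : rB \cap K_i \ne \emptyset\}$, so that $\bar h_{K_i}^p \ge r_i^p$ (valid since $p<0$ and $\bar h_{K_i}\le r_i$) and $K_i \subset \mathbb R^n - r_i B$. Then the chain of inequalities from the proof of Lemma \ref{minkowski comp<0} applies verbatim: $\phi_{\mu_i}(\bar h_{K_i}) \le \gamma^n(\mathbb R^n - r_i B)/(\mu_i(\omega_i) r_i^p) \le \gamma^n(\mathbb R^n - r_i B)/(\mu_0(\omega_0) r_i^p)$, using $\mu_i(\omega_i) \ge \mu_0(\omega_0) > 0$. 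If $r_i \to \infty$ along a subsequence, Lemma \ref{decayesti} bounds this by $(c/r_i) e^{-r_i^2/2n} r_i^{-p} \to 0$, contradicting $\phi_{\mu_i}(\bar h_{K_i}) \ge \phi_{\mu_{i+1}}(\bar h_{K_{i+1}}) \ge \cdots$ which is bounded below by a positive constant — more precisely, since $\phi_{\mu_i}(\bar h_{K_i})$ is non-increasing and positive it cannot tend to $0$, and actually if even a subsequence tends to $0$ that forces $\inf_i \phi_{\mu_i}(\bar h_{K_i}) = 0$; I should instead argue directly that $\phi_{\mu_0}(\bar h_{K_0}) \ge \phi_{\mu_i}(\bar h_{K_i})$ is \emph{not} what gives the contradiction — rather, I need a uniform positive lower bound. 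The clean way: fix any $K^* \in \mathcal K(C,\omega_0)$; then $\phi_{\mu_i}(\bar h_{K^*}|_{\omega_i}) = \gamma^n(K^*)/\int_{\omega_i}\bar h_{K^*}^p\,d\mu_i \ge \gamma^n(K^*)/\int_{\Omega}\bar h_{K^*}^p\,d\mu =: \varepsilon_0 > 0$ (finite since $\bar h_{K^*}$ is bounded below on $\omega_0$ by a positive constant and $\mu$ is finite, and the restriction of $\bar h_{K^*}$ off $\omega_0$ is handled by continuity/compactness). Hence $\phi_{\mu_i}(\bar h_{K_i}) \ge \varepsilon_0$ for all $i$, and this uniform lower bound is what contradicts $r_i \to \infty$.

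For the lower bound on $\operatorname{dist}(o,\partial K_i)$, suppose $r_i \to 0$ along a subsequence. Then $\gamma^n(\mathbb R^n - r_i B) \to 1$ while $r_i^p \to +\infty$ (as $p<0$), so $\phi_{\mu_i}(\bar h_{K_i}) \le \gamma^n(\mathbb R^n - r_i B)/(\mu_0(\omega_0) r_i^p) \to 0$, again contradicting the uniform lower bound $\varepsilon_0$. Therefore $0 < m < r_i < M$ uniformly, and since $r_i = \operatorname{dist}(o,\partial K_i)$ (the minimal radius of a ball meeting $K_i$ equals the distance from the origin to $K_i$, which for a pseudo-cone not containing the origin is $\operatorname{dist}(o,\partial K_i)$), the claim follows. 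The main obstacle I anticipate is the bookkeeping around the monotonicity/lower-bound step: unlike the $p>0$ case, the optimal values decrease, so the contradiction in the $r_i\to\infty$ branch cannot come from monotonicity alone and must instead be extracted from a fixed comparison competitor giving a uniform positive lower bound $\varepsilon_0$; once that is in place the rest is a direct transcription of the estimates in Lemma \ref{minkowski comp<0} and Lemma \ref{uniesti}.
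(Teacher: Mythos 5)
Your overall strategy matches the paper's: produce a uniform positive lower bound for the optimal values $\phi_{\mu_i}(\bar h_{K_i})$ by testing against a fixed competitor, then rule out $r_i\to\infty$ and $r_i\to 0$ with Lemma \ref{decayesti}. The second half of your argument (the two contradiction branches) is exactly the paper's. However, the competitor you choose creates a genuine gap. You set $\varepsilon_0:=\gamma^{n}(K^{*})/\int_{\Omega}\bar h_{K^{*}}^{p}\,d\mu$ and assert this is positive because the denominator is finite, justified by ``continuity/compactness'' off $\omega_0$. But $\Omega=S^{n-1}\cap\operatorname{int}C^{\circ}$ is not compact, and for a $C$-pseudo-cone $K^{*}$ the absolute support function $\bar h_{K^{*}}$ is \emph{not} bounded away from zero on $\Omega$: it typically tends to $0$ as $u$ approaches $\partial C^{\circ}$ (e.g.\ in $\mathbb{R}^{2}$ with $C=\{y\ge|x|\}$ and $K^{*}=C\cap\{y\ge 1\}$, one has $\bar h_{K^{*}}(u_{\theta})=\cos\theta-\sin\theta\to 0$ as $\theta\to\pi/4$). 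Since $p<0$, $\bar h_{K^{*}}^{p}$ blows up near $\partial\Omega$, and for a finite measure $\mu$ concentrating mass near $\partial\Omega$ the integral $\int_{\Omega}\bar h_{K^{*}}^{p}\,d\mu$ can be $+\infty$, making $\varepsilon_0=0$ and destroying the lower bound. The paper avoids this precisely by taking the \emph{constant} function $1|_{\omega_i}$ as competitor: then the denominator is just $\mu_i(\omega_i)\le\mu(\Omega)<\infty$ with no dependence on the decay of a support function, while the numerator $\gamma^{n}([1|_{\omega_i}])$ is bounded below by $\gamma^{n}(A)>0$ for the fixed set $A=\bigcap_{u\in\overline{\Omega}}\{x:\langle x,u\rangle\le-1\}$.

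A secondary, less consequential issue: your monotonicity digression concludes $\phi_{\mu_i}(\bar h_{K_i})\ge\phi_{\mu_{i+1}}(\bar h_{K_{i+1}})$ from the two facts $\phi_{\mu_i}(\bar h_{K_i})\ge\phi_{\mu_{i+1}}(\bar h_{K_i}|_{\omega_{i+1}})$ and $\phi_{\mu_{i+1}}(\bar h_{K_{i+1}})\ge\phi_{\mu_{i+1}}(\bar h_{K_i}|_{\omega_{i+1}})$; these only bound both quantities from below by the same number and imply no ordering between them. You correctly sense that monotonicity cannot carry the argument and discard it, so this does not affect your final structure, but the claimed inequality itself is a non sequitur. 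To repair the proof, replace your $K^{*}$-competitor with the constant function $1$ (or any competitor whose $p$-integral against $\mu$ is controlled by $\mu(\Omega)$ alone); the rest of your write-up then goes through as in the paper.
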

\begin{proof}
	The sequence $\{K_{i}\}$ in \eqref{Kip<0} satisfies
	$$\phi_{\mu_{i}}(\bar{h}_{K_{i}})=\sup \left\{\phi_{\mu_{i}}(f)=\frac{\gamma^{n}([f])}{\int_{\omega_{i}} f^{p} d\mu_{i}}: f \in C^{+}\left(\omega_{i}\right)\right\}.$$
	Let $1|_{\omega_{i}}\in C^{+}\left(\omega_{i}\right)$ denote the unit  function on $\omega_{i}$. For any $i\in\mathbb{N}$, the corresponding Wulff shape satisfies the inclusion
	\begin{align*}
		[1|_{\omega_{i}}]&=C\cap \bigcap_{u \in \omega_{i}}\{x\in\mathbb{R}^{n}: \langle x, u\rangle \le -1\} \\
		&\supset C\cap \bigcap_{u \in \overline{\Omega}}\{x\in\mathbb{R}^{n}: \langle x, u\rangle \le -1\} \\
		&=\bigcap_{u \in \overline{\Omega}}\{x\in\mathbb{R}^{n}: \langle x, u\rangle \le -1\}\\
		&\triangleq A.
	\end{align*}
		Thus $$\gamma^{n}([1|_{\omega_{i}}])\ge\gamma^{n}(A)>0$$
	for all $i\in\mathbb{N}$. Substituting into $\phi_{\mu_{i}}$, since $\mu_{i}\le \mu$, we have
	\begin{align*}
		\phi_{\mu_{i}}(1|_{\omega_{i}})&=\frac{\gamma^{n}([1|_{\omega_{i}}])}{\mu_{i}(\omega_{i})}\\
		&\ge\frac{\gamma^{n}([1|_{\omega_{i}}])}{\mu(\Omega)}\\
		&\ge \frac{\gamma^{n}(A)}{\mu(\Omega)}.
	\end{align*}
	Therefore, $\phi_{\mu_{i}}(\bar{h}_{K_{i}})=\sup \left\{\phi_{\mu_{i}}(f): f \in C^{+}\left(\omega_{i}\right)\right\}\ge \phi_{\mu_{i}}([1|_{\omega_{i}}])\ge  \frac{\gamma^{n}(A)}{\mu(\Omega)}>0$. 
	
	Denote again
	$$r_{i}=\min\{r:rB\cap K_{i}\neq\emptyset \},$$
	Then we also have
	$\bar{h}_{K_{i}}^{p}\ge r_{i}^{p}$.
	 If $r_{i}\rightarrow\infty$,  	 then
	\begin{align*}
		\phi_{\mu_{i}}(\bar{h}_{K_{i}})=&\frac{\gamma^{n}(K_{i})}{\int_{\omega_{i}}\bar{h}_{K_{i}}^{p}  d\mu_{i}}\\
		&\le \frac{\gamma^{n}(\mathbb{R}^{n}-r_{i}B)}{\mu_{i}(\omega_{i}) r_{i}^{p}}\\
		&\le \frac{\gamma^{n}(\mathbb{R}^{n}-r_{i}B)}{\mu_{0}(\omega_{0}) r_{i}^{p}}\\
		&\le\frac{l}{r_{i}}e^{-\frac{r_{i}^{2}}{2n}}\cdot r_{i}^{-p}\rightarrow 0,
	\end{align*}
	for some constant $l$.
	This contradicts the  uniform lower bound estimate  $\phi_{\mu_{i}}(\bar{h}_{K_{i}})\ge  \frac{\gamma^{n}(A)}{\mu(\Omega)}$.   If $r_{i}\rightarrow0$, then
	\begin{align*}
			\phi_{\mu_{i}}(\bar{h}_{K_{i}})=&\frac{\gamma^{n}(K_{i})}{\int_{\omega_{i}}\bar{h}_{K_{i}}^{p}  d\mu_{i}}\\
		&\le \frac{\gamma^{n}(\mathbb{R}^{n}-r_{i}B)}{\mu_{i}(\omega_{i}) r_{i}^{p}}\\
		&\le \frac{\gamma^{n}(\mathbb{R}^{n}-r_{i}B)}{\mu_{0}(\omega_{0}) r_{i}^{p}}\rightarrow 0,
	\end{align*}
this	also arrive at a contradiction.
\end{proof}

\begin{theorem}
	If $p<0$, 
	for any nonzero finite Borel measure  $\mu$ on $\Omega$,  there exists a $C$-pseudo-cone $K$ such that
	$$\mu=-\frac{c}{p}S_{p,\gamma^{n}}(K,\cdot),$$
	where $c=\frac{\int_{\Omega}\bar{h}_{K}^{p}d\mu}{\gamma^{n}(K)}$.
	
\end{theorem}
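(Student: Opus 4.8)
The plan is to follow the same architecture used for the case $p>0$ in Theorem~\ref{ minkowskip>0}, replacing the functional $I_\mu$ with $\phi_\mu$ and invoking the $p<0$ machinery already developed. First I would invoke Lemma~\ref{unip<0}, which gives the uniform bound $0<m<\operatorname{dist}(o,\partial K_i)<M$ for the sequence $\{K_i\}$ in \eqref{Kip<0}; by the selection theorem (Lemma~\ref{select}), a subsequence converges, $K_i\to K$, for some $C$-pseudo-cone $K$ with $\operatorname{dist}(o,\partial K)\in[m,M]$ (in particular $K\neq\emptyset$ and $\gamma^n(K)>0$, so the normalizing constant $c$ is well-defined and finite).

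Next I would localize to each compact piece exactly as in the proof of Theorem~\ref{ minkowskip>0}. Fix $i\in\mathbb N$ and pick a compact $\beta\subset\Omega$ with $\omega_i\subset\operatorname{int}\beta$. Using Lemma~13 of \cite{schneider weighted} one has $\nu_{K_j}^{-1}(\omega_i)=\nu_{K_j^{(\beta)}}^{-1}(\omega_i)$ and $\nu_{K}^{-1}(\omega_i)=\nu_{K^{(\beta)}}^{-1}(\omega_i)$, so that $S_{p,\gamma^n}(K_j,\cdot)\llcorner\omega_i$ depends only on $K_j^{(\beta)}$, and likewise for $K$. By Lemma~\ref{compace conv}, $K_j^{(\beta)}\to K^{(\beta)}$, and then Lemma~\ref{weal conve} gives weak convergence $S_{p,\gamma^n}(K_j,\cdot)\llcorner\omega_i\xrightarrow{w}S_{p,\gamma^n}(K,\cdot)\llcorner\omega_i$. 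Setting
\[
c_i=\frac{\int_{\omega_i}\bar h_{K_i}^{p}\,d\mu_i}{p\,\gamma^n(K_i)},\qquad c=\frac{\int_{\Omega}\bar h_{K}^{p}\,d\mu}{p\,\gamma^n(K)},
\]
the uniform bounds on $\operatorname{dist}(o,\partial K_i)$ together with the uniform convergence of support functions (and the weak convergence of the Gaussian surface area measures from \cite{shan2025}) force $c_i\to c$; hence $-c_i\,S_{p,\gamma^n}(K_i,\cdot)\llcorner\omega_i\xrightarrow{w}-c\,S_{p,\gamma^n}(K,\cdot)\llcorner\omega_i$. Since \eqref{Kip<0} says $\mu\llcorner\omega_i=-c_i\,S_{p,\gamma^n}(K_i,\cdot)\llcorner\omega_i$ for all $i$ (the restriction stabilizes once $j\geq i$), passing to the limit yields $-c\,S_{p,\gamma^n}(K,\sigma)=\mu(\sigma)$ for every Borel $\sigma\subset\omega_i$.

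Finally, because $\bigcup_{i\in\mathbb N}\omega_i=\Omega$ and the identity holds on each $\omega_i$, I conclude $\mu=-\frac{c}{p}S_{p,\gamma^n}(K,\cdot)$ on all of $\Omega$, which is the assertion. The only genuinely delicate point is the convergence of the normalizing constants $c_i\to c$ and the justification that the restricted measures genuinely stabilize: one must check that $\int_{\omega_i}\bar h_{K_i}^{p}\,d\mu_i\to\int_\Omega\bar h_K^{p}\,d\mu$, which combines (a) the uniform upper and lower bounds on $\bar h_{K_i}$ coming from Lemma~\ref{unip<0} (these keep $\bar h_{K_i}^{p}$ bounded above and below since $p<0$), (b) uniform convergence $\bar h_{K_{i_j}}\to\bar h_{K}$ on compacta, and (c) the monotone exhaustion $\mu_i\uparrow\mu$ together with finiteness of $\mu$; a dominated/monotone convergence argument then closes the gap. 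Everything else is a verbatim transcription of the $p>0$ argument with $\phi_\mu$ in place of $I_\mu$.
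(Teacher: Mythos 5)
Your proposal is correct and follows essentially the same route as the paper: the paper's proof of this theorem is precisely ``use the uniform estimate of Lemma~\ref{unip<0} and repeat the approximation argument of Theorem~\ref{ minkowskip>0}'', which is what you carry out. Your extra remarks on the convergence of the normalizing constants $c_i\to c$ only make explicit a step the paper leaves implicit in both the $p>0$ and $p<0$ cases.
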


\begin{proof}
	This follows from the uniform estimate of $\{K_{i}\}$ in Lemma \ref{unip<0}  and applying approximation argument as Theorem \ref{ minkowskip>0} again.
\end{proof}

\section{uniqueness of solution}

The Gaussian measure is log-concave with respect to convex sets (in this paper, we focus on a special class of convex sets---$C$-pseudo-cones), that is:

\begin{lemma}\label{log cancave}
	For  $C$-pseudo-cones $K$ and $L$  in  $\mathbb{R}^{n}$,   $0<t<1$, 
	\begin{equation}\label{logcancave1}
		\gamma^{n}((1-t) K+t L) \geq \gamma^{n}(K)^{1-t} \gamma^{n}(L)^{t}
	\end{equation}
	 equality holds if and only if $K=L$.
\end{lemma}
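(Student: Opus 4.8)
The plan is to prove the log-concavity inequality \eqref{logcancave1} for $C$-pseudo-cones by reducing it to the classical Pr\'ekopa--Leindler inequality applied to the Gaussian density. First I would set $M = (1-t)K + tL$, which is again a $C$-pseudo-cone (the Minkowski combination of convex sets is convex, and $\operatorname{rec}((1-t)K+tL) = (1-t)C + tC = C$), so $\gamma^n(M)$ is well defined. Then I would introduce the functions $f(x) = \mathbf{1}_K(x)\, e^{-|x|^2/2}$, $g(y) = \mathbf{1}_L(y)\, e^{-|y|^2/2}$, and $m(z) = \mathbf{1}_M(z)\, e^{-|z|^2/2}$, all up to the normalizing constant $(2\pi)^{-n/2}$.

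The key verification is the Pr\'ekopa--Leindler hypothesis: for all $x \in K$, $y \in L$ and $z = (1-t)x + ty$, one has $z \in M$ by definition of Minkowski combination, and
\begin{equation*}
|z|^2 = |(1-t)x + ty|^2 \leq (1-t)|x|^2 + t|y|^2,
\end{equation*}
which is just convexity of $x \mapsto |x|^2$. Hence $m(z) = e^{-|z|^2/2} \geq e^{-((1-t)|x|^2 + t|y|^2)/2} = f(x)^{1-t} g(y)^t$, and when $x \notin K$ or $y \notin L$ the right-hand side is zero, so the inequality $m((1-t)x+ty) \geq f(x)^{1-t}g(y)^t$ holds for \emph{all} $x, y \in \mathbb{R}^n$. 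Pr\'ekopa--Leindler then gives $\int m \geq \left(\int f\right)^{1-t}\left(\int g\right)^t$, i.e.\ $\gamma^n(M) \geq \gamma^n(K)^{1-t}\gamma^n(L)^t$.

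The main obstacle is the equality characterization, which Pr\'ekopa--Leindler alone does not supply in a usable form. Here I would invoke the sharp log-concavity of the Gaussian measure on convex bodies (the statement quoted in Section~\ref{section2}, with references \cite{cianchideficit,Z2021,Gardner2010}): equality in the Gaussian Pr\'ekopa--Leindler / Ehrhard-type inequality forces $K$ and $L$ to coincide up to the relevant structure. To make this rigorous for unbounded $C$-pseudo-cones, I would approximate by the truncations $K^-(s), L^-(s)$ for large $s > 0$ (using Definition~\ref{conv def} and the fact that $\gamma^n(K^-(s)) \to \gamma^n(K)$, $\gamma^n((1-t)K^-(s) + tL^-(s)) \to \gamma^n((1-t)K+tL)$ as $s \to \infty$, since the Gaussian measure of the complementary unbounded pieces is controlled by Lemma~\ref{decayesti}), apply the known equality case for convex bodies to each truncation, and pass to the limit to conclude $K^-(s) = L^-(s)$ for all large $s$, hence $K = L$. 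Conversely, if $K = L$ then both sides of \eqref{logcancave1} equal $\gamma^n(K)$ trivially, so equality holds.
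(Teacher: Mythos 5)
Your derivation of the inequality itself is correct and is the standard route: the Pr\'ekopa--Leindler hypothesis is verified exactly as you say (convexity of $x\mapsto|x|^{2}$ plus the inclusion $(1-t)x+ty\in M$ for $x\in K$, $y\in L$), and all that is needed of $M=(1-t)K+tL$ is measurability, which holds because $M$ is convex; so the side remarks about $\operatorname{rec}M$ and closedness of the Minkowski sum are harmless even though equality of recession cones requires more care than ``$(1-t)C+tC=C$''. For comparison, the paper does not prove this lemma at all --- it records it as a known property of the Gaussian measure with references --- so any complete argument here already goes beyond what the paper supplies.

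The equality case as you have arranged it contains a genuine gap. From $\gamma^{n}((1-t)K+tL)=\gamma^{n}(K)^{1-t}\gamma^{n}(L)^{t}$ you may conclude that the deficits $\gamma^{n}\bigl((1-t)K^{-}(s)+tL^{-}(s)\bigr)-\gamma^{n}(K^{-}(s))^{1-t}\gamma^{n}(L^{-}(s))^{t}$ tend to $0$ as $s\to\infty$, but not that any one of them vanishes; equality in a limit of inequalities does not transfer to equality at any finite stage. Consequently the step ``apply the known equality case for convex bodies to each truncation'' is not available, and to extract $K^{-}(s)=L^{-}(s)$ from an asymptotically vanishing deficit you would need a quantitative stability theorem for the Gaussian log-Brunn--Minkowski inequality, uniform in $s$, which you have not invoked and which is a much heavier tool than the problem requires. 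The repair is to drop the truncation entirely and use the equality conditions of Pr\'ekopa--Leindler (Dubuc) directly on the integrable functions $f=\mathbf{1}_{K}e^{-|x|^{2}/2}$, $g=\mathbf{1}_{L}e^{-|x|^{2}/2}$, $m=\mathbf{1}_{M}e^{-|x|^{2}/2}$: equality of the integrals forces $f$ and $g$ to agree almost everywhere with translates (times positive constants) of $m$. Since the Gaussian density is everywhere positive, this identifies $K$ and $L$ with translates of $M$ up to null sets, and the resulting relation $e^{-|x|^{2}/2}=c\,e^{-|x+a|^{2}/2}$ on the $n$-dimensional set $K$ means $\langle x,a\rangle$ is constant on a set with nonempty interior, forcing $a=0$ and $c=1$. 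Hence $K=M=L$ as closed convex sets, with no approximation needed and full validity for unbounded sets of finite Gaussian measure.
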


Similar to the method for obtaining uniqueness in classical convex body theory, the following $L_{p}$ ``mixed volume'' inequality is a necessary technique for establishing the uniqueness of the corresponding $L_{p}$ Minkowski problem. 

\begin{lemma}\label{Minkowski ine}
	Let $\omega\subset\Omega$ be a nonempty compact set  and $p\in (0,1]$, for $K,L\in\mathcal{K}(C,\omega)$, we have
	\begin{equation}
		\frac{1}{p\gamma^{n}(K)}\int_{\omega}\bar{h}_{K}^{p}-\bar{h}_{L}^{p}dS_{p,\gamma^{n}}K\ge \log\frac{\gamma^{n}(L)}{\gamma^{n}(K)},
	\end{equation}
	 equality holds if and only if $K=L$.
\end{lemma}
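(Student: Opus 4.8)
The plan is to derive the inequality from the $L_p$ variational formula (Lemma \ref{vari formula}) combined with the log-concavity of the Gaussian measure (Lemma \ref{log cancave}). The idea is to run the variational argument along the $L_p$-Wulff-shape path interpolating between $K$ and $L$: set $f(u) = \bar{h}_L^p(u) - \bar{h}_K^p(u)$ on $\omega$, and for $t \in [0,1]$ consider $h_t = (\bar{h}_K^p + tf)^{1/p} = ((1-t)\bar{h}_K^p + t\bar{h}_L^p)^{1/p}$, with associated Wulff shapes $[h_t]$. Note $h_0 = \bar{h}_K$ so $[h_0] = K$, and $h_1 = \bar{h}_L$ so $[h_1] = L$ (since $K, L \in \mathcal{K}(C,\omega)$ their absolute support functions restricted to $\omega$ recover them as Wulff shapes).

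First I would define $\varphi(t) = \log \gamma^n([h_t])$ for $t \in [0,1]$ and show it is concave. The key point is that $[h_t] \supseteq (1-t)[h_0] + t[h_1] = (1-t)K + tL$; this is the standard fact that the Wulff shape associated to an $L_p$-combination (with $p \le 1$) of support functions contains the corresponding Minkowski combination — indeed for $p \in (0,1]$ one has $((1-t)a^p + tb^p)^{1/p} \ge (1-t)a + tb$ for $a,b > 0$, so the halfspaces defining $[h_t]$ are contained in those defining $(1-t)K + tL$, hence $[h_t] \supseteq (1-t)K + tL$. Combining with monotonicity of $\gamma^n$ and Lemma \ref{log cancave},
\begin{equation*}
\varphi(t) = \log\gamma^n([h_t]) \ge \log\gamma^n((1-t)K + tL) \ge (1-t)\log\gamma^n(K) + t\log\gamma^n(L),
\end{equation*}
so $\varphi(t) \ge (1-t)\varphi(0) + t\varphi(1)$ on $[0,1]$; that is, $\varphi$ lies above its chord, which combined with $\varphi$ being differentiable at $0$ (via Lemma \ref{vari formula}) gives $\varphi'(0) \le \varphi(1) - \varphi(0)$. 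Actually the cleanest route: since $\varphi(t) - [(1-t)\varphi(0)+t\varphi(1)] \ge 0$ with value $0$ at $t=0$, its right derivative at $0$ is $\ge 0$, i.e. $\varphi'(0) - (\varphi(1)-\varphi(0)) \ge 0$, giving $\varphi'(0) \ge \varphi(1) - \varphi(0) = \log\frac{\gamma^n(L)}{\gamma^n(K)}$.

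Then I would compute $\varphi'(0)$ using Lemma \ref{vari formula}:
\begin{equation*}
\varphi'(0) = \frac{1}{\gamma^n(K)} \cdot \frac{d}{dt}\Big|_{t=0}\gamma^n([h_t]) = \frac{1}{\gamma^n(K)}\left(-\frac{1}{p}\int_\omega f\, dS_{p,\gamma^n}(K,\cdot)\right) = -\frac{1}{p\gamma^n(K)}\int_\omega (\bar{h}_L^p - \bar{h}_K^p)\, dS_{p,\gamma^n}(K,\cdot),
\end{equation*}
which is exactly $\frac{1}{p\gamma^n(K)}\int_\omega (\bar{h}_K^p - \bar{h}_L^p)\, dS_{p,\gamma^n}(K,\cdot)$. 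This yields the claimed inequality. For the equality case: if equality holds then $\varphi(t) = (1-t)\varphi(0) + t\varphi(1)$ must fail to be strict somewhere — more precisely, tracing back, equality forces $\gamma^n((1-t)K+tL) = \gamma^n(K)^{1-t}\gamma^n(L)^t$ for the relevant $t$ (one needs to argue equality propagates, e.g. via concavity of $\varphi$ being forced to be affine), and then the equality characterization in Lemma \ref{log cancave} gives $K = L$.

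The main obstacle I anticipate is the equality case, and a subtle point in justifying $\varphi'(0) \ge \varphi(1)-\varphi(0)$ rigorously: Lemma \ref{vari formula} only provides differentiability of $t \mapsto \gamma^n([h_t])$ at $t = 0$ (a two-sided limit along the stated family), so I cannot directly invoke concavity of $\varphi$ on all of $[0,1]$ from the chord inequality alone unless I know $\varphi$ is continuous — but continuity follows from Lemma \ref{compace conv} / Lemma \ref{weal conve} type arguments since $t\mapsto h_t$ is continuous and Wulff shapes depend continuously on their data, giving $[h_t] \to [h_{t_0}]$ and hence $\gamma^n([h_t]) \to \gamma^n([h_{t_0}])$. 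With continuity plus the chord inequality, the function $g(t) = \varphi(t) - (1-t)\varphi(0) - t\varphi(1)$ is continuous, nonnegative, and vanishes at the endpoints; its right derivative at $0$, which equals $\varphi'(0) - (\varphi(1) - \varphi(0))$, is then $\ge 0$ because $g \ge 0 = g(0)$. For equality, $g \equiv 0$ need not follow from $g'(0^+) = 0$ directly, so I would instead note that equality in the final inequality together with the already-established inequality at the level of $\gamma^n((1-t)K+tL)$ forces, by a standard argument applying the same inequality with $K$ and $L$ swapped (or by examining the concavity more carefully), that $\gamma^n((1-t)K + tL) = \gamma^n(K)^{1-t}\gamma^n(L)^t$ for some $t \in (0,1)$, whence Lemma \ref{log cancave} gives $K = L$. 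Conversely $K = L$ trivially gives equality. I would flag that the forward direction of the equality characterization is where the argument needs the most care.
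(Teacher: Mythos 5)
Your proposal is correct and follows essentially the same route as the paper: the interpolation $h_t=((1-t)\bar{h}_K^p+t\bar{h}_L^p)^{1/p}$, the inclusion $[h_t]\supseteq(1-t)K+tL$ from concavity of $s\mapsto s^p$, log-concavity of $\gamma^n$, and differentiation at $t=0^+$ via Lemma \ref{vari formula}. The equality case you flag as delicate is resolved in the paper exactly by the mechanism you sketch: $k(t)=\log\gamma^{n}((1-t)K+tL)$ is concave, equality forces $k'(0)=k(1)-k(0)$ and hence $k$ affine on $[0,1]$, so the equality condition in Lemma \ref{log cancave} yields $K=L$.
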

\begin{proof}
	If $p\in (0,1]$, $f(t)=t^{p}$ is a concave function, consequently, for $t\in[0,1]$,
	$$\left( (1-t)\bar{h}_{K}+t\bar{h}_{L}\right)^{p}\ge (1-t)\bar{h}_{K}^{p}+t\bar{h}_{L}^{p}.$$
	In other words,
	$$ (1-t)\bar{h}_{K}+t\bar{h}_{L}\ge \left( (1-t)\bar{h}_{K}^{p}+t\bar{h}_{L}^{p}\right) ^{\frac{1}{p}},$$
	and
	$$ -\left( (1-t)\bar{h}_{K}+t\bar{h}_{L}\right) \le -\left( (1-t)\bar{h}_{K}^{p}+t\bar{h}_{L}^{p}\right) ^{\frac{1}{p}}.$$
	Therefore, we have the set inclusion
	\begin{align*}
		[\left( (1-t)\bar{h}_{K}^{p}+t\bar{h}_{L}^{p}\right) ^{\frac{1}{p}}\big|_{\omega}]&=C \cap \bigcap_{u \in \omega}\left\{y \in \mathbb{R}^{n}:\langle y, u\rangle \leq -\left( (1-t)\bar{h}_{K}(u)^{p}+t\bar{h}_{L}(u)^{p}\right) ^{\frac{1}{p}}\right\}\\&\supset C \cap \bigcap_{u \in \omega}\left\{y \in \mathbb{R}^{n}:\langle y, u\rangle \leq -\left( (1-t)\bar{h}_{K}(u)+t\bar{h}_{L}(u)\right) \right\}\\
		&=[\bar{h}_{K}|_{\omega}+t(\bar{h}_{L}|_{\omega}-\bar{h}_{K}\big|_{\omega})]\\
		&\supset C \cap \bigcap_{u \in \Omega}\left\{y \in \mathbb{R}^{n}:\langle y, u\rangle \leq -\left( (1-t)\bar{h}_{K}(u)+t\bar{h}_{L}(u)\right)\right\}\\
		&=(1-t)K+tL.
	\end{align*}
	 By the log-concave property in Lemma \ref{log cancave} we get
	
	\begin{align*}
		\log \gamma^{n}\left( \left[\left( (1-t)\bar{h}_{K}^{p}+t\bar{h}_{L}^{p}\right) ^{\frac{1}{p}}\big|_{\omega}\right]\right) 
		&\ge\log \gamma^{n}\left( \left[\bar{h}_{K}|_{\omega}+t(\bar{h}_{L}|_{\omega}-\bar{h}_{K}|_{\omega})\right]\right) \\ &\geq\log \gamma^{n}((1-t) K+t L)\\
		&  \geq (1-t)\log \gamma^{n}(K) +t\log \gamma^{n}(L).
	\end{align*}
	Using the $L_{p}$ variational formula in Lemma \ref{vari formula}, we differentiate both sides of the above inequality at $t = 0^{+}$:
	$$-\frac{1}{p\gamma^{n}(K)}\int_{\omega}\bar{h}_{L}^{p}-\bar{h}_{K}^{p}dS_{p,\gamma^{n}}K\ge \log\gamma^{n}(L)-\log\gamma^{n}(K).$$
Assuming equality holds, it follows that
	\begin{align}\label{num}
		&\lim _{t \rightarrow 0^{+}} \frac{\log \gamma^{n}\left(\left[\left( (1-t)\bar{h}_{K}^{p}+t\bar{h}_{L}^{p}\right) ^{\frac{1}{p}}\big|_{\omega}\right]\right) -\log \gamma^{n}(K) }{t}\\  \nonumber
		=&\lim _{t \rightarrow 0^{+}} \frac{\log \gamma^{n}\left([\bar{h}_{K}|_{\omega}+t(\bar{h}_{L}|_{\omega}-\bar{h}_{K}|_{\omega})]\right) -\log \gamma^{n}(K) }{t}\\  \nonumber
		=&\lim _{t \rightarrow 0^{+}}\frac{\log \gamma^{n}((1-t) K+t L)-\log \gamma^{n}(K)}{t}\\  \nonumber
		=&\log\gamma^{n}(L)-\log\gamma^{n}(K).
	\end{align}
From the log-concave inequality \eqref{logcancave1},	the function $k(t)=\log\left(\gamma^{n}((1-t) K+t L)\right) $ is concave on the interval $[0,1]$. In the case where equality is achieved, $k'(0)=k(1) -k(0)$ in \eqref{num}, then $k(t)$ must be a linear function. As a result,  the equality condition in \eqref{logcancave1} leads to the conclusion that $K=L$.
\end{proof}

	Using the $L_{p}$ mixed volume inequality, we can derive the $L_p$ generalization of the uniqueness result for the Gaussian Minkowski problem for $C$-pseudo-cones in \cite{shan2025}.

\begin{theorem}\label{unique}
For $p\in (0,1]$, 	let $\omega\subset\Omega$ be a nonempty compact set. If	 $K,L\in\mathcal{K}(C,\omega)$ have the same $L_{p}$ Gaussian surface area measure, i.e.,
	$$S_{p,\gamma^{n}}(K,\cdot)=S_{p,\gamma^{n}}(L,\cdot),$$
and if $\gamma^{n}(K)=\gamma^{n}(L)$,	 then $K=L$.
\end{theorem}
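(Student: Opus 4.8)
The plan is to use the $L_{p}$ mixed volume inequality of Lemma \ref{Minkowski ine} symmetrically in $K$ and $L$, exactly as one does in the classical Minkowski uniqueness argument. First I would apply Lemma \ref{Minkowski ine} with the roles of $K$ and $L$ as stated, obtaining
\begin{equation*}
\frac{1}{p\gamma^{n}(K)}\int_{\omega}\bigl(\bar{h}_{K}^{p}-\bar{h}_{L}^{p}\bigr)\,dS_{p,\gamma^{n}}(K,\cdot)\ge \log\frac{\gamma^{n}(L)}{\gamma^{n}(K)},
\end{equation*}
with equality iff $K=L$. Then I would apply the same lemma with $K$ and $L$ interchanged, obtaining
\begin{equation*}
\frac{1}{p\gamma^{n}(L)}\int_{\omega}\bigl(\bar{h}_{L}^{p}-\bar{h}_{K}^{p}\bigr)\,dS_{p,\gamma^{n}}(L,\cdot)\ge \log\frac{\gamma^{n}(K)}{\gamma^{n}(L)},
\end{equation*}
again with equality iff $K=L$.

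Next I would invoke the two hypotheses. Since $\gamma^{n}(K)=\gamma^{n}(L)$, the right-hand sides of both inequalities vanish; write $V:=\gamma^{n}(K)=\gamma^{n}(L)>0$. Since $S_{p,\gamma^{n}}(K,\cdot)=S_{p,\gamma^{n}}(L,\cdot)$, call this common measure $S$; then the two inequalities become
\begin{equation*}
\frac{1}{pV}\int_{\omega}\bigl(\bar{h}_{K}^{p}-\bar{h}_{L}^{p}\bigr)\,dS\ge 0
\qquad\text{and}\qquad
\frac{1}{pV}\int_{\omega}\bigl(\bar{h}_{L}^{p}-\bar{h}_{K}^{p}\bigr)\,dS\ge 0.
\end{equation*}
Adding these (or observing they are negatives of each other since $p>0$ and $V>0$) forces
\begin{equation*}
\frac{1}{pV}\int_{\omega}\bigl(\bar{h}_{K}^{p}-\bar{h}_{L}^{p}\bigr)\,dS = 0,
\end{equation*}
so equality holds in \emph{both} applications of Lemma \ref{Minkowski ine}. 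By the equality characterization in that lemma, $K=L$.

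The argument is essentially immediate once Lemma \ref{Minkowski ine} is available, so there is no substantial obstacle remaining in this theorem itself; the work has already been done in proving the $L_{p}$ mixed volume inequality and its equality case. The one point that deserves a careful sentence is that the integrand $\bar{h}_{K}^{p}-\bar{h}_{L}^{p}$ is integrated against the \emph{same} finite measure $S$ in both inequalities, which is exactly what lets us add them and conclude the integral is zero rather than merely nonnegative; this uses both hypotheses simultaneously, and the restriction $p\in(0,1]$ is inherited from Lemma \ref{Minkowski ine} (it is where concavity of $t\mapsto t^{p}$ and hence the governing set inclusion and log-concavity chain were used). I would also note explicitly that $\gamma^{n}(K),\gamma^{n}(L)>0$ since any $C$-pseudo-cone has nonempty interior, so dividing by these quantities and by $p$ is legitimate.
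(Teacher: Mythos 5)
Your proposal is correct and follows essentially the same route as the paper: apply Lemma \ref{Minkowski ine} with the roles of $K$ and $L$ in both orders, use $\gamma^{n}(K)=\gamma^{n}(L)$ to make the right-hand sides vanish and $S_{p,\gamma^{n}}(K,\cdot)=S_{p,\gamma^{n}}(L,\cdot)$ to see the two resulting integrals are negatives of each other, force equality, and invoke the equality case of the lemma. The only cosmetic difference is that the paper phrases the combination as a chain of inequalities between $\int_{\omega}\bar{h}_{K}^{p}\,dS_{p,\gamma^{n}}K$ and $\int_{\omega}\bar{h}_{L}^{p}\,dS_{p,\gamma^{n}}L$ rather than adding the two inequalities, which is the same computation.
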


\begin{proof}
	 Since $\gamma^{n}(K)=\gamma^{n}(L)$, by Lemma \ref{Minkowski ine}, we get
		$$\int_{\omega}\bar{h}_{K}^{p}-\bar{h}_{L}^{p}dS_{p,\gamma^{n}}K\ge 0.$$
Therefore,
	\begin{equation*}
		\int_{\omega} \bar{h}_{L}^{p} d S_{p,\gamma^{n}}L=\int_{\omega} \bar{h}_{L}^{p} d S_{p,\gamma^{n}}K\le \int_{\omega} \bar{h}_{K}^{p} d S_{p,\gamma^{n}}K.
	\end{equation*}
	By substituting $K$ for $L$ and vice versa,
	\begin{align*}
		\int_{\omega} \bar{h}_{K}^{p} d S_{p,\gamma^{n}}K=\int_{\omega} \bar{h}_{K}^{p} d S_{p,\gamma^{n}}L\le \int_{\omega} \bar{h}_{L}^{p} d S_{p,\gamma^{n}}L.
	\end{align*}
	Thus,
	$$\int_{\omega} \bar{h}_{L}^{p} d S_{p,\gamma^{n}}L\le \int_{\omega} \bar{h}_{K}^{p} d S_{p,\gamma^{n}}K\le\int_{\omega} \bar{h}_{L}^{p} d S_{p,\gamma^{n}}L. $$
	As a result, equality holds in both inequalities. By
	Lemma \ref{Minkowski ine}, we conclude that $K=L$.
\end{proof}

In the absence of a volume constraint, the following lemma extends the non-uniqueness of the Gaussian Minkowski problem to the $L_p$ setting.

\begin{theorem}
If $p<n$,	for any pointed, $n$-dimensional closed convex cone $C$ in $\mathbb{R}^{n}$ and any nonempty compact set $\omega\subset\Omega$, 	there exist distinct sets $K, L \in \mathcal{K}(C, \omega)$ such that their $L_p$ Gaussian surface area measures coincide, i.e., $S_{p,\gamma^{n}}(K,\cdot)=S_{p,\gamma^{n}}(L,\cdot)$, but $K\neq L$.
\end{theorem}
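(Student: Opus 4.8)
The plan is to realize both $K$ and $L$ as single‑cut truncations of the cone $C$ and to exploit the asymptotics of the Gaussian surface area density of a slice of $C$. First I would fix an arbitrary $u_{0}\in\omega$ (possible since $\omega\neq\emptyset$), and for $t>0$ set $P_{t}:=C\cap H^{-}(u_{0},-t)$ with slice $F_{t}:=C\cap H(u_{0},-t)$. Because $u_{0}\in\operatorname{int}C^{\circ}$, the functional $\langle\cdot,u_{0}\rangle$ is strictly negative on $C\setminus\{0\}$ and $n$-dimensionality of $C$ forces $H(u_{0},-t)$ to meet $\operatorname{int}C$, so $F_{t}$ is a compact $(n-1)$-dimensional convex set with $\mathcal{H}^{n-1}(F_{t})>0$, and $P_{t}$ is a $C$-pseudo-cone. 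I would then check that $h_{P_{t}}(u_{0})=-t$ and, since $u_{0}\in\omega$, that $C\cap\bigcap_{u\in\omega}H^{-}_{P_{t}}(u)=C\cap H^{-}(u_{0},-t)=P_{t}$, so $P_{t}\in\mathcal{K}(C,\omega)$. The decisive structural point is that every boundary normal of $P_{t}$ other than $u_{0}$ lies on $\partial C^{\circ}$ (a normal at a point $x\in\partial C\setminus\{0\}$ annihilates $x$, hence lies on the boundary of the polar cone), so $\nu_{P_{t}}^{-1}(\Omega)$ agrees $\mathcal{H}^{n-1}$-a.e.\ with the relative interior of $F_{t}$, where $\nu_{P_{t}}\equiv u_{0}$ and $\langle x,u_{0}\rangle=-t$. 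This yields $S_{p,\gamma^{n}}(P_{t},\cdot)=\phi(t)\,\delta_{u_{0}}$ with $\phi(t):=(\sqrt{2\pi})^{-n}\,t^{1-p}\int_{F_{t}}e^{-|x|^{2}/2}\,d\mathcal{H}^{n-1}$.

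Next I would study the single‑variable function $\phi:(0,\infty)\to(0,\infty)$. It is continuous (dominated convergence) and strictly positive (since $\mathcal{H}^{n-1}(F_{t})>0$). Using the homogeneity $F_{t}=tF_{1}$ I would rewrite $\phi(t)=(\sqrt{2\pi})^{-n}\,t^{\,n-p}\int_{F_{1}}e^{-t^{2}|z|^{2}/2}\,d\mathcal{H}^{n-1}(z)$. As $t\to0^{+}$ the integral increases to $\mathcal{H}^{n-1}(F_{1})\in(0,\infty)$, so $\phi(t)\sim c_{0}t^{\,n-p}$ with $c_{0}>0$; this is exactly where the hypothesis $p<n$ is used, forcing $\phi(t)\to0$. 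As $t\to+\infty$, every $z\in F_{1}$ satisfies $|z|\ge|\langle z,u_{0}\rangle|=1$, so $\phi(t)\le c_{1}t^{\,n-p}e^{-t^{2}/2}\to0$. (These two limits are precisely the asymptotic behaviour of the Gaussian surface area density alluded to just before the theorem.)

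Finally, $\phi$ extends continuously to $[0,+\infty]$ by setting $\phi(0)=\phi(+\infty)=0$, so it attains its maximum $M>0$ at some interior point $t^{*}$; choosing any level $v\in(0,M)$ and applying the intermediate value theorem separately on $(0,t^{*}]$ and on $[t^{*},+\infty)$ produces $t_{1}<t^{*}<t_{2}$ with $\phi(t_{1})=\phi(t_{2})=v$. Then $K:=P_{t_{1}}$ and $L:=P_{t_{2}}$ lie in $\mathcal{K}(C,\omega)$, are distinct (indeed $K\supsetneq L$ since $t_{1}<t_{2}$), and satisfy $S_{p,\gamma^{n}}(K,\cdot)=v\,\delta_{u_{0}}=S_{p,\gamma^{n}}(L,\cdot)$, which is the assertion.

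I expect the main obstacle to be the bookkeeping in the first step rather than any deep difficulty: one must carefully justify that $P_{t}$ is genuinely $C$-determined by $\omega$ and that its $L_{p}$ Gaussian surface area measure is supported on the single point $u_{0}$ with the stated mass, using that all remaining boundary normals sit on $\partial C^{\circ}$ and hence contribute nothing on $\Omega$. Once $\phi$ is correctly identified, the role of $p<n$ (through the exponent $t^{\,n-p}$ near the origin) and the non‑injectivity of $\phi$ are routine.
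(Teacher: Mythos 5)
Your proposal is correct and follows essentially the same route as the paper: truncate $C$ by a single supporting hyperplane with normal $u_{0}\in\omega$, observe that the resulting $L_{p}$ Gaussian surface area measure on $\Omega$ is a point mass at $u_{0}$ of weight $t^{1-p}s(t)$, show this weight tends to $0$ as $t\to 0^{+}$ (using $p<n$) and as $t\to\infty$, and conclude non-injectivity by the intermediate value theorem. The only cosmetic difference is that you obtain the $t^{n-p}$ asymptotics via the exact scaling $F_{t}=tF_{1}$, whereas the paper bounds the slice by an inscribed ball $tB_{a}^{n-1}$; your verification that $P_{t}\in\mathcal{K}(C,\omega)$ and that the remaining normals lie on $\partial C^{\circ}$ is in fact more careful than the paper's.
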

\begin{proof}
For each $v\in\omega$, due to the rotational invariance of the Gaussian measure,  suppose that $v=-e_{n}\in\Omega$. Let $H(t)$ denote the intersection  $C\cap H(e_{n},t)$, and define
	$$A=\{x\in\mathbb{R}^{n-1}:(x,1)\in H(1)\}.$$
For $t>0$, the	Gaussian surface area function $s(t)$ of $H(t)$ (up to a constant) is given by $s(t)=\int_{H(t)}e^{-\frac{|y|^{2}}{2}}d\mathcal{H}^{n-1}(y)$.
	Let $B_{a}^{n-1}$ be a ball in $\mathbb{R}^{n-1}$ with radius $a$, the Gaussian volume estimate of $tB_{a}^{n-1}$, as $t\rightarrow0$, we know
	\begin{equation}\label{n-1 estimate}
	\int_{tB_{a}^{n-1}}e^{-\frac{|y|^{2}}{2}}d\mathcal{H}^{n-1}(y)\sim t^{n-1},\quad\quad t\rightarrow 0.
	\end{equation}
	Since
	\begin{align*}
		\int_{tB_{a}^{n-1}}e^{-\frac{|y|^{2}}{2}}d\mathcal{H}^{n-1}(y)=\int_{S^{n-2}}\int_{0}^{at}e^{-\frac{r^{2}}{2}}r^{n-2}drdu=\mathcal{H}^{n-2}(S^{n-2})\int_{0}^{at}e^{-\frac{r^{2}}{2}}r^{n-2}dr,
	\end{align*}
	and
	$$\frac{\int_{0}^{at}e^{-\frac{r^{2}}{2}}r^{n-2}dr}{t^{n-1}}\rightarrow\frac{a^{n-1}}{n-1}\quad\text{as}\quad t\rightarrow 0.$$
	Then we get the estimate \eqref{n-1 estimate}. There exists a ball $B_{a}^{n-1}$ such that $A\subset B_{a}^{n-1}$, therefore, for $t>0$,
	\begin{align*}
		0<t^{1-p}s(t)&=t^{1-p}\int_{H(t)}e^{-\frac{|y|^{2}}{2}}d\mathcal{H}^{n-1}(y)\\
		&=t^{1-p}\int_{(x,t)\in H(t)}e^{-\frac{|x|^{2}+t^{2}}{2}}d\mathcal{H}^{n-1}(x)\\
		&=t^{1-p}e^{-\frac{t^{2}}{2}}\int_{tA}e^{-\frac{|x|^{2}}{2}}d\mathcal{H}^{n-1}(x)\\
		&\le t^{1-p}e^{-\frac{t^{2}}{2}}\int_{B_{a}^{n-1}}e^{-\frac{|x|^{2}}{2}}d\mathcal{H}^{n-1}(x)\\
		&\sim t^{1-p}\cdot t^{n-1}=t^{n-p}\quad \text{as}\quad t\rightarrow0.
	\end{align*}
	If $p<n$,   we obtain $t^{1-p}s(t)\rightarrow 0$ as $t\rightarrow 0$. If $t \to \infty$, the convergence rate of $s(t)$ is determined by $e^{-\frac{t^2}{2}}$, and thus $t^{1-p}s(t)\rightarrow 0$. Then there exist positive numbers $t_{1},t_{2}>0$, with $t_{1}\neq t_{2}$, such that $t_{1}^{1-p}s(t_{1})=t_{2}^{1-p}s(t_{2})$.
	
	Define $K=H(t_{1})+C$ and $L=H(t_{2})+C$. Both $K$ and $L$ belong to the class $\mathcal{K}(C,v)\subset\mathcal{K}(C,\omega)$.  We then compute $$S_{p,\gamma^{n}}(K,v)=\bar{h}_{K}^{1-p}(v)S_{\gamma^{n}}(K,v)=\frac{1}{(\sqrt{2\pi})^{n}}t_{1}^{1-p}s(t_{1})$$
	and similarly,
	$$S_{p,\gamma^{n}}(L,v)=\bar{h}_{L}^{1-p}(v)S_{\gamma^{n}}(L,v)=\frac{1}{(\sqrt{2\pi})^{n}}t_{2}^{1-p}s(t_{2}).$$
Since	the measures	$S_{p,\gamma^{n}}(K,\cdot)$ and $S_{p,\gamma^{n}}(L,\cdot)$ are both supported on $v$, it follows that	$S_{p,\gamma^{n}}(K,\cdot)= S_{p,\gamma^{n}}(L,\cdot)$.
	 This completes the proof of the desired result.
\end{proof}

\end{document}